\documentclass[10pt,reqno]{amsart}
\usepackage{amsmath,amsthm,amssymb,mathrsfs,stmaryrd,color}
\usepackage[all]{xy}
\usepackage{url}
\usepackage{geometry}
\geometry{lmargin=1in,rmargin=1in}

\usepackage[utf8]{inputenc}
\usepackage[T1]{fontenc}

\usepackage{relsize}
\usepackage[bbgreekl]{mathbbol}
\usepackage{amsfonts}

\DeclareSymbolFontAlphabet{\mathbb}{AMSb}
\DeclareSymbolFontAlphabet{\mathbbl}{bbold}
\newcommand{\Prism}{\mathbbl{\Delta}}
%


\usepackage{enumitem}
\setlist[enumerate]{itemsep=2pt,parsep=2pt,before={\parskip=2pt}}

\usepackage[colorlinks=true,hyperindex, linkcolor=magenta, pagebackref=false, citecolor=cyan,pdfpagelabels]{hyperref}

\newcommand{\cosimp}[3]{\xymatrix@1{#1 \ar@<.4ex>[r] \ar@<-.4ex>[r] & {\ }#2 \ar@<0.8ex>[r] \ar[r] \ar@<-.8ex>[r] & {\ } #3 \ar@<1.2ex>[r] \ar@<.4ex>[r] \ar@<-.4ex>[r] \ar@<-1.2ex>[r] & \cdots }}

\newcommand{\colim}{\mathop{\mathrm{colim}}}

\newcommand{\adjunction}[4]{\xymatrix@1{#1{\ } \ar@<0.3ex>[r]^{ {\scriptstyle #2}} & {\ } #3 \ar@<0.3ex>[l]^{ {\scriptstyle #4}}}}


\begin{document}

\newtheorem{theorem}{Theorem}[section]
\newtheorem*{theorem*}{Theorem}
\newtheorem*{definition*}{Definition}
\newtheorem{proposition}[theorem]{Proposition}
\newtheorem{lemma}[theorem]{Lemma}
\newtheorem{corollary}[theorem]{Corollary}

\theoremstyle{definition}
\newtheorem{definition}[theorem]{Definition}
\newtheorem{question}[theorem]{Question}
\newtheorem{remark}[theorem]{Remark}
\newtheorem{warning}[theorem]{Warning}
\newtheorem{example}[theorem]{Example}
\newtheorem{notation}[theorem]{Notation}
\newtheorem{convention}[theorem]{Convention}
\newtheorem{construction}[theorem]{Construction}
\newtheorem{claim}[theorem]{Claim}
\newtheorem{assumption}[theorem]{Assumption}

\newcommand{\qc}{q-\mathrm{crys}}

\newcommand{\Shv}{\mathrm{Shv}}
\newcommand{\et}{{\acute{e}t}}
\newcommand{\crys}{\mathrm{crys}}
\renewcommand{\inf}{\mathrm{inf}}
\newcommand{\Hom}{\mathrm{Hom}}
\newcommand{\Sch}{\mathrm{Sch}}
\newcommand{\Spf}{\mathrm{Spf}}
\newcommand{\Spa}{\mathrm{Spa}}
\newcommand{\Spec}{\mathrm{Spec}}
\newcommand{\perf}{\mathrm{perf}}
\newcommand{\qsyn}{\mathrm{qsyn}}
\newcommand{\perfd}{\mathrm{perfd}}
\newcommand{\arc}{{\rm arc}}
\newcommand{\conj}{\mathrm{conj}}
\newcommand{\rad}{\mathrm{rad}}

\newcommand{\psh}{\mathrm{PShv}}
\newcommand{\scr}{\mathrm{sCAlg}}
\newcommand{\HT}{\mathrm{HT}}
\newcommand{\dR}{\mathrm{dR}}
\newcommand{\Syn}{\mathrm{Syn}}
\newcommand{\gr}{\mathrm{gr}}
\newcommand{\cN}{\mathcal{N}}
\newcommand{\Fil}{\mathrm{Fil}}

\setcounter{tocdepth}{1}

\title{Crystals and chern classes}
\author{Bhargav Bhatt}

\begin{abstract}
The goal of this paper is to study the Chern classes of coherent sheaves (and more generally perfect complexes) that admit crystal structures in the setting of crystalline cohomology and more generally relative prismatic cohomology. In the former theory, we show that the Chern classes vanish on the nose; in the latter theory, we show the classes are torsion with uniformly bounded exponents determined by suitable Bernoulli numbers. We also formulate some questions about syntomic Chern classes of such sheaves.
\end{abstract}

\maketitle

\tableofcontents

\section{Introduction}
\label{intro}

Given a smooth variety $X/\mathbf{C}$ and a coherent sheaf $E \in \mathrm{Coh}(X)$, the (Betti) Chern classes of $E$ are a sequence of invariants $c_i(E) \in H^{2i}(X, \mathbf{Z})$ for $i > 0$; these invariants play an important role in understanding the geometry of $E$ and $X$. If $E$ admits a flat connection (which forces $E$ to be a vector bundle), these classes are torsion or equivalently $0$ as elements of $H^{2i}(X, \mathbf{Z}) \otimes \mathbf{C} \simeq H^{2i}_{dR}(X/\mathbf{C})$; an algebraic proof is recalled in \S \ref{sec:char0}.  The goal of this paper is to study the analogous question in positive and mixed characteristic. 

For the rest of this paper, fix a prime number $p$.

\subsection{Positive characteristic}

Let $k$ be a perfect field of characteristic $p$ and let $X/k$ be a smooth variety. For a coherent sheaf on $X$, the notion of a flat connection is not nearly as restrictive as it is in characteristic $0$, e.g., the Frobenius pullback of {\em any} coherent sheaf (e.g., the $p$-th power of a line bundle) admits a natural flat connection. Instead, a good characteristic $p$ counterpart of the characteristic $0$ notion of a flat connection is provided by Grothendieck's notion of a crystal \cite{GrothendieckCrystalsDix}. We shall prove that coherent sheaves on $X$ admitting crystal structures have vanishing crystalline Chern classes, answering a question of Esnault:

\begin{theorem}[Crystalline Chern classes of crystals vanish, Theorem~\ref{IsoCrysChernVanish}]
\label{MainThmCrysIntro}
Let $k$ be a perfect field of characteristic $p$,  let $X/k$ be a smooth variety, and let $M \in \mathrm{Coh}(X)$ be a coherent sheaf on $X$. If $M$ lifts to a $p$-torsionfree crystal of coherent sheaves on the crystalline site $(X/W(k))_\crys$, then for all $i > 0$, we have
\[ c_i(M) = 0 \quad \text{in} \quad H^{2i}_\crys(X).\]
\end{theorem}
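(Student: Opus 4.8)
The plan is to transport the problem into de Rham cohomology via the crystalline–de Rham comparison, dispose of it rationally using the characteristic-zero argument, and then remove the remaining torsion using the Frobenius action on crystalline cohomology.

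Write $W=W(k)$. Smooth-locally on $X$ there is a smooth $p$-adic formal $W$-scheme $\widetilde X$ lifting $X$; globally one works with a smooth simplicial lift and the de Rham complex of the associated cosimplicial formal scheme, which computes $R\Gamma_{\crys}(X/W)$ and carries the crystalline Chern classes, so for clarity I will pretend a global lift $\widetilde X$ exists. Evaluating the given crystal on the PD-thickenings $X\hookrightarrow \widetilde X\bmod p^n$ produces a coherent sheaf $\widetilde E$ on $\widetilde X$ equipped with an integrable, topologically quasi-nilpotent connection $\nabla\colon \widetilde E\to \widetilde E\otimes\Omega^1_{\widetilde X/W}$; since the crystal is $p$-torsionfree, $\widetilde E$ is $W$-flat and $\widetilde E\bmod p\cong M$. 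Thus, while a general coherent sheaf on $X$ need not lift to $\widetilde X$, the crystal structure supplies exactly such a lift — together with a connection. Under the comparison isomorphism $H^{2i}_{\crys}(X)\cong H^{2i}_{\dR}(\widetilde X/W)$, and by the compatibility of crystalline and de Rham Chern classes, this gives $c_i(M)=c_i^{\dR}(\widetilde E)$. (Alternatively one argues directly on the crystalline topos, where $\mathcal E$ is an $\mathcal O_{X/W}$-module carrying a canonical flat connection, hence has vanishing Atiyah class.)

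After inverting $p$, the connection $\nabla[1/p]$ is a flat connection over the characteristic-zero field $W[1/p]$, which forces $\widetilde E[1/p]$ to be locally free, so the algebraic argument recalled in \S\ref{sec:char0} (which applies over any field of characteristic zero) gives $c_i^{\dR}(\widetilde E)=0$ in $H^{2i}_{\dR}(\widetilde X/W)\otimes\mathbf{Q}=H^{2i}_{\crys}(X)\otimes\mathbf{Q}$. Hence $c_i(M)$ is a $p$-power torsion class, and the whole content of the theorem is now that this torsion class vanishes. This last step is the main obstacle. The Frobenius enters: because $k$ is perfect and $X$ is smooth, the absolute Frobenius $F_X\colon X\to X$ is flat, so $F_X^*M$ is again a coherent sheaf underlying a crystal, and since the crystalline Frobenius is pullback along $F_X$ while crystalline Chern classes of coherent sheaves are $\mathbf{Z}$-combinations of cycle classes — which $F_X$ scales by $p^i$ in codimension $i$ — we get $\varphi\bigl(c_i(M)\bigr)=c_i(F_X^*M)=p^i\,c_i(M)$. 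However, "torsion" together with "$\varphi(x)=p^i x$" is not enough to conclude $x=0$: for example $c_1$ of the canonical bundle of an Enriques surface in characteristic $2$ is a nonzero $2$-torsion class satisfying both constraints — consistent with the theorem only because that line bundle admits no crystal structure. So one must extract something sharper from the crystal structure. I expect the decisive extra input to be a bound on the order of the $p$-torsion in $c_i(M)$ coming from the vanishing of the Atiyah class of $\widetilde E$ — the torsion being annihilated by the universal denominators relating the Chern character to the Chern classes, which are exactly the Bernoulli-type integers that (per the abstract) govern the relative prismatic statement — so that, combined with $\varphi(c_i(M))=p^i c_i(M)$ and the $F$-crystal structure of $H^{2i}_{\crys}(X)$, there is no room left for a nonzero class. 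Making this combination precise is, I anticipate, the technical heart of the proof.
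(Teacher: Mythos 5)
Your reduction to ``the class is $p$-power torsion, now kill the torsion'' does not reach the theorem, and you say so yourself: the final step is left as a hope. That missing step is not a technicality --- it is the entire content of the result. The paper emphasizes that the crystalline statement is \emph{integral}, and its proof never passes through rational vanishing plus a torsion analysis at all. Moreover, the specific mechanism you propose for the torsion is unlikely to close the gap: the Bernoulli-type denominators $w(\mathbf{Z}_p,i)$ have nothing to do with the crystalline theorem (in the paper they arise only in the prismatic setting, via inertia invariants of Tate twists $H^0(I,\mathbf{Z}/p^n(i))$ in Galois cohomology, not via Chern character denominators or the Atiyah class), and even granting a bound ``$w(\mathbf{Z}_p,i)\,c_i(M)=0$'' together with $\varphi(c_i(M))=p^i c_i(M)$, your own Enriques example satisfies both constraints with a nonzero class, so no formal combination of these two facts with the $F$-crystal structure of $H^{2i}_\crys(X)$ can force vanishing; some genuinely integral use of the crystal structure is required, and your proposal does not identify one. (There are also smaller soft spots --- the global simplicial lift, local freeness of $\widetilde{E}[1/p]$ on the rigid generic fibre, and the claim $\varphi(c_i(M))=p^ic_i(M)$ via cycle classes for a coherent sheaf --- but these are secondary to the main gap.)

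For comparison, the paper's argument is integral from the start and is a direct analogue of the characteristic-zero factorization argument of \S\ref{sec:char0}: a crystal structure on $E$ is exactly a factorization of the crystallized classifying map $X^\crys\to\mathrm{Perf}(-/k)^\crys$ through $\mathrm{Perf}(-/W)^\sharp$ (Remark~\ref{CrystalStructureFunctor}), and the universal Chern class map is null on $\mathrm{Perf}(-/W)^\sharp$ because, after passing to $K$-theory and applying the Quillen--McDuff--Segal group completion theorem (Corollary~\ref{HomologyK}), its $\mathcal{O}_\crys$-cohomology is the $p$-completed coherent cohomology of $\mathrm{Vect}_\infty(-/W)$, which vanishes in positive degrees by Theorem~\ref{DeligneCalc} (Lemma~\ref{KeyVanishingChernCrys}). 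The passage from a $p$-torsionfree coherent crystal to a crystal of perfect complexes (Corollary~\ref{IsoCrysChernVanish}) is then handled by a flatness argument on PD-envelopes, not by lifting $M$ to a formal scheme. So the decisive input is the vanishing of $H^{>0}(\mathrm{BGL}_n/W,\mathcal{O})$ applied on the crystalline side, with $K$-theory mediating between vector bundles and perfect complexes --- no Frobenius eigenvalue argument and no torsion bookkeeping occur.
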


In fact, we deduce this result from a stronger statement concerning crystals of perfect complexes (Theorem~\ref{CrysChernVanish}); the passage to the more general statement is essential to our argument. In the important case where $M$ is a vector bundle, Theorem~\ref{MainThmCrysIntro} was proven by Esnault--Shiho \cite{EsnaultShihoChern} (see also \cite{EsnaultShihoConv} for convergent isocrystals). Unlike characteristic $0$, a crystal structure on a coherent sheaf does not force it to be a vector bundle, so Theorem~\ref{MainThmCrysIntro} does not reduce to the vector bundle case. In fact, such non-(locally free) crystals arise naturally in geometry, e.g., there exist proper smooth maps $f:Y \to X$ of smooth varieties such that coherent sheaf underlying the $p$-torsionfree coherent crystal $\left(R^i f_* \mathcal{O}_{(Y/W(k))_\crys}\right)/\text{torsion}$ is not locally free for some $i$\footnote{An explicit such family of surfaces (and $i=3$) was found by Shizhang Li and Sasha Petrov; it relies on degenerating an isomorphism between finite flat group schemes to the $0$ map, and mimics the behaviour seen in \cite[\S 6]{LiLiuTorsion}.}; see also \cite{EsnaultShihoEx} for an explicit example of an $M$ as in Theorem~\ref{MainThmCrysIntro} which is not locally free.

Despite the above discussion, our proof of Theorem~\ref{MainThmCrysIntro} involves vector bundles critically: we use the Quillen--McDuff--Segal group completion theorem to describe the cohomology of the presheaf $K(\mathrm{Perf}(-))$ of $K$-theory of perfect complexes in terms of the cohomology of the stack $\mathrm{BGL}_\infty$ of vector bundles in a locally ringed topos (Corollary~\ref{HomologyK}); this is applied to both the Zariski and crystalline topoi to obtain the theorem.

It is curious that Theorem~\ref{MainThmCrysIntro} is an integral statement, i.e., we do not need to invert $p$ to see the vanishing. In contrast, the corresponding classical result over $\mathbf{C}$ only holds rationally (Example~\ref{Torsc1Ex}). The two phenomena can at least be related by working in mixed characteristic, as we explain in \S \ref{ss:MixedCharIntro}. Before turning to that case, however, let us highlight a question concerning a potential refinement of Theorem~\ref{MainThmCrysIntro}:

\begin{question}[Do syntomic Chern classes of crystals vanish?]
Fix $k$, $X$, and $M$ as in Theorem~\ref{MainThmCrysIntro} with $k=\overline{k}$ and $X$ proper. Are the syntomic Chern classes $c_i^\Syn(M) \in H^{2i}_\Syn(X, \mathbf{Z}_p(i))$ also $0$ for $i > 0$? This question is motivated by analogy with Reznikov's theorem \cite{ReznikovBloch} in complex geometry and holds true for $M$'s coming from geometry; we refer to Remark~\ref{SynVanCharpRemark} for a more thorough discussion. For now, we simply note that the $i=1$ case (where an affirmative answer is known) admits a classical formulation:
\begin{corollary}[Petrov--Vologodsky,  Corollary~\ref{LineBunCrys}]
Fix an algebraically closed field $k$ of characteristic $p$, a smooth proper variety $X/k$, and a line bundle $L \in \mathrm{Pic}(X)$. If $L$ lifts to a $p$-torsionfree crystal on the crystalline site $(X/W(k))_\crys$, then $L$ admits a compatible system of $p$-power roots in $\mathrm{Pic}(X)$. 
\end{corollary}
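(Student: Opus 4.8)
The plan is to show that $L$ becomes infinitely $p$-divisible in $\mathrm{Pic}(X)$ by iterating Cartier's descent theorem, the iteration being powered by the crystal structure over the truncations $W_n(k)$ of $W(k)$.

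First I would reduce to a divisibility statement. Since $X$ is smooth and proper over the algebraically closed field $k$, there is an exact sequence $0 \to \mathrm{Pic}^0(X) \to \mathrm{Pic}(X) \to \mathrm{NS}(X) \to 0$ with $\mathrm{Pic}^0(X)$ the group of $k$-points of an abelian variety — hence $p$-divisible with finite $p$-torsion — and $\mathrm{NS}(X)$ finitely generated; in particular $\mathrm{Pic}(X)[p]$ is finite. It follows that $P := \bigcap_{n\ge 1} p^n\mathrm{Pic}(X)$ is $p$-divisible: given $x \in P$, the coset $\{\,y \in \mathrm{Pic}(X) : py = x\,\}$ is a nonempty finite set meeting $p^n\mathrm{Pic}(X)$ for each $n$ (as $x \in p^{n+1}\mathrm{Pic}(X)$), hence — being a decreasing chain of nonempty finite sets — meeting $P$. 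Therefore $\varprojlim(P, \times p) \to P$ is surjective, and a line bundle admits a compatible system of $p$-power roots in $\mathrm{Pic}(X)$ precisely when it lies in $P$, i.e. when it is $p^n$-divisible for every $n$. So it is enough to show $L \in p^n\mathrm{Pic}(X)$ for all $n \ge 1$.

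Fix $n$. Reducing the given $p$-torsionfree crystal modulo $p^n$ gives a crystal on $(X/W_n(k))_\crys$, flat over $W_n(k)$, with underlying line bundle $L$. The case $n=1$ is Cartier's theorem: a rank-one crystal on $(X/k)_\crys$ has nilpotent $p$-curvature, and a nilpotent endomorphism of a line bundle over a reduced scheme vanishes, so the $p$-curvature is zero; Cartier descent then exhibits the underlying bundle as $F^*$ of a line bundle on $X^{(p)}$, where $F\colon X \to X^{(p)}$ is the relative Frobenius. As $k$ is perfect, $X^{(p)} \cong X$ and the induced operation on $\mathrm{Pic}$ is $M \mapsto M^{\otimes p}$, so $L \cong M_1^{\otimes p}$. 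For general $n$, the crystal structure over $W_n(k)$ is exactly the additional datum needed to iterate this descent $n$ times — the torsionfreeness guaranteeing the process does not terminate early — producing $L \cong M_n^{\otimes p^n}$, hence $L \in p^n\mathrm{Pic}(X)$; combined with the reduction above, this finishes the proof. (A bounded loss in the exponent at this stage would do no harm, since $n \to \infty$.)

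The main obstacle is making the iteration precise: one must show that a flat crystal on $(X/W_n(k))_\crys$ with underlying bundle $L$ yields, compatibly, line bundles $M_1, \dots, M_n$ with $F^*M_{i+1}\cong M_i$ and $F^*M_1 \cong L$ — equivalently, that the Cartier descent of the mod-$p$ reduction inherits, from the higher truncations, the structure needed to be descended again. For $n=1$ this is classical; in general it is the content of the dictionary between crystals over $W_n(k)$ and $n$-fold iterated Frobenius descent (the periodic Higgs--de Rham/$F$-gauge picture of Ogus--Vologodsky, Lan--Sheng--Zuo, and others), the torsionfreeness hypothesis being what licenses the full $n$ steps. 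An alternative, more cohomological route would run through the syntomic refinement of Theorem~\ref{MainThmCrysIntro}: the crystal structure forces the vanishing of $c_1^\Syn(L) \in H^2_\Syn(X,\mathbf{Z}_p(1))$ (the $i=1$ case, cf.\ Remark~\ref{SynVanCharpRemark}), which via the comparison of weight-one syntomic cohomology with $R\varprojlim_n \mu_{p^n}$ and the Kummer sequence says exactly that the mod-$p^n$ flat first Chern class of $L$ vanishes for all $n$, i.e. $L \in \bigcap_n p^n\mathrm{Pic}(X)$; this also makes clear that the mere vanishing of the crystalline Chern class $c_1^\crys(L)$ is not enough — one genuinely uses that $L$ itself, not just its cohomology class, lives on the crystalline site.
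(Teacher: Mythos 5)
Your main route has a genuine gap at exactly the step you flag as ``the main obstacle'': the claim that a flat line-bundle crystal on $(X/W_n(k))_\crys$ forces $L\in p^n\mathrm{Pic}(X)$ via $n$-fold iterated Cartier descent. Note that your argument for this step uses neither the properness of $X$ nor the algebraic closedness of $k$ (those enter only your preliminary reduction), so if it were correct it would apply verbatim over a finite field. The paper's own example at the end of Remark~\ref{SynVanCharpRemark} rules this out: for a suitable elliptic curve $E$ over a finite field with a rational point $L$ of order $p^2$ (arrange $E(\mathbf{F}_q)[p^\infty]\cong\mathbf{Z}/p^2$), the bundle $M=L^{\otimes p}$ carries a $p$-torsionfree crystal structure relative to all of $W$, yet $M$ has order $p$ in $\mathrm{Pic}(E)$ and is not in $p^2\mathrm{Pic}(E)$; concretely every $p$-th root of $M$ has order $p^2$ and hence is not itself a $p$-th power, so already the second descent fails. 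What is true mod $p$ (quasi-nilpotent $p$-curvature on a line bundle is zero, whence Cartier descent and $L\in p\,\mathrm{Pic}(X)$) does not iterate: a crystal structure modulo $p^{n}$ does not endow the Cartier descendant with a structure that can be descended again. The dictionaries you invoke (Ogus--Vologodsky, Lan--Sheng--Zuo, $F$-gauges) require a Frobenius structure --- an $F$-crystal/Fontaine-module/periodicity datum --- or a convergence (infinite $F$-divisibility) hypothesis, precisely the extra data discussed in Remark~\ref{SynVanCharpRemark}(2) and absent here; ``$p$-torsionfree'' does not substitute for it.

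Your alternative cohomological route is circular, and its closing claim inverts the actual logic. You take as input that the crystal structure forces $c_1^\Syn(L)=0$; but that is exactly the weight-one case of question $(\ast)$ in Remark~\ref{SynVanCharpRemark}, whose affirmative answer over $\overline{k}$ \emph{is} the corollary being proved (the unconditional syntomic vanishings in Remark~\ref{SynVanCharpRemark}(2) all require an $F$-structure). The paper argues in the opposite direction: Theorem~\ref{CrysChernVanish} gives $c_1^{\crys}(L)=0$; Illusie's injectivity $H^2_\Syn(X,\mathbf{Z}_p(1))\hookrightarrow H^2_\crys(X)$ (Proposition~\ref{IllInj}, where $k=\overline{k}$ and properness are used) together with the injectivity of $\mathrm{Pic}(X)^{\wedge}\to H^2_\Syn(X,\mathbf{Z}_p(1))$ then forces $L=0$ in $\mathrm{Pic}(X)^{\wedge}$, and a general lemma about derived $p$-completions upgrades $p^n$-divisibility for all $n$ to a compatible system of roots. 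So, contrary to your last sentence, the mere vanishing of $c_1^\crys(L)$ --- combined with these global injectivity statements for proper $X$ over $\overline{k}$ --- is exactly what is used; the crystal structure on $L$ itself enters only through Theorem~\ref{CrysChernVanish}. Your first-paragraph reduction (finiteness of $\mathrm{Pic}(X)[p]$ making $\bigcap_n p^n\mathrm{Pic}(X)$ hit by $\varprojlim$) is fine and could serve in place of that last lemma, but the core divisibility statement must come from the cohomological argument, not from iterated Cartier descent.
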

\end{question}

\subsection{Mixed characteristic}
\label{ss:MixedCharIntro}

Prismatic cohomology \cite{BMS1,BMS2,BhattScholzePrisms} is a generalization of crystalline cohomology for algebraic varieties in mixed characteristic. The essential novel feature distinguishing the prismatic theory from the crystalline theory is the former's close  relationship to the \'etale topology of the generic fibre. Nonetheless, the formal setup is quite similar to the crystalline theory. In particular, Grothendieck's notion of a crystal has a counterpart in the prismatic context given by $\varphi$-twisted prismatic crystals\footnote{The $\varphi$-twist appears naturally in relating prismatic and crystalline cohomology; it corresponds to the fact that prismatic cohomology over crystalline prisms provides a Frobenius descent of crystalline cohomology. We work with this notion in the introduction to emphasize the parallel with the crystalline result in Theorem~\ref{MainThmCrysIntro}. But the more fundamental concept is that of a prismatic crystal, and indeed we prove the result in that generality (Theorem~\ref{ChernPrismCrysTors}).} (see \S \ref{TwistedPrismaticCrystals}). The prismatic counterpart of Theorem~\ref{MainThmCrysIntro} is then the following statement:

\begin{theorem}[Prismatic Chern classes of crystals are torsion, Corollary~\ref{TwistedPrismaticCrystalsCor}]
\label{MainThmPrismIntro}
Let $(A,I)$ be a bounded prism, let $X$ be a smooth $p$-adic formal $A/I$-scheme, and let $M \in \mathrm{Perf}(X)$ be a perfect complex on $X$. If $M$ lifts to a $\varphi$-twisted prismatic crystal on prismatic site $(X/A)_\Prism$, then for all $i > 0$ and for all\footnote{The fact that we prove a vanishing result modulo all powers of $p$ but not integrally is a technical problem caused by the non-separatedness of prismatic cohomology groups and our methods; this can be ignored most cases of interest (Remark~\ref{BoundedTors}). In fact, we expect the vanishing to be true integrally in general.} $n \geq 0$, we have
\[ w(\mathbf{Z}_p,i) c_i(E) = 0 \quad  \text{in} \quad H^{2i}_\Prism(X/A; \mathbf{Z}/p^n)\{i\};\]
here $w(\mathbf{Z}_p,i)$ is the $p$-part of the denominator of $B_i/i$, where $B_i$ is the $i$-th Bernoulli number (this integer divides $2ip$; see Definition~\ref{defwint} for the exact formula). 
\end{theorem}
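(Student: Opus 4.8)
The plan is to reproduce, in the prismatic topos $(X/A)_\Prism$, the strategy used for the crystalline case (Theorem~\ref{CrysChernVanish}), and then to pay — in the last step — the cost of the Breuil--Kisin twists, which is exactly the Bernoulli factor $w(\mathbf{Z}_p,i)$. I would first dispose of the $\varphi$-twist: Frobenius pullback relates $\varphi$-twisted prismatic crystals of perfect complexes to honest prismatic crystals, changing the underlying object of $\mathrm{Perf}(X)$ only by a Frobenius twist and shifting the Chern classes compatibly with the twist $\{i\}$, so it is enough to prove (the content of Theorem~\ref{ChernPrismCrysTors}): if the perfect complex $E:=M$ lifts to an untwisted prismatic crystal $\widetilde{E}$ on $(X/A)_\Prism$, then $w(\mathbf{Z}_p,i)\,c_i(E)=0$ in $H^{2i}_\Prism(X/A;\mathbf{Z}/p^n)\{i\}$ for all $i>0$ and $n\geq 0$. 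Since $\widetilde{E}$ is a sheaf on the prismatic site and $R\Gamma((X/A)_\Prism,-)=R\Gamma_\Prism(X/A)$, the class $c_i(E)$ is just the $i$-th prismatic Chern class of the $\mathcal{O}_\Prism$-perfect complex $\widetilde{E}$, formed via the tautological $\mathrm{dlog}$/cycle-class formalism on the prismatization and living in $H^{2i}_\Prism(X/A;\mathbf{Z}/p^n)\{i\}$; the task is to bound its order.

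Because a prismatic crystal of perfect complexes need not be (even locally) a vector bundle, one cannot work directly with $\mathrm{BGL}_n$; instead I would apply Corollary~\ref{HomologyK}, which expresses the cohomology of the $K$-theory presheaf $K(\mathrm{Perf}(-))$ on the prismatic site in terms of the cohomology of the classifying stack $\mathrm{BGL}_\infty$ of vector bundles in the ringed topos $((X/A)_\Prism,\mathcal{O}_\Prism)$. Thus $c_i(\widetilde{E})$ is pulled back from a universal class $c_i^{\mathrm{univ}}$ in the weight-$i$, $p^n$-torsion cohomology of $\mathrm{BGL}_\infty$ over this topos, and it suffices to annihilate $w(\mathbf{Z}_p,i)\,c_i^{\mathrm{univ}}$; by the splitting principle, expressing $c_i^{\mathrm{univ}}$ as the $i$-th elementary symmetric function in the first Chern classes of tautological line bundles over copies of $\mathrm{B}\mathbb{G}_m$, this reduces to bounding $c_1$ of a prismatic-crystal line bundle, together with its degree-$i$ products.

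For the torsion bound I would combine two features of the prismatic structure. On one side, the reduction of $c_i(E)$ modulo $I$ is controlled by the Hodge--Tate comparison and, because $\widetilde{E}$ is a crystal (so that $E$ behaves like a sheaf with flat connection over $A/I$), it vanishes — this is the coherent/crystalline shadow of the theorem, itself established along the way via Corollary~\ref{HomologyK} applied to the Zariski topos, just as \S\ref{sec:char0} treats the classical case. Hence $c_i(E)$ lies in $I\cdot H^{2i}_\Prism(X/A;\mathbf{Z}/p^n)\{i\}$, and iterating the Frobenius on this twisted group — which carries $I$ into $(p)$ modulo higher powers of $I$ — bootstraps to arbitrary $I$-divisibility. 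On the other side, the crystal structure forces $c_i(E)$ to be fixed by the Frobenius $\varphi$ of prismatic cohomology. A class that is simultaneously $\varphi$-fixed and arbitrarily $I$-divisible is killed by the order of the group of $\varphi$-fixed points of the weight-$i$ coefficient sheaf; computing this over the base prism, and using that it is a prism-independent (``motivic'') quantity, identifies it with $\#\big((\mathbf{Z}/p^n)(i)\big)^{\varphi=1}$, which for $n$ large is $w(\mathbf{Z}_p,i)$ — the $p$-part of the denominator of $B_i/i$, equal to $p^{v_p(i)+1}$ when $(p-1)\mid i$ and $1$ otherwise for odd $p$ (with the usual modification at $p=2$). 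Reassembling along Corollary~\ref{HomologyK} and the splitting principle would then yield $w(\mathbf{Z}_p,i)\,c_i(E)=0$ modulo $p^n$.

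The main obstacle is this last step — making ``$\varphi$-fixed and $I$-divisible $\Rightarrow$ killed by $w(\mathbf{Z}_p,i)$'' rigorous at the level of the prismatic cohomology groups themselves, which are not $I$-adically separated (nor derived complete in a way compatible with taking $\varphi$-invariants). That non-separatedness is precisely why the method produces a statement modulo each $p^n$ rather than an integral one (compare the footnote to the statement), and it forces careful bookkeeping of the interaction of the Nygaard filtration $\cN^{\bullet}$, the Frobenius $\varphi$, and the twist $\{i\}$ — equivalently, a careful analysis of the syntomic cohomology $H^{2i}_\Syn(X,\mathbf{Z}/p^n(i))$ through which a crystal's Chern class is forced to factor — all to be done in tandem with the homological bookkeeping of Corollary~\ref{HomologyK}. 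It is also here that the mixed-characteristic phenomenon genuinely departs from the crystalline one, where the invertibility of Frobenius on the base $W(k)$ lets one upgrade ``$w(\mathbf{Z}_p,i)$-torsion'' to ``zero''.
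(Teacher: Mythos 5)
There is a genuine gap, and it sits exactly at the step you flag as the ``main obstacle'': the implication ``$\varphi$-fixed and arbitrarily $I$-divisible $\Rightarrow$ killed by $\#\big((\mathbf{Z}/p^n)(i)\big)^{\varphi=1}$'' is not a valid deduction, and the inputs feeding into it are themselves unjustified. First, the claimed vanishing of $c_i(E)$ modulo $I$ (i.e.\ in Hodge--Tate cohomology) for a crystal is not a ``shadow'' of the crystalline theorem and is not available integrally; at best it holds up to the same torsion you are trying to bound, so the argument is circular at its starting point. Second, ``iterating Frobenius to bootstrap arbitrary $I$-divisibility'' is not a well-defined operation: $\varphi$ is $\phi_A$-semilinear and moves you between differently twisted groups, and it does not convert $I\cdot H$ into $I^m\cdot H$. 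Third, and most importantly, even granting $\varphi$-fixedness and $I^\infty$-divisibility, the $\varphi$-fixed elements of $H^{2i}_\Prism(X/A;\mathbf{Z}/p^n)\{i\}$ (or of its $I$-localization) form a group vastly larger than the fixed points of the coefficient sheaf over the base prism: by the \'etale comparison it contains the image of $H^{2i}_{et}(X_C,\mathbf{Z}/p^n(i))$, which has elements of arbitrary $p$-power order. Note also that being $\varphi$-fixed is automatic for the Chern classes of \emph{any} vector bundle or Hodge--Tate crystal (the universal classes on $\mathrm{BGL}_n$ are $\varphi$-invariant), so this step extracts no leverage from the crystal hypothesis, and in your setup over a general prism $(A,I)$ there is no Galois action in sight, so the arithmetic quantity $w(\mathbf{Z}_p,i)$ has no mechanism by which to appear.

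What the crystal hypothesis actually buys in the paper is a factorization of the classifying map $(X/A)^\Prism\to(\mathrm{Vect}_r(-/\overline{A})/A)^\Prism$ through the $\sharp$-construction $\mathrm{Vect}_r(-/A)^\sharp$, whose $\mathcal{O}$-cohomology is that of the arithmetic jet space $J\mathrm{BGL}_r$ (Proposition~\ref{prismatization2}); the crucial point is that $J\mathrm{BGL}_r$ is defined over $\mathbf{Z}_p$, so after base change to the Fontaine prism $A_{\inf}$ the image of the universal class is Galois-invariant. The factor $w(\mathbf{Z}_p,i)$ then arises as the order of the \emph{inertia} invariants of $\mathbf{Z}/p^n(i)$ (Lemma~\ref{InertiaInvts}), applied after showing that the Frobenius-fixed points of the $d$-localized base change form an unramified Galois module (Corollary~\ref{UnramifiedCrit}, via the classification of injective Frobenius modules and Riemann--Hilbert over $\mathbf{F}_p$), and the case of a general bounded prism follows by a flat-descent argument (Theorem~\ref{torsionboundjet}); none of these ingredients is replaced by anything in your sketch. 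Two secondary problems: the splitting-principle reduction to $c_1$ does not apply, since the tautological line bundles on a flag bundle carry no crystal structure (and a bound coming from $c_1$'s would anyway produce the wrong, smaller constant for $i\geq 2$); and the $\varphi$-twist is not disposed of by Frobenius pullback of $M$ --- the paper instead uses the diagram \eqref{PrismCrysTwist} to show that the associated Hodge--Tate crystal $\pi^*M$ lifts to a prismatic crystal, reducing to Theorem~\ref{ChernPrismCrysTors}, which is a statement about Hodge--Tate crystals rather than about perfect complexes on $X$ lifting to untwisted prismatic crystals.
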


In fact, we deduce this result from a stronger statement where $M$ can be any Hodge--Tate crystal of perfect complexes on $X$ (Theorem~\ref{ChernPrismCrysTors}). The factors $w(\mathbf{Z}_p,i)$ appearing in Theorem~\ref{MainThmPrismIntro}, which arise for us naturally via Galois cohomology, are absent in Theorem~\ref{MainThmCrysIntro}, making the former result {\em prime facie} weaker than the latter. However, these factors are not just artifacts of the proof but in fact are forced by nature: the prismatic Chern class specializes to the \'etale Chern class upon taking generic fibres, and there are many examples of vector bundles with flat connection where the latter is torsion but nonzero, so the former can at best be torsion.  The following example captures this phenomenon (see Example~\ref{Chernexnonzero} for more details):

\begin{example}[A torsion Chern class for a prismatic crystal]
\label{ex:TorsionChernIntro}
Let $p=2$. Fix a complete and algebraically closed extension $C/\mathbf{Q}_2$, and let $(A,I)$ be the Fontaine prism $(A_{\inf}(\mathcal{O}_C), \ker(\theta))$. Let $Y/\mathcal{O}_C$ be a K3 surface admitting a fixed point free involution, and let $f:Y \to X:=Y/(\mathbf{Z}/2)$ be the corresponding $\mathbf{Z}/2$-torsor, so $X$ is an Enriques surface; such examples exist by work of Lang and Ogus, see \cite[Theorems 1.3 \& 1.4]{LangEnriques1} and also \cite[\S 2.1]{BMS1}. Then $L = f_* \mathcal{O}_Y/\mathcal{O}_X  \in \mathrm{Pic}(X)$ lifts to a $\varphi$-twisted prismatic crystal on $(X/A)_\Prism$ as $f$ is finite \'etale, whence  $c_1(L) \in H^2_\Prism(X/A)\{1\}$ is a $2$-torsion element by Theorem~\ref{MainThmPrismIntro} (since $w(\mathbf{Z}_2,1) = 2$). However, this class is non-trivial: in fact, its image in $H^2_{et}(X_C, \mathbf{Z}_p(1)) \otimes_{\mathbf{Z}_p} A$ under the \'etale specialization to the generic fibre is simply $c_1^{et}(L_C) \otimes 1$, which nonzero by a standard  calculation (Example~\ref{Torsc1Ex}).
\end{example}

\begin{remark}
Example~\ref{ex:TorsionChernIntro} also illustrates a consequence of Theorem~\ref{MainThmPrismIntro}: if a vector bundle $V$ with flat connection on $X_C$ (notation as Example~\ref{ex:TorsionChernIntro}) lifts to a $\varphi$-twisted prismatic crystal on $X$, then the torsion order of $c_i(V) \in H^{2i}(X_C, \mathbf{Z}_p(i))$ is bounded by $w(\mathbf{Z}_p,i)$; this gives a non-trivial obstruction for the existence of such prismatic lifts (Example~\ref{NoPrismaticLift}). The fact that this obstruction vanishes for bundles of geometric origin was already known to Grothendieck; see Remark~\ref{RelatedTorsion}.
\end{remark}

We end by pointing to Remark~\ref{SynMixedQuestion}, which asks for a version of Theorem~\ref{MainThmPrismIntro} in syntomic cohomology when $M$ admits a lift to a prismatic $F$-gauge on $X$ (or, ambitiously, merely to an absolute prismatic crystal). 

\subsection*{Acknowledgements}
We are most grateful to Jacob Lurie for many discussions: he was a collaborator at all stages of this project, but declined to sign the paper as a coauthor. We are also grateful to H\'el\`ene Esnault for a number of discussions and encouragement. In fact, Theorem~\ref{MainThmCrysIntro} was proven directly in response to a question she posed to the author at the 2015 Algebraic Geometry mega-conference in Utah; the generalization to the prismatic case in Theorem~\ref{MainThmPrismIntro} partially inspired the discovery of  the prismatic logarithm, which then sidetracked us into a different project \cite{BhattLurieAPC,BhattLuriePrismatization},  explaining (somewhat) the long delay in the appearance of this paper. We are also grateful to Esnault for many comments, references and suggestions on an earlier version of the note. We would also like to thank Johan de Jong, Shizhang Li, Sasha Petrov, Gleb Terentiuk, Vadim Vologodsky and Bogdan Zavyalov for many helpful discussions.

The author was partially supported by the NSF (\#1801689, \#1952399, and \#1840234), the Packard Foundation, and the Simons Foundation.

\begin{notation}
Write $\mathrm{Vect}(-)$ and $\mathrm{Perf}(-)$ for the stack of vector bundles and perfect complexes respectively on the category of commutative rings; we regard these stacks as valued in groupoids and $\infty$-groupoids respectively. For a commutative ring $R$, let $\mathrm{Vect}(-/R)$ and $\mathrm{Perf}(-/R)$ be their restriction to $R$-algebras. Note that $\mathrm{Vect}(-/R) = \sqcup_n \mathrm{BGL}_{n}(-/R)$ as sheaves. We also write $\mathrm{Vect}_{\infty}(-/R) := \colim_n \mathrm{Vect}_n(-/R)$, where the transition maps are given by adding a rank $1$ free summand.

The de Rham cohomology of a smooth $k$-stack $X$ is defined via smooth descent from the case of schemes or algebraic spaces; thus, if $U^\bullet \to X$ is the Cech nerve of a smooth surjection $U^0 \to X$ with $U^0$ an algebraic space, we have $R\Gamma_{dR}(X) \simeq \lim R\Gamma_{dR}(U^\bullet)$ via pullback; similarly for Hodge cohomology $R\Gamma(X,\Omega^*)$. (These are also the definitions employed in \cite{TotaroHodge}; see also \cite[\S 2]{ABMHKR} for more details.)

 \end{notation}

\section{de Rham cohomology of $\mathrm{BGL}_n$}

In this section, we recall a fundamental calculation of the de Rham and Hodge cohomology of the classifying stack of vector bundles; this will be the ultimate source of our vanishing theorem in the crystalline case.

\begin{theorem}[de Rham cohomology of the stack of vector bundles]
\label{DeligneCalc}
Let $k$ be a ring. Fix an integer $n > 0$, and let $\mathrm{Vect}_n(-/k)$ be the classifying stack of vector bundles on $k$-schemes.
\begin{enumerate}
\item We have $H^i(\mathrm{Vect}_n(-/k), \Omega^j) = 0$ for $i \neq j$. In particular, the Hodge-to-de Rham spectral sequence for the de Rham cohomology (relative to $k$) of $\mathrm{Vect}_n(-/k)$ degenerates.

\item There is a natural isomorphism
\[ \eta:k[c_1,...,c_n] \to H^*_{dR}(\mathrm{Vect}_n(-/k))\]
of graded rings, where $\deg(c_i) = 2i$. Moreover, $\eta(c_i) \in \mathrm{Fil}^i_H H^{2i}_{dR}(\mathrm{Vect}_n(-/k))$.

\item The isomorphism in (2) is compatible with restriction along the map $f:\mathrm{Vect}_n(-/k) \to \mathrm{Vect}_n(-/k)$ given by adding a free summand, i.e., $f^*(c_i) = c_i$ for $i \leq n$ and $f^*(c_{n+1}) = 0$. If we set $\mathrm{Vect}_\infty(-/k) = \colim_n \mathrm{Vect}_n(-/k)$ to be the colimit along these map, then  
\[ H^*_{dR}(\mathrm{Vect}_\infty(-/k)) \simeq k[c_1,c_2,....]\]
is a graded polynomial ring in infinitely many generators with $\mathrm{deg}(c_i) = 2i$.
\end{enumerate}
\end{theorem}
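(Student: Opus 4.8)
The plan is to reduce the de Rham computation to a more elementary statement that can be handled by Čech/simplicial descent along an explicit smooth cover of $\mathrm{Vect}_n(-/k) = \mathrm{BGL}_n(-/k)$. Concretely, the standard presentation of $\mathrm{BGL}_n$ has Čech nerve whose $m$-th term is $\mathrm{GL}_n^{\times m}$ (with the usual bar differentials), so $R\Gamma_{dR}(\mathrm{BGL}_n(-/k))$ is computed by the cosimplicial object $m \mapsto R\Gamma_{dR}(\mathrm{GL}_n^{\times m}/k)$, and similarly for Hodge cohomology with $\Omega^j$-coefficients. Since $\mathrm{GL}_n$ is a smooth affine group scheme over $\mathbf{Z}$ with a known de Rham cohomology (an exterior algebra on odd generators, coming from the fact that $\mathrm{GL}_n$ is, motivically/cohomologically, built from a tower of $\mathrm{GL}_1$'s and affine spaces), the totalization reduces to a question in homological algebra about the resulting cosimplicial module. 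The cleanest route is to observe that the de Rham cohomology ring $H^*_{dR}(\mathrm{GL}_{n,k})$ is a flat (indeed free) $k$-module with a Hopf-algebra structure compatible with the group structure, so the cosimplicial totalization is the cobar construction computing the cohomology of the corresponding "classifying space," which is the polynomial ring on the transgressions $c_1, \dots, c_n$ in degrees $2, 4, \dots, 2n$. One should cite Deligne's computation here — indeed the theorem is attributed to him in the title — and the point of the proof is to assemble (1), (2), (3) from this input.

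The key steps, in order: (i) Set up the bar resolution $\mathrm{BGL}_n$ and identify $R\Gamma_{dR}$ (resp. $R\Gamma(-,\Omega^j)$) with the totalization of the cosimplicial ring obtained by applying de Rham (resp. Hodge) cohomology termwise to $\mathrm{GL}_n^{\times \bullet}$; here I would first record the base case $n=1$, where $\mathrm{GL}_1 = \mathbf{G}_m$, $H^*_{dR}(\mathbf{G}_{m,k}) = k\langle dt/t\rangle$ in degrees $0,1$, and the bar totalization gives $k[c_1]$ with $c_1$ in degree $2$ — this already shows the Hodge–de Rham degeneration and the $\mathrm{Fil}^i_H$-claim in the rank-one case. (ii) Use the filtration of $\mathrm{GL}_n$ by the standard parabolic with Levi $\mathrm{GL}_{n-1}\times\mathrm{GL}_1$ and unipotent radical an affine space, or equivalently the fibration $\mathrm{GL}_n \to \mathrm{GL}_n/P \cong \mathbf{P}^{n-1}$-type building blocks, to inductively compute $H^*_{dR}(\mathrm{GL}_{n,k})$ as a free $k$-module (exterior algebra on generators in degrees $1,3,\dots,2n-1$) and, crucially, to see that these cohomology groups are flat over $k$ so that base change holds and the spectral sequences in play degenerate for dimension/parity reasons. (iii) Run the Eilenberg–Moore / bar spectral sequence for $\mathrm{BGL}_n$: the $E_2$-page is $\mathrm{Tor}$ over the exterior algebra, which is a polynomial algebra on the transgressed even-degree classes $c_1,\dots,c_n$; the odd/even parity again forces degeneration, giving $H^*_{dR}(\mathrm{BGL}_{n,k}) \cong k[c_1,\dots,c_n]$, and the $E_2$-page's internal (Hodge) grading shows $c_i \in \mathrm{Fil}^i_H$, proving (1) and (2). (iv) For (3), the map "add a trivial summand" $\mathrm{BGL}_n \to \mathrm{BGL}_{n+1}$ is induced by the block-diagonal inclusion $\mathrm{GL}_n \hookrightarrow \mathrm{GL}_{n+1}$; chasing this through the spectral sequence (or through the splitting principle with $\mathrm{GL}_1$'s) shows $c_i \mapsto c_i$ for $i \le n$ and the top class $c_{n+1}$ pulls back to a decomposable/zero class — in fact to $0$ since its transgression source dies — and then $H^*_{dR}(\mathrm{Vect}_\infty(-/k)) = \lim$ (or rather the appropriate colimit computing cohomology of a filtered colimit of stacks) $= k[c_1,c_2,\dots]$.

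I expect the main obstacle to be step (iii): making precise that the relevant descent/Eilenberg–Moore spectral sequence for stacks converges and degenerates, and that the Hodge filtration is carried along correctly so that $\eta(c_i)$ lands in $\mathrm{Fil}^i_H$. This is where one genuinely uses that $k$ is arbitrary (so one cannot appeal to characteristic-zero transcendental arguments) and where flatness over $k$ of all intermediate cohomology groups — which is what lets the spectral sequence behave as in the classical topological case — must be checked carefully. A secondary subtlety is the colimit in (3): $\mathrm{Vect}_\infty$ is a filtered colimit of stacks, and one must know that de Rham cohomology sends this colimit to the inverse limit of the $k[c_1,\dots,c_n]$ with the transition maps computed in (3), which is formal once the maps $c_i \mapsto c_i$, $c_{n+1}\mapsto 0$ are established but should be stated. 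Everything else — the bar resolution, the $n=1$ computation, the exterior-algebra structure of $H^*_{dR}(\mathrm{GL}_n)$ — is standard and can be cited from Deligne (and is presumably the content the author recalls rather than reproves).
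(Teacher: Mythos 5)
There is a genuine gap, and it occurs earlier than where you expect it (step (iii)): the input you feed into the bar/Eilenberg--Moore machine is simply wrong over a general base ring. Over an arbitrary $k$ — and the whole point of the theorem is that $k$ may be arbitrary, e.g.\ of characteristic $p$ or equal to $\mathbf{Z}$ — the de Rham cohomology of $\mathrm{GL}_n$ is \emph{not} a free exterior algebra on odd generators, and already your base case fails: $H^*_{dR}(\mathbf{G}_{m}/k)$ is not $k \oplus k\cdot dt/t$. If $k$ has characteristic $p$, the Cartier isomorphism shows that for a smooth affine scheme the de Rham cohomology is enormous; concretely $H^0_{dR}(\mathbf{G}_m/\mathbf{F}_p) = \mathbf{F}_p[t^{\pm p}]$ and $H^1_{dR}$ is a free rank-one module over it, and over $\mathbf{Z}$ one finds torsion classes $t^{n-1}dt$ of order $n$ in $H^1$. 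So the inductive computation in your step (ii) (exterior algebra in degrees $1,3,\dots,2n-1$, flat over $k$) is false outside characteristic $0$, and with it the cobar/transgression argument of step (iii) collapses: the termwise cohomology of the cosimplicial object $m \mapsto R\Gamma_{dR}(\mathrm{GL}_n^{\times m}/k)$ is huge, there is no K\"unneth-type control, and the "miracle" that the totalization is nevertheless the small polynomial ring $k[c_1,\dots,c_n]$ is exactly the nontrivial content of the theorem — it cannot be obtained by pretending the group has its characteristic-zero cohomology. (Your identification of $R\Gamma_{dR}(\mathrm{BGL}_n)$ with the totalization over the bar resolution is fine; that is the definition of stacky de Rham cohomology used in the paper. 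The colimit argument in your step (iv) is also fine once (2) and the behaviour of $f^*$ are known.)

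The paper avoids this by quoting computations that work integrally: Deligne's calculation via Grassmannian approximations of $\mathrm{Vect}_n$ (where the finite-dimensional approximations have Hodge cohomology that is free on Schubert classes concentrated in bidegrees $(i,i)$, which gives both the degeneration in (1) and the polynomial description in (2), with $\eta(c_i) \in \mathrm{Fil}^i_H$), or alternatively Totaro's direct computation of the Hodge cohomology of $\mathrm{BGL}_n$ together with the identification of the diagonal; part (3) is then read off from that calculation plus the formal fact $R\Gamma_{dR}(\mathrm{Vect}_\infty) \simeq \lim_n R\Gamma_{dR}(\mathrm{Vect}_n)$. If you want to give an actual argument rather than a citation, the viable route is the Grassmannian (or flag-variety/splitting-principle) approximation with its cell structure, not the bar spectral sequence over the cohomology of $\mathrm{GL}_n$; the latter only works after tensoring with $\mathbf{Q}$.
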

\begin{proof}
All statements follow from Deligne's calculation of $H^*_{dR}(\mathrm{Vect}_n(-/k))$ via Grassmannians, as explained in \cite[\S II]{Grospadicregulator}. Alternately, (1) is immediate from \cite[Theorem 4.1]{TotaroHodge} and (2) is \cite[Lecture 14, Theorem 18]{ClausendRcourse}. For (3), one examines the calculation in (2) to conclude that $f^*$ has the asserted properties; the statement about $\mathrm{Vect}_\infty(-/k)$ follows formally as $R\Gamma_{dR}(\mathrm{Vect}_\infty(-/k)) \simeq \lim_n R\Gamma_{dR}(\mathrm{Vect}_n(-/k))$.
\end{proof}

\section{Chern class vanishing for flat connections in characteristic $0$}
\label{sec:char0}

Our goal in this section, we explain a stack-theoretic proof of the following classical result.

\begin{theorem}
\label{dRChernVanish}
Let $X/\mathbf{C}$ be a smooth variety, and let $E \in \mathrm{Vect}(X)$ be a vector bundle admitting a flat connection. Then $c_i(E) = 0 \in H^{2i}_{dR}(X)$ for $i > 0$
\end{theorem}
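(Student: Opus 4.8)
The plan is to leverage the computation of \S2 together with the ``crystalline'' nature of de Rham cohomology. Recall that for a smooth $\mathbf{C}$-scheme $Y$ one has a natural identification $R\Gamma_{dR}(Y) \simeq R\Gamma(Y_{dR}, \mathcal{O}_{Y_{dR}})$, where $Y_{dR}$ is the de Rham stack of $Y$ (equivalently, one can work with the infinitesimal site $(Y/\mathbf{C})_{\mathrm{inf}}$), and that de Rham cohomology depends only on $Y_{dR}$. By Grothendieck, a flat connection on $E$ is exactly the datum of a descent of $E$ along the canonical map $\pi \colon X \to X_{dR}$ to a vector bundle $\overline{E}$ on $X_{dR}$. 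After passing to the connected components of $X$ so that $E$ has a fixed rank $n$, this means the classifying map $h \colon X \to \mathrm{Vect}_n(-/\mathbf{C})$ of $E$ factors as $X \xrightarrow{\pi} X_{dR} \xrightarrow{g} \mathrm{Vect}_n(-/\mathbf{C})$.

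The steps are then as follows. First, I would record the universal de Rham Chern classes $c_i^{\mathrm{univ}} \in H^{2i}_{dR}(\mathrm{Vect}_n(-/\mathbf{C}))$ and the naturality statement $c_i^{dR}(E) = h^*(c_i^{\mathrm{univ}})$ in $H^{2i}_{dR}(X)$, which holds for classifying maps from smooth schemes and (via the descent definition of de Rham cohomology) smooth stacks. Second, the crucial observation: because $h$ factors through the de Rham stack $X_{dR}$, and because $R\Gamma_{dR}(X_{dR}) \simeq R\Gamma(X_{dR}, \mathcal{O}) \simeq R\Gamma_{dR}(X)$ --- there are ``no forms'' on $X_{dR}$, equivalently $(X_{dR})_{dR} \simeq X_{dR}$ --- the pullback map $g^* \colon H^{2i}_{dR}(\mathrm{Vect}_n(-/\mathbf{C})) \to H^{2i}(X_{dR}, \mathcal{O})$ factors as the Hodge-degree-zero projection $H^{2i}_{dR}(\mathrm{Vect}_n(-/\mathbf{C})) \to H^{2i}(\mathrm{Vect}_n(-/\mathbf{C}), \mathcal{O})$ followed by pullback along $g$ on coherent cohomology. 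Third, Theorem~\ref{DeligneCalc}(1) (with $j = 0$) gives $H^{2i}(\mathrm{Vect}_n(-/\mathbf{C}), \mathcal{O}) = 0$ for $i > 0$, so $g^*(c_i^{\mathrm{univ}}) = 0$ and hence $c_i^{dR}(E) = \pi^*(g^*(c_i^{\mathrm{univ}})) = 0$.

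The main obstacle is the second step: one must argue that the flat connection forces the de Rham Chern class of $E$ --- which a priori records $c_i^{\mathrm{univ}}$, a nonzero element of $H^{2i}_{dR}(\mathrm{Vect}_n(-/\mathbf{C}))$ --- to be computed already from the shadow of $c_i^{\mathrm{univ}}$ in the coherent cohomology $H^{2i}(\mathrm{Vect}_n(-/\mathbf{C}), \mathcal{O})$. This is precisely where flatness enters, through the idempotency and functoriality of the de Rham stack; for contrast, a mere (not necessarily flat) connection only makes the Atiyah class vanish, which forces $c_i^{dR}(E) \in \mathrm{Fil}^{i+1}_H H^{2i}_{dR}(X)$ but not $c_i^{dR}(E) = 0$. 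The remaining inputs --- Grothendieck's equivalence between flat connections and vector bundles on $X_{dR}$, the reduction to constant rank, the naturality of de Rham Chern classes, and the comparison $R\Gamma_{dR}(-) \simeq R\Gamma((-)_{dR}, \mathcal{O})$ applied to the stacks $X_{dR}$ and $\mathrm{Vect}_n(-/\mathbf{C})$ rather than just to schemes --- are standard and routine.
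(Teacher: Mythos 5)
Your proposal is correct and follows essentially the same route as the paper: interpret the flat connection as a factorization of the classifying map through the de Rham stack $X^{dR}$, use naturality and idempotency of $(-)^{dR}$ (equivalently, that $\eta^*$ is the Hodge-degree-zero projection) to factor the pullback of the universal de Rham Chern classes through the coherent cohomology $H^{*}(\mathrm{Vect}_n(-/\mathbf{C}),\mathcal{O})$, and conclude by the vanishing in Theorem~\ref{DeligneCalc}(1). This is exactly the argument given in \S\ref{sec:char0}.
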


The standard proof of this result relies on Chern--Weil theory: this theory reconstructs the de Rham Chern classes of $E$ from the curvature of a connection on $E$, which trivially gives their vanishing when the curvature vanishes. In this section, we instead explain a purely algebraic argument via Simpson's de Rham stack formalism; the only non-formal input is the vanishing of $H^{>0}(\mathrm{BGL}_n, \mathcal{O})$ (Theorem~\ref{DeligneCalc}). 
The material plays no direct role in the rest of the paper, but it does provide a simple toy model for our later work in positive/mixed characteristic. 

First, let us recall the de Rham stack construction and its main features: 

\begin{construction}[Simpson's de Rham stack]
Fix a presheaf $Y$ (of, say, groupoids) on finitely generated $\mathbf{C}$-algebras. The de Rham stack $Y^{dR}$ is the presheaf on the same category determined by the formula $Y^{dR}(R) = Y(R_{red})$. There is a tautological map $\eta_Y:Y \to Y^{dR}$ of presheaves. Note that if $Y$ is a sheaf for the Zariski/\'etale topology, the same holds true for $Y^{dR}$.
\end{construction}

The de Rham stack is rather computable:

\begin{example}[de Rham stack calculations]
\label{dRStackC}
We explain two calculations of de Rham stacks to explain their geometric nature:
\begin{enumerate}
\item Varieties: Say $Y=\mathbf{G}_a$. Then we have $Y^{dR}(R) = R_{red} = R/\mathrm{Nil}(R)$. Noticing that $\widehat{\mathbf{G}_a}(R) = \mathrm{Nil}(R)$, we can identify $\mathbf{G}_a^{dR}$, via the map $\eta_{\mathbf{G}_a}$, as the quotient stack $\mathbf{G}_a/\widehat{\mathbf{G}_a}$, where the quotient is interpreted in presheaves (or, since $H^{>0}_{et}(-,\widehat{\mathbf{G}_a})$ vanishes on affines, equivalently in \'etale sheaves). This example admits two generalizations:
\begin{itemize}
\item If $Y$ is a smooth variety equipped with an \'etale map $Y \to \mathbf{A}^n$, then $Y^{dR} \simeq Y/\widehat{\mathbf{G}_a}^n$, where the action of the $i$-th copy of $\widehat{\mathbf{G}_a}$ corresponds to exponentiating the vector field $\frac{d}{dx_i}$, with $x_i$ being the $i$-th co-ordinate on $\mathbf{A}^n$.

\item If $G/\mathbf{C}$ is a smooth group scheme, then $G^{dR} = G/\widehat{G}$, where $\widehat{G} \subset G$ is the formal completion of $G$ at the origin. 
\end{itemize}

\item Stacks: The construction $Y \mapsto Y^{dR}$ preserves finite limits as well as \'etale surjections. Consequently, if $Y$ is an algebraic stack presented as a quotient groupoid $U/R$ in schemes, then $Y^{dR} = U^{dR}/R^{dR}$ as an \'etale sheaf. In particular, if $G/\mathbf{C}$ is a smooth group scheme with classifying stack $BG$ in the \'etale topology, then $(BG)^{dR} = B(G^{dR}) = B(G/\widehat{G}) = BG/B\widehat{G}$ as \'etale sheaves.
\end{enumerate}
\end{example}

Perhaps the most relevant feature of this construction is that it turns de Rham cohomology into coherent cohomology, and similarly for coefficients:

\begin{theorem}[Simpson]
\label{dRStackComp}
Say $Y/\mathbf{C}$ is a smooth variety.
\begin{enumerate}
\item The category $\mathrm{Vect}(Y^{dR})$ is identified with the category $\mathrm{Vect}^\nabla(Y)$ of vector bundles with flat connection on $Y$, with pullback along $\eta_Y$ amounting to forgetting the connection. 
\item If we write $(E,\nabla)^{dR} \in \mathrm{Vect}(Y^{dR})$ for the vector bundle corresponding to a pair $(E,\nabla) \in \mathrm{Vect}^\nabla(Y)$, then there is a natural identification
\[ R\Gamma_{dR}(Y, (E,\nabla)) \simeq R\Gamma(Y^{dR}, (E,\nabla)^{dR})\]
of complexes. In particular, we have $R\Gamma_{dR}(Y) = R\Gamma(Y^{dR}, \mathcal{O})$; the pullback $\eta^*:R\Gamma(Y^{dR},\mathcal{O}) \to R\Gamma(Y,\mathcal{O})$ corresponds to passage to $\mathrm{gr}^0$ of that Hodge filtration.
\end{enumerate}
\end{theorem}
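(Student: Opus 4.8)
The plan is to present $Y^{dR}$ as the geometric realization of the \v{C}ech nerve of $\eta_Y$, to recognize the associated groupoid as the classical infinitesimal (de Rham) groupoid of $Y$, and then to deduce (1) and (2) from descent together with Grothendieck's comparison of stratifications and infinitesimal cohomology with integrable connections and de Rham cohomology. First I would observe that $\eta_Y\colon Y\to Y^{dR}$ is an epimorphism of \'etale sheaves: since $Y$ is smooth and $R\to R_{red}$ is a nilpotent thickening, any $R_{red}$-point of $Y$ lifts \'etale-locally on $\mathrm{Spec}(R)$ (indeed Zariski-locally when $Y$ is affine) to an $R$-point. Hence $Y^{dR}\simeq |P^\bullet_Y|$, where $P^n_Y:=Y\times_{Y^{dR}}\cdots\times_{Y^{dR}}Y$ ($n+1$ factors); unwinding the functor of points, $P^n_Y(R)$ is the set of $(n+1)$-tuples in $Y(R)$ with common image in $Y(R_{red})$, so $P^n_Y$ is the formal completion of the diagonal $Y\hookrightarrow Y^{\times(n+1)}$, a formal scheme affine over $Y$ (with underlying space $Y$) whose structure sheaf is $\mathcal{P}^n_Y=\varprojlim_m \mathcal{O}_{Y^{\times(n+1)}}/\mathcal{I}_\Delta^{m}$. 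These assemble into the standard infinitesimal formal groupoid over $Y$, with $P^0_Y=Y$.

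For (1), since $Y^{dR}\simeq|P^\bullet_Y|$, pulling back quasi-coherent sheaves gives $\mathrm{QCoh}(Y^{dR})\simeq \lim_{[n]\in\Delta}\mathrm{QCoh}(P^n_Y)$, which is the category of quasi-coherent $\mathcal{O}_Y$-modules equipped with a stratification (an isomorphism $p_0^*E\xrightarrow{\sim}p_1^*E$ over $P^1_Y$ satisfying the cocycle condition over $P^2_Y$), with $\eta_Y^*$ the functor sending such data to the underlying module on $P^0_Y=Y$. Restricting a stratification to the first infinitesimal neighborhood of the diagonal yields a connection on $E$ whose integrability is forced by the cocycle condition; conversely --- and this is where characteristic $0$ is essential --- a flat connection exponentiates to a stratification via the $\mathcal{I}_\Delta$-adically convergent Taylor expansion $\epsilon=\sum_\alpha \tfrac{1}{\alpha!}\nabla^\alpha(-)\,\xi^\alpha$. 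This is Grothendieck's equivalence between stratifications and integrable connections \cite{GrothendieckCrystalsDix}; restricting to locally free objects gives $\mathrm{Vect}(Y^{dR})\simeq \mathrm{Vect}^\nabla(Y)$ compatibly with $\eta_Y^*$.

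For (2), cohomological descent along $P^\bullet_Y\to Y^{dR}$ identifies $R\Gamma(Y^{dR},(E,\nabla)^{dR})$ with the totalization of the cosimplicial object $[n]\mapsto R\Gamma(P^n_Y,(E,\nabla)^{dR}|_{P^n_Y})$; since $P^n_Y$ is affine over $Y$ and $(E,\nabla)^{dR}|_{P^n_Y}$ pulls back from $Y$ along the first projection, the $n$-th term is $R\Gamma(Y,E\otimes_{\mathcal{O}_Y}\mathcal{P}^n_Y)$. I would then identify the totalization of $[n]\mapsto E\otimes_{\mathcal{O}_Y}\mathcal{P}^n_Y$ with the connection-twisted de Rham complex $E\otimes_{\mathcal{O}_Y}\Omega^\bullet_Y$ by working Zariski-locally: choosing an \'etale map $Y\to\mathbf{A}^N$, Example~\ref{dRStackC} gives $Y^{dR}\simeq Y/\widehat{\mathbf{G}_a}^{N}$ with the $i$-th factor acting by exponentiating $\partial/\partial x_i$, so $R\Gamma(Y^{dR},(E,\nabla)^{dR})$ is computed by the (continuous) Chevalley--Eilenberg/Koszul complex of the abelian Lie algebra $\bigoplus_i \mathbf{C}\cdot\partial/\partial x_i$ acting on $R\Gamma(Y,E)$ through $\nabla$, which under the trivialization $\Omega^1_Y\cong\mathcal{O}_Y^{\oplus N}$ by the $dx_i$ is exactly the de Rham complex of $(E,\nabla)$. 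Checking that these local identifications are independent of the chart and of the chosen projections lets them glue to the natural isomorphism $R\Gamma(Y^{dR},(E,\nabla)^{dR})\simeq R\Gamma_{dR}(Y,(E,\nabla))$. Taking $(E,\nabla)=(\mathcal{O}_Y,d)$ gives $R\Gamma(Y^{dR},\mathcal{O})\simeq R\Gamma_{dR}(Y)$, and under this identification $\eta_Y^*$ is the canonical map from the totalization to its $[0]$-term --- equivalently the quotient $E\otimes_{\mathcal{O}_Y}\Omega^\bullet_Y\to E\otimes_{\mathcal{O}_Y}\Omega^0_Y$ --- which is passage to $\mathrm{gr}^0$ of the stupid Hodge filtration.

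The hard part, and the only genuinely non-formal input, is concentrated in two places that both rely on characteristic $0$: that a flat connection integrates to a full stratification (convergence of the exponential/Taylor series in the $\mathcal{I}_\Delta$-adic topology), and the ``Poincar\'e lemma'' identifying $\mathrm{Tot}\big(E\otimes_{\mathcal{O}_Y}\mathcal{P}^\bullet_Y\big)$ with the de Rham complex --- equivalently, Grothendieck's comparison of infinitesimal and de Rham cohomology. I expect the cleanest self-contained route for the latter to be the \'etale-local Lie-algebra-cohomology computation above, so the remaining care will be in making those local identifications canonical enough to sheafify and compatible with the Hodge filtration.
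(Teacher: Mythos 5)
Your proposal is correct and follows essentially the same route as the paper's sketch: exhibit $\eta_Y$ as an effective epimorphism via smoothness, identify its Cech nerve with the formal completions of the diagonals, and conclude by descent plus the standard infinitesimal-site comparisons. The only difference is that you unwind the ``standard results on the infinitesimal site'' that the paper cites as a black box (Grothendieck's stratification/flat-connection equivalence and the infinitesimal--de Rham cohomology comparison, the latter via a local $Y/\widehat{\mathbf{G}_a}^N$ Koszul computation), which is a fine way to make the sketch self-contained.
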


The theorem also extends to smooth\footnote{In fact, the smoothness assumption can be dropped by \cite{BhattCompletions}.} stacks (for suitable definitions of flat connections in that context).  We refer to \cite{SimpsonTeleman,GaitsgoryRozenblyumCrystals} as well as \cite[\S 2.3]{Bhatt2022LecturesFGauges} for further discussion of this fascinating construction. 

\begin{proof}[Proof sketch]
As $Y$ is a smooth variety, the map $\eta_Y:Y \to Y^{dR}$ is a surjection of presheaves (and thus after sheafification for any topology) by the infinitesimal lifting property of smooth morphisms. Consequently, if $Y^{\bullet/Y^{dR}}$ denotes the Cech nerve of $\eta_Y:Y \to Y^{dR}$, we have a descent identification
\[ \mathcal{D}_{qc}(Y^{dR}) \simeq \lim \mathcal{D}_{qc}(Y^{\bullet/Y^{dR}}).\]
On the other hand, by inspection from the functor points, we can identify $Y^{\bullet/Y^{dR}}$ with the formal completion of the Cech nerve of $Y \to \mathrm{Spec}(\mathbf{C})$ along the small diagonal. Consequently, by the basic formalism of the infinitesimal site, we learn that 
\[ \lim \mathcal{D}_{qc}(Y^{\bullet/Y^{dR}}) \simeq \mathcal{D}_{qc}(Y_{\mathrm{inf}})\]
and thus that
\[ \mathcal{D}_{qc}(Y^{dR}) \simeq \mathcal{D}_{qc}(Y_{\mathrm{inf}}).\]
The theorem now follows from standard results on the infinitesimal site. 
\end{proof}

An essential feature of the de Rham stack construction is that $Y^{dR}$ is quite close to being an algebraic stack, so we can reasonably treat it as an object of algebraic geometry (see Example~\ref{dRStackC}). Importantly,  both $Y$ and $Y^{dR}$ inhabit the same world and can thus talk to each other rather easily.   Let us explain how to use this flexibility to prove Theorem~\ref{dRChernVanish}.

\begin{proof}[Proof of Theorem~\ref{dRChernVanish}]
Let $G=\mathrm{GL}_{n,/\mathbf{C}}$ where $n=\mathrm{rank}(E)$, so $BG=\mathrm{Vect}_n(-/\mathbf{C})$ is the stack of rank $n$ vector bundles over $\mathbf{C}$. The vector bundle $E$ is classified by a map $[E]:X \to BG$, which then gives rise to the map $[E]^{dR}:X^{dR} \to {BG}^{dR}$ on de Rham stacks. By the comparison in Theorem~\ref{dRStackComp} as well as the calculation in Theorem~\ref{DeligneCalc}, we have universal Chern classes $c_1,....,c_n \in H^{2i}(\mathrm{BG}^{dR}, \mathcal{O}) \simeq H^{2i}_{dR}({BG})$ such that 
\begin{equation}
\label{eq:ccdR}
 c_i(E) = ([E]^{dR})^* c_i \in H^{2i}(X^{dR}, \mathcal{O}) \simeq H^{2i}_{dR}(X).
 \end{equation}
Using the dictionary from Theorem~\ref{dRStackComp}, the given flat connection $\nabla$ on $E$ amounts to descending $E$ along $\eta_X$, i.e. to specifying a map $\alpha_E:X^{dR} \to {BG}$ together with an identification of the composite
\[ X \xrightarrow{\eta_X} X^{dR} \xrightarrow{\alpha_E} {BG} \]
with the classifying map $[E]$. Given this structure,  the functor of points description identifes the composition
\[ X^{dR} \xrightarrow{\alpha_E} {BG} \xrightarrow{\eta_{{BG}}} {BG}^{dR} \]
 with $[E]^{dR}$. In particular, the pullback
\[ ([E]^{dR})^*:H^*({BG}^{dR},\mathcal{O}) \to H^*(X^{dR},\mathcal{O})\]
factors over $H^*({BG},\mathcal{O})$, which vanishes in positive degrees by Theorem~\ref{DeligneCalc}; this implies the vanishing of $c_i(E)$ for $i > 0$ by the formula \eqref{eq:ccdR}.
\end{proof}

In the sequel, we will give more elaborate versions of the above argument, first in crystalline cohomology and then in prismatic cohomology. Before ending this section, for completeness, let us note a standard example showing one cannot obtain a stronger integral statement over $\mathbf{C}$:

\begin{example}[A torsion Chern class for a flat bundle]
\label{Torsc1Ex}
Fix an integer $n > 0$. Let $X$ be a complex analytic space with fundamental group $G=\mathbf{Z}/n$ (such as an Enriques surface if $n=2$). Picking a primitive $n$-th root of unity gives an injective homomorphism $\chi:G \to \mathbf{C}^*$, which in turn corresponds to a rank $1$ local system $L$ on $X$; this gives rise to a line bundle with flat connection $\mathcal{L} = L \otimes_{\mathbf{C}} \mathcal{O}_X$.  We claim that the first Chern class $c:= c_1(\mathcal{L}) \in H^2(X,\mathbf{Z})$ is nonzero and in fact its image $\overline{c} \in H^2(X,\mathbf{Z}/n)$ is already nonzero.  Indeed, by comparing the Kummer sequences for the $n$-power map on the constant sheaf $\mathbf{C}^*$ and the sheaf $\mathcal{O}_X^*$ of invertible functions on $X$, we would learn that if $\overline{c} =0$, then the rank $1$ local system $L$ admits an $n$-th root;  this is not possible as the character $\chi$ of $G$ does not admit an $n$-th root.
\end{example}

\section{Crystalline Chern classes for perfect complexes}
\label{sec:ChernCons}

Our goal in this section is to construct Chern classes for perfect complexes in crystalline cohomology (Construction~\ref{conscryschernperf}). We shall first construct them for vector bundles using Theorem~\ref{DeligneCalc} in \S \ref{ss:ChernVect}, and then extend to perfect complexes using $K$-theory in \S \ref{ss:ChernPerf}. But first, with an eye towards later use, we record in \S \ref{ss:CrysFun} a procedure to compute crystalline cohomology in terms of a ``crystallization'' functor; this is essentially a reformulation of the classical approach to crystalline cohomology via the big crystalline site where we emphasizing working with (pre)sheaves in a fixed topos rather than letting the topos vary. Our basic setup for crystalline cohomology is the following.

\begin{notation}[The base category of PD-algebras]
\label{not:crys}
Fix a $p$-adically complete and $p$-torsionfree ring $W$, and let $k = W/p$. The category of $p$-complete $W$-algebras is endowed with the $p$-complete Zariski topology. Let $\mathrm{PDAlg}_{/W}$ be the category of pairs $(D,J)$, where $D$ is a $p$-nilpotent $W$-algebra and $J \subset D$ is an ideal containing $p$ and equipped with a divided power structure compatible with the one on $pD$. We equip this category with the topology induced by the Zariski topology on $D$ (or equivalently $D/J$ as $D \to D/J$ has a locally nilpotent kernel). The presheaves $\mathcal{O}_\crys, \mathcal{J}_\crys, \overline{\mathcal{O}_\crys}$ are given by the functors $(D,J) \mapsto D, J, D/J$ respectively; they are all Zariski sheaves as the notion of a divided power structure is Zariski local. 
\end{notation}

\begin{remark}[Relation to big crystalline sites]
\label{RelatePDAlgBigCrys}
The category $\mathrm{PDAlg}_{/W}$ is closely related to the big crystalline site as discussed in the Stacks project \cite[Tag 0715]{StacksProject}: the opposite of $\mathrm{PDAlg}_{/W}$ coincides with the full subcategory of affine objects in the big crystalline site $\mathrm{CRIS}(\mathrm{Spec}(k)/(W, (p), \mathrm{can}))$ of $\mathrm{Spec}(k)$ relative to the PD-ring obtained from $W$ by by giving $pW$ its unique  PD-structure. 

Similarly, given a $k$-scheme $X$ (or in fact any presheaf on $k$-algebras), the full subcategory of affine objects in the big crystalline site $\mathrm{CRIS}(X/(W,(p),\mathrm{can}))$ identifies with the category $\mathcal{C}_X$ of pairs $(D,J) \in \mathrm{PDAlg}_{/W}$ equipped with a map $\mathrm{Spec}(D/J) \to X$. For future use, let us write 
\[ R\Gamma_\crys(X/W) := R\Gamma(\mathrm{CRIS}(X/(W,(p),\mathrm{can}), \mathcal{O}_\crys)\]
 and call it the crystalline cohomology\footnote{For smooth $k$-schemes $X$,  it suffices to work with the small crystalline site $\mathrm{Cris}(X/(W,(p),\mathrm{can}))$ for the purpose of constructing/studying crystalline cohomology and crystals. However, we will need to contemplate the crystalline cohomology when $X$ is more general (e.g., an Artin stack); in such cases, the small crystalline site is typically too small, and one must instead work with the big crystalline site to obtain the correct invariants.} of $X/W$. 
\end{remark}

\subsection{The crystallization functor}
\label{ss:CrysFun}

Given a $k$-scheme or $k$-stack $X$, one classically constructs its category of crystals and crystalline cohomology using the big crystalline site $\mathrm{CRIS}(X/(W,(p),\mathrm{can}))$ (see second paragraph of Remark~\ref{RelatePDAlgBigCrys}).  Let us explain a reformulation of these notions which avoids contemplating the big crystalline site directly except in the case of a point.

\begin{construction}[The crystallization functor]
Let $X$ be a presheaf on $k$-algebras. Write $X^\crys$ for presheaf on $\mathrm{PDAlg}_{/W}$ defined by 
\[ X^\crys(D,J) = X(D/J) = \mathrm{Map}_k(\mathrm{Spec}(D/J),X).\] 
When $X$ is a sheaf for the Zariski topology, so is $X^\crys$. Unless otherwise specified, we shall write $(X^\crys, \mathcal{O}_\crys)$ for the slice ringed topos $(\mathrm{Shv}(\mathrm{PDAlg}_{/W})_{/X^{\crys}},\mathcal{O}_\crys)$. We shall write $\mathrm{Perf}(X^\crys, \mathcal{O}_\crys)$ for the $\infty$-category of crystals of perfect complexes, i.e.,
\[ \mathrm{Perf}(X^\crys, \mathcal{O}_\crys) := \lim_{\mathcal{C}_X} \mathrm{Perf}(D),\]
where $\mathcal{C}_X$ is the category of points of $X^\crys$, i.e., the category of pairs $(D,J) \in \mathrm{PDAlg}_{/W}$ equipped with a $k$-map $\mathrm{Spec}(D/J) \to X$. 
\end{construction}

\begin{example}
When $X=\mathbf{G}_a$ on $k$-algebras, we have $X^\crys = \overline{\mathcal{O}_\crys}$. 
\end{example}

The following result is essentially a definition, but we state it separately for ease of reference and also because similar constructions appear later  in the prismatic context (Construction~\ref{Cons:Prismatize}):

\begin{proposition}
\label{cryscohstack}
For any presheaf $X$ on $k$-algebras, there is a natural identifcation between $\mathrm{Perf}(X^\crys,\mathcal{O}_\crys)$ and crystals of perfect complexes on $X$. In particular, the complex $R\Gamma(X^\crys, \mathcal{O}_\crys)$ identifies naturally with the crystalline cohomology $R\Gamma_\crys(X/W)$ of $X$ relative to $W$.
\end{proposition}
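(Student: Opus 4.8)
The plan is to prove this by unwinding the two definitions and matching them up through Remark~\ref{RelatePDAlgBigCrys}: both the $\infty$-category $\mathrm{Perf}(X^\crys,\mathcal{O}_\crys)$ and the $\infty$-category of crystals of perfect complexes on $X$ will be exhibited as a limit of $\mathrm{Perf}(-)$ over the same indexing category $\mathcal{C}_X$, and both $R\Gamma(X^\crys,\mathcal{O}_\crys)$ and $R\Gamma_\crys(X/W)$ as the cohomology of $\mathcal{O}_\crys$ on the site $\mathcal{C}_X$. The only point of genuine content is a Zariski-descent argument allowing one to restrict attention to affine test objects.

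\textbf{Step 1 (reduce the classical side to affine test objects).} First I would recall that, for a presheaf $X$ on $k$-algebras, a crystal of perfect complexes on $X$ is by definition a Cartesian section of the fibered $\infty$-category over the big crystalline site $\mathrm{CRIS}(X/(W,(p),\mathrm{can}))$ sending an object $(U,T,\delta)$ to $\mathrm{Perf}(\mathcal{O}_T)$, with transition functors given by derived pullback, and that $R\Gamma_\crys(X/W):=R\Gamma(\mathrm{CRIS}(X/(W,(p),\mathrm{can})),\mathcal{O}_\crys)$. Since perfect complexes satisfy (even $p$-complete) Zariski descent and cohomology of a sheaf agrees with its cohomology on a dense subsite, I may replace $\mathrm{CRIS}(X/(W,(p),\mathrm{can}))$ by its full subcategory of affine objects with the induced topology, without changing either the $\infty$-category of crystals or the cohomology of $\mathcal{O}_\crys$. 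Here one uses the normalization of Notation~\ref{not:crys}: for $(D,J)\in\mathrm{PDAlg}_{/W}$ the surjection $D\to D/J$ has locally nilpotent kernel, so the Zariski sites of $\mathrm{Spec}(D)$ and $\mathrm{Spec}(D/J)$ coincide and affine objects form a basis.

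\textbf{Step 2 (identify that affine site with the points of $X^\crys$).} By the second paragraph of Remark~\ref{RelatePDAlgBigCrys}, the full subcategory of affine objects of $\mathrm{CRIS}(X/(W,(p),\mathrm{can}))$ is precisely $\mathcal{C}_X$: the category of pairs $(D,J)\in\mathrm{PDAlg}_{/W}$ equipped with a $k$-map $\mathrm{Spec}(D/J)\to X$, with the Zariski topology. By construction, $\mathcal{C}_X$ is exactly the category of points (category of elements) of the presheaf $X^\crys$ on $\mathrm{PDAlg}_{/W}$, so the slice ringed topos $(\mathrm{Shv}(\mathrm{PDAlg}_{/W})_{/X^\crys},\mathcal{O}_\crys)$ is computed by the site $\mathcal{C}_X$ together with the restriction of $\mathcal{O}_\crys$, namely the functor $(D,J)\mapsto D=\mathcal{O}_T$. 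Combining the two steps, the $\infty$-category of crystals of perfect complexes on $X$ is identified with $\lim_{\mathcal{C}_X}\mathrm{Perf}(D)$ — the limit of a \emph{covariant} functor, which is exactly what forces all transition maps to be equivalences and hence encodes the crystal condition — and this is $\mathrm{Perf}(X^\crys,\mathcal{O}_\crys)$ by the defining formula of the crystallization construction; likewise $R\Gamma_\crys(X/W)=R\Gamma(\mathcal{C}_X,\mathcal{O}_\crys)=R\Gamma(X^\crys,\mathcal{O}_\crys)$, the last equality being the definition of cohomology on the slice topos.

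\textbf{Main obstacle.} Essentially everything is a definition chase once Remark~\ref{RelatePDAlgBigCrys} is in hand; the only non-formal ingredient is Step~1 — the passage from the big crystalline site (which classically also admits non-affine test objects $T$, and for $X$ a scheme is often replaced by the small crystalline site) to its affine part. The care required there is twofold: that $\mathrm{Perf}(-)$ satisfies enough Zariski descent so that the limit over the full big site agrees with the limit over affine objects, and that the comparison of the cohomology of $\mathcal{O}_\crys$ on a site with its cohomology on a coinitial/dense subsite applies in this ($p$-nilpotent, $\infty$-categorical) setting. Both facts are standard, but they are the substantive content; the remainder is bookkeeping.
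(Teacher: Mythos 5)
Your proposal is correct, and for the main identification it follows the paper's own route exactly: crystals of perfect complexes are by definition the limit of $\mathrm{Perf}(T)$ over $\mathrm{CRIS}(X/(W,(p),\mathrm{can}))$, Zariski descent for perfect complexes lets one restrict to the affine objects, and by Remark~\ref{RelatePDAlgBigCrys} those affine objects are precisely the category $\mathcal{C}_X$ appearing in the definition of $\mathrm{Perf}(X^\crys,\mathcal{O}_\crys)$. The only place you diverge is the second assertion: the paper deduces $R\Gamma(X^\crys,\mathcal{O}_\crys)\simeq R\Gamma_\crys(X/W)$ formally from the first statement, by computing endomorphisms of the unit crystal on both sides, whereas you argue directly by comparing sheaf cohomology on the big crystalline site with cohomology on its affine (dense) subsite and identifying slice-topos cohomology over $X^\crys$ with the limit over the category of elements $\mathcal{C}_X$. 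Both arguments are standard and valid; the paper's trick has the small advantage of not requiring a separate comparison-of-sites step for cohomology (it recycles the already-established equivalence of categories), while your version makes the site-theoretic content explicit, which is the point you correctly flag as the only non-formal ingredient.
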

\begin{proof}
For the first statement, recall that the $\infty$-category of crystals of perfect complexes on $X$ is, by definition, given by the inverse limit
\[ \lim_{(U,T,\delta) \in \mathrm{CRIS}(X/(W, (p), \text{can})} \mathrm{Perf}(T).\]
By Zariski descent for perfect complexes, it suffices to compute the limit over affine objects of $\mathrm{CRIS}(X/(W, (p), \text{can})$, which is precisely our definition of  $\mathrm{Perf}(X^\crys,\mathcal{O}_\crys)$.  The second statement is deduced by computing endomorphisms of the unit crystal on either side.
\end{proof}

\subsection{Chern classes for vector bundles}
\label{ss:ChernVect}

In this subsection, we recall the construction of Chern classes for vector bundles in crystalline cohomology using Theorem~\ref{DeligneCalc}; our definitions are formulated in terms of the crystallization functor from \S \ref{ss:CrysFun}.

\begin{construction}[Chern classes for vector bundles]
\label{ChernVect}
Given a presheaf $X$ on $k$-algebras, a vector bundle $E$ on $X$, and an integer $i > 0$, our goal is to construct a natural map
\[ X^\crys \xrightarrow{c_i(E)} \mathcal{O}_\crys[2i]\]
of presheaves of spaces on $\mathrm{PDAlg}_{/W}$; its homotopy class is an element of $H^{2i}_\crys(X/W)$, and is called the $i$-th Chern class $c_i(E)$ of $E$. 

First, let us construct it in the universal case. For each $n \geq 0$, we claim that there are natural identifications of graded rings
\[ H^*_{\crys}(\mathrm{Vect}_n(-/k)/W) \simeq H^*_{dR}(\mathrm{Vect}_n(-/W)/W)^{\wedge} \simeq W[c_1,c_2,....,c_n],\]
where $\deg(c_i) = 2i$; the first equality uses the standard fact that crystalline cohomology can be computed as the $p$-completed de Rham cohomology of a flat $W$-lift, and the second equality is Theorem~\ref{DeligneCalc}. Taking the inverse limit over $n$ gives an isomorphism of graded rings
\[ H^*_{\crys}(\mathrm{Vect}_\infty(-/k)/W) \simeq W[c_1,c_2,...,c_n,...],\]
where $\deg(c_i) = 2i$. Using the identification $R\Gamma_\crys(-/W) \simeq R\Gamma( (-)^\crys, \mathcal{O}_\crys)$ from Lemma~\ref{cryscohstack}, we obtain, for each $i > 0$, well-defined homotopy classes of maps
\[ \mathrm{Vect}_n(-/k)^\crys \to \mathcal{O}_\crys[2i] \quad \text{and} \quad \mathrm{Vect}_\infty(-/k)^\crys \to \mathcal{O}_\crys[2i]\]
with the second one factoring the first (where we work in sheaves of spaces on $\mathrm{PDAlg}_{/W}$); we call either of these the (universal) $i$-th Chern class map.

We can now specialize the above universal construction. If $X$ is a $k$-scheme and $E$ is a rank $n$ vector bundle on $X$, then $E$ is classified by a map $[E]:X \to \mathrm{Vect}_n(-/k)$.  Crystallizing this map and composing with the universal $c_i$ yields a map
\[ X^\crys \xrightarrow{[E]^\crys} \mathrm{Vect}_n(-/k)^\crys \xrightarrow{c_i} \mathcal{O}_\crys[2i],\]
and thus a cohomology class $c_i(E) \in H^{2i}_\crys(X)$; we call this the $i$-th Chern class of $E$.
\end{construction}

\subsection{Chern classes for perfect complexes}
\label{ss:ChernPerf}

We now extend the Chern classes from \S \ref{ss:ChernVect} to perfect complexes using algebraic $K$-theory. Before diving into the construction, let us briefly explain the strategy\footnote{It is perhaps worth explaining why this subsection is not reduced to the single sentence that ``the $K$-group of vector bundles and perfect complexes coincide on a regular scheme.'' The primary reason is that we formulate statements at the space level, since we later wish to apply them to the crystallization functor.}. First, one constructs Chern classes for elements in $K$-theory using the group completion theorem in algebraic $K$-theory. Next, one applies the Thomason-Trobaugh theorem implying that the $K$-theory of perfect complexes and vector bundles on affines agree, thus giving Chern classes of perfect complexes on affines. Finally, as our constructions are actually performed at the space level (instead of being at the level of cohomology classes), one can globalize to perfect complexes on any $X$.

Let us begin by recalling the Thomason-Trobaugh approach to $K$-theory via perfect complexes and its relation to Quillen's classical construction.

\begin{construction}[$K$-theory via vector bundles or perfect complexes]
\label{KVectPerf}
For a commutative ring $R$, define $K(R)$ to be the Thomason-Trobaugh (connective) $K$-theory of the stable $\infty$-category $\mathrm{Perf}(R)$ of perfect $R$-complexes on $X$ as in \cite{ThomasonTrobaugh}.	 By  
\cite[3.10]{ThomasonTrobaugh}, this agrees with Quillen's $K$-theory space $K(R)$ attached to the exact category $\mathrm{Vect}(R)$ of vector bundles on $R$ as in \cite{QuillenK1}. As $\mathrm{Vect}(R)$ is a split exact category, its $K$-theory is given by the group completion of the $E_\infty$-space $\mathrm{Vect}(R)^{\simeq}$ given by the groupoid underlying $\mathrm{Vect}(R)$ with symmetric monoidal structure given by direct sum, i.e., the natural map
\[ \mathrm{Vect}(R)^{\simeq} \to K(R)\]
of $E_\infty$-spaces identifies $K(R)$ with the group completion $\Omega B \mathrm{Vect}(R)^{\simeq}$ (see \cite[Theorem 7.1]{WeibelKBook}). Sheafifying for the Zariski topology and using the Zariski sheaf property of $K$-theory, we learn that the natural map
\[ \sqcup_n B(\mathrm{GL}_n(-)) \to K(-)\]
of presheaves on commutative rings becomes a group completion after sheafifying the source. 
\end{construction}

To use the above effectively, we shall need to control the cohomology of the $K$-theory presheaf. This control is provided by the group completion theorem, whose statement we recall next, with a brief hint of the proof:

\begin{theorem}[Quillen, McDuff-Segal]
\label{McDuffSegal}
Fix a commutative ring $R$, and let $M = \sqcup_n B(\mathrm{GL}_n(R))$ be the displayed $E_\infty$-space. Then the group completion $M^{grp} := \Omega BM$ of $M$ comes equipped with a homology equivalence $M_\infty := \mathbf{Z} \times B(\mathrm{GL}_\infty(R)) \to M^{grp}$, functorially in $R$.
\end{theorem}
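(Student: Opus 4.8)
The plan is to deduce this from the general group completion theorem for $E_\infty$-spaces, which is the genuinely non-formal input, and then to identify the localized homology by hand. Recall the general statement: for any $E_\infty$-space $N$ and any commutative ring $k$, the canonical map $N \to N^{grp} := \Omega B N$ induces an isomorphism $H_*(N;k)[\pi_0(N)^{-1}] \xrightarrow{\sim} H_*(N^{grp};k)$, obtained by localizing the $\pi_0(N)$-module $H_*(N;k)$ at the commutative multiplicative monoid $\pi_0(N)$ (McDuff--Segal, Quillen). I would apply this with $N = M = \sqcup_n B(\mathrm{GL}_n(R))$: this is an $E_\infty$-space under the block-sum operation, with $\pi_0(M) = \mathbf{N}$ as a commutative monoid, whose group completion is $\mathbf{Z}$; in particular $\pi_0(M^{grp}) = \mathbf{Z}$, and the group completion theorem gives $H_*(M^{grp}) \simeq H_*(M)[\pi_0(M)^{-1}]$.

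Next I would compute this localization explicitly. Since $\mathbf{N}$ is generated as a monoid by the class $[1] \in \pi_0(B(\mathrm{GL}_1(R)))$, inverting $\pi_0(M)$ amounts to inverting the single element $[1]$, i.e. inverting multiplication $\cdot[1] : H_*(M) \to H_*(M)$, which is induced by the block inclusions $\mathrm{GL}_n(R) \hookrightarrow \mathrm{GL}_{n+1}(R)$. Hence $H_*(M)[\pi_0(M)^{-1}]$ is the mapping telescope $\colim\big(H_*(M) \xrightarrow{\cdot[1]} H_*(M) \xrightarrow{\cdot[1]} \cdots\big)$, and since homology commutes with filtered colimits, in each ``external degree'' $m \in \mathbf{Z}$ this telescope computes $\colim_n H_*(B(\mathrm{GL}_n(R))) = H_*(B(\mathrm{GL}_\infty(R)))$. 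Thus $H_*(M)[\pi_0(M)^{-1}] \simeq \bigoplus_{m \in \mathbf{Z}} H_*(B(\mathrm{GL}_\infty(R))) = H_*(\mathbf{Z} \times B(\mathrm{GL}_\infty(R))) = H_*(M_\infty)$.

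Finally I would construct the comparison map and check it realizes this isomorphism. The composite of the $n$-th summand inclusion $B(\mathrm{GL}_n(R)) \hookrightarrow M \to M^{grp}$ with translation by $-[n] \in \pi_0(M^{grp})$ (a homotopy equivalence, as $M^{grp}$ is grouplike) lands in the basepoint component of $M^{grp}$ and is compatible with stabilization up to coherent homotopy; passing to the colimit yields $B(\mathrm{GL}_\infty(R)) \to M^{grp}$, and combining with the $\mathbf{Z}$-worth of translations assembles into $M_\infty = \mathbf{Z} \times B(\mathrm{GL}_\infty(R)) \to M^{grp}$, manifestly natural in $R$. On homology this map is, in each external degree, the colimit of the canonical maps $H_*(B(\mathrm{GL}_n(R))) \to H_*(M^{grp})$, which is precisely the localization isomorphism of the previous paragraph; hence $M_\infty \to M^{grp}$ is a homology equivalence.

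The hard part is the general group completion theorem itself --- the assertion that $H_*(\Omega B M)$ is the honest localization $H_*(M)[\pi_0^{-1}]$ and nothing larger --- which genuinely uses the homotopy-commutativity of $M$ (with its coherences) and is the one step I would import as a black box rather than reprove here. The only other point requiring care is organizing the stabilization maps and the comparison map coherently; this is handled cleanly by working throughout with the colimit over the groupoid of finite sets and injections (equivalently, with $\mathrm{GL}_\infty$ from the outset), so that no phantom subtleties arise and functoriality in $R$ is transparent.
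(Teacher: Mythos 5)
Your proposal is correct, but it packages the input differently from the paper. The paper's proof runs the McDuff--Segal argument directly on the telescope: it realizes $M_\infty$ as the colimit of $M \xrightarrow{T} M \xrightarrow{T} \cdots$, forms the bar-type map $M_\infty/M \to BM$, and invokes the homology-fibration criterion (with contractible total space $|M_\infty/M|$) to get the homology equivalence $M_\infty \to \Omega BM$ in one step; functoriality in $R$ is then automatic because the action and the bar construction are natural. You instead take as a black box the Pontryagin-ring localization form of the group completion theorem, $H_*(N)[\pi_0(N)^{-1}] \simeq H_*(\Omega BN)$, and then do the (standard, correct) deduction: since $\pi_0(M)=\mathbf{N}$ is generated by $[1]$, the localization is the mapping telescope along multiplication by $[1]$, i.e.\ along the block inclusions, which identifies it with $H_*(\mathbf{Z}\times B\mathrm{GL}_\infty(R))$; you then build the comparison map by translating the component maps $B\mathrm{GL}_n(R)\to M^{grp}$ by $-[n]$ and passing to the colimit. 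Both routes import a nontrivial McDuff--Segal-type statement, so neither is more elementary; the main practical difference is that your route requires the extra step of constructing the map $M_\infty \to M^{grp}$ and checking it is natural in $R$ and compatible with stabilization up to coherent homotopy (the translation-by-$-[n]$ choices), a point you rightly flag and which is routine since inverses in the grouplike target are canonical up to contractible choice, whereas in the paper's approach the map and its naturality come for free from the fibration. Your identification that the comparison map realizes the localization isomorphism on homology is the standard argument and is fine.
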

\begin{proof}
There is a natural action of the $E_\infty$-space $M$ on the space $M_\infty$: realize $M_\infty$ as the direct limit of $M \xrightarrow{T} M \xrightarrow{T} M \xrightarrow{T} ...$, where $T$ is the map obtained by ``adding a free $R$-module of rank $1$''. The  geometric realization of the resulting map $M_\infty/M \to BM$ of simplicial spaces is a homology fibration whose total space $|M_\infty/M|$ is contractible, which yields a homology equivalence $M_\infty \to \Omega BM$ (see \cite[Proposition 2]{McDuffSegal} for more precise statements, and Quillen's \cite[Appendix Q]{FriedlanderMazur} for another perspective).
\end{proof}

Using the group completion theorem, we can relate the cohomology of the $K$-theory presheaf to that of the stack of vector bundles quite directly:

\begin{corollary}[Cohomology of the $K$-theory presheaf]
\label{HomologyK}
Let $(\mathcal{C},\mathcal{O})$ be a locally ringed topos. Then there is a natural map
\[ \mathbf{Z} \times B(\mathrm{GL}_\infty(\mathcal{O}(-))) \to K(\mathcal{O}(-)) \]
of presheaves of spaces inducing a homology isomorphism locally on $\mathcal{C}$. In particular, for any abelian sheaf $A$ on $\mathcal{C}$, pullback along the above induces an isomorphism 
\[ R\Gamma(K(\mathcal{O}(-)), A) \simeq R\Gamma(\mathbf{Z} \times B(\mathrm{GL}_\infty(\mathcal{O}(-))),A).\]
\end{corollary}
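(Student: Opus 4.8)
The plan is to exhibit the natural map as a composite of two transformations of presheaves of spaces on a site for $\mathcal{C}$ --- the first coming from the group completion theorem, the second from the description of $K$-theory as the group completion of the groupoid of vector bundles --- and then to check that each induces an isomorphism on homology after sheafification, from which the statement about $R\Gamma(-,A)$ is formal.

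First I would construct the map. Applying Theorem~\ref{McDuffSegal} with $R=\mathcal{O}(U)$ as $U$ ranges over the objects of a site for $\mathcal{C}$, and using the functoriality in $R$ asserted there, produces a natural transformation
\[ \mathbf{Z} \times B(\mathrm{GL}_\infty(\mathcal{O}(-))) \longrightarrow \Omega B\bigl( \sqcup_n B(\mathrm{GL}_n(\mathcal{O}(-))) \bigr) \]
of presheaves of spaces, which objectwise is the homology equivalence $M_\infty \to M^{grp}$ of that theorem. Next, Construction~\ref{KVectPerf} gives --- objectwise in $U$, hence as a transformation of presheaves --- a map $\sqcup_n B(\mathrm{GL}_n(\mathcal{O}(-))) \to \mathrm{Vect}(\mathcal{O}(-))^{\simeq}$ of $E_\infty$-spaces (the inclusion of the free bundles) together with an identification $K(\mathcal{O}(-)) \simeq \Omega B(\mathrm{Vect}(\mathcal{O}(-))^{\simeq})$. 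Applying $\Omega B$ to the first of these and composing with the identification yields a transformation $\Omega B(\sqcup_n B(\mathrm{GL}_n(\mathcal{O}(-)))) \to K(\mathcal{O}(-))$; composing with the previous map gives the required natural map.

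Next I would verify that this composite is a homology isomorphism locally on $\mathcal{C}$, i.e. an isomorphism on sheafified homology. The first factor is an objectwise homology equivalence, so certainly is. For the second factor it suffices to check it becomes an equivalence after sheafification. Since $(\mathcal{C},\mathcal{O})$ is locally ringed, a finitely generated projective $\mathcal{O}$-module is locally free of finite rank --- its classifying idempotent is, locally on $\mathcal{C}$, conjugate to a coordinate projection, this being true over each (local) stalk --- so $\sqcup_n B(\mathrm{GL}_n(\mathcal{O}(-))) \to \mathrm{Vect}(\mathcal{O}(-))^{\simeq}$ becomes an equivalence after sheafification (it is fully faithful, and locally essentially surjective by the preceding sentence). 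Since sheafification $a$ is a left-exact left adjoint, it commutes with the bar construction (a colimit) and with the loop space (a finite limit), hence with $\Omega B$; applying $\Omega B$ to this local equivalence and using $K(\mathcal{O}(-)) = \Omega B(\mathrm{Vect}(\mathcal{O}(-))^{\simeq})$ shows the second factor is itself a local equivalence, in particular a local homology isomorphism. Hence the composite is a homology isomorphism locally on $\mathcal{C}$. I would emphasise that the composite is \emph{not} in general an objectwise homology equivalence: on $\pi_0$ it is the map $\mathbf{Z} \to K_0(\mathcal{O}(-))$ sending $1$ to $[\mathcal{O}]$, which need not be a bijection --- this is exactly why the conclusion is only local.

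Finally I would deduce the cohomology statement. For a presheaf of spaces $F$ on $\mathcal{C}$ and an abelian sheaf $A$, the cohomology $R\Gamma(F,A)$ depends only on the associated sheaf of connective spectra $a(\Sigma^\infty_+ F)$; explicitly $R\Gamma(F,A) \simeq \mathrm{Map}_{\mathrm{Shv}(\mathcal{C};\mathrm{Sp})}(a(\Sigma^\infty_+ F), A)$ with $A$ regarded as an Eilenberg--MacLane sheaf of spectra. An equivalence of connective sheaves of spectra is detected on homotopy sheaves, i.e. on sheafified homology, so the composite constructed above induces an equivalence on $a(\Sigma^\infty_+(-))$ and hence the asserted isomorphism $R\Gamma(K(\mathcal{O}(-)),A) \simeq R\Gamma(\mathbf{Z} \times B(\mathrm{GL}_\infty(\mathcal{O}(-))),A)$. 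The genuine mathematical content is entirely in the two cited inputs (the group completion theorem, and local freeness of finitely generated projectives over a locally ringed topos); the main obstacle is purely the bookkeeping forced by the fact that the natural map is only a \emph{local} homology equivalence, so that one must consistently phrase everything in terms of sheafified homology (equivalently, the associated sheaf of spectra) rather than objectwise homology.
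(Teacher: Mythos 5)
Your proposal follows essentially the same route as the paper: the paper's proof of the first assertion is the single sentence that it ``follows from Theorem~\ref{McDuffSegal} as $(\mathcal{C},\mathcal{O})$ is locally ringed,'' and what you spell out --- McDuff--Segal objectwise, plus the fact that over a locally ringed topos finitely generated projectives are locally free, so that $\sqcup_n B\mathrm{GL}_n(\mathcal{O}(-)) \to \mathrm{Vect}(\mathcal{O}(-))^{\simeq}$ is a local equivalence and sheafification commutes with $\Omega B$ --- is exactly the intended argument, and your deduction of the cohomology statement is the paper's step $(\ast)$ in suspension-spectrum clothing. Two small cautions. First, your justification of local freeness via stalks tacitly assumes $\mathcal{C}$ has enough points, which a general topos need not have; the locally ringed condition gives local freeness directly (locally on $\mathcal{C}$ some maximal minor of the classifying idempotent becomes invertible), so nothing is lost. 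Second, the sentence ``an equivalence of connective sheaves of spectra is detected on homotopy sheaves, i.e.\ on sheafified homology'' is doubly imprecise: the homotopy sheaves of $a(\Sigma^\infty_+ F)$ are sheafified \emph{stable homotopy}, not homology, and detecting equivalences on homotopy sheaves is a Whitehead-type statement that can fail in a non-hypercomplete topos. Fortunately you do not need the claimed equivalence on $a(\Sigma^\infty_+(-))$: since $A$ is an $H\mathbf{Z}$-module sheaf concentrated in a single degree (hence truncated), maps out of the cofibre $C$ of your comparison factor through $H\mathbf{Z}\otimes C$, whose homotopy sheaves vanish because the map is a local homology isomorphism, and an object with vanishing homotopy sheaves admits no nonzero maps to a truncated sheaf. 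This weaker statement is precisely the paper's argument $(\ast)$ applied to $\mathbf{Z}[X]\to\mathbf{Z}[Y]$, and with that one-line repair your proof is complete as written.
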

\begin{proof}
The first part follows from Theorem~\ref{McDuffSegal} as $(\mathcal{C}, \mathcal{O})$ is {\em locally} ringed. For the second part, it suffices to prove the following general statement:

\begin{itemize}
\item[$(\ast)$] If $X \to Y$ is a map of presheaves of spaces on $\mathcal{C}$ that induces a homology isomorphism locally on $\mathcal{C}$, then $R\Gamma(Y,A) \simeq R\Gamma(X,A)$ for any abelian sheaf $A$. 
\end{itemize}

The assumption implies that the map $\mathbf{Z}[X] \to \mathbf{Z}[Y]$ of $\mathcal{D}(\mathrm{Ab})$-valued presheaves induces an isomorphism on homology presheaves locally on $\mathcal{C}$, so its homotopy fibre $F$ is a $\mathcal{D}(\mathrm{Ab})$-valued presheaf whose homology is locally $0$ on $\mathcal{C}$. But then $F$ has no maps into truncated $\mathcal{D}(\mathrm{Ab})$-valued sheaves on $\mathcal{C}$ (such as $A[i]$ for any $i \in \mathbf{Z}$), so the claim follows.
\end{proof}

Putting the above ingredients together, we obtain our promised construction of Chern classes for perfect complexes:

\begin{construction}[Chern classes on the stack of perfect complexes]
\label{conscryschernperf}
For each $i > 0$,  we shall construct a natural (up to homotopy) map
\[ c_i:\mathrm{Perf}(-/k)^{\crys} \to \mathcal{O}_{\crys}[2i]\] 
of presheaves on $\mathrm{PDAlg}_{/W}$, yielding Chern classes for perfect complexes.  

First, let us construct a Chern class map from $K$-theory itself.  For any commutative ring $R$, write $K(-/R)$ be $K$-theory presheaf (Construction~\ref{KVectPerf}) on $R$-algebras. By Corollary~\ref{HomologyK} (applied with $\mathcal{O} = \overline{\mathcal{O}_\crys}$ and $A=\mathcal{O}_\crys$) and the fact that $B(\mathrm{GL}_\infty(-))$ sheafifies to $\mathrm{Vect}_\infty(-)$, we have a natural isomorphism
\[ R\Gamma( K(-/k)^\crys, \mathcal{O}_{\crys}) \simeq R\Gamma( \mathbf{Z} \times \mathrm{Vect}_{\infty}(-/k)^\crys, \mathcal{O}_{\crys})\]
of $E_\infty$-rings. On cohomology rings, this yields an identification
\[ H^*(K(-/k)^\crys, \mathcal{O}_{\crys}) \simeq H^*( \mathbf{Z} \times \mathrm{Vect}_{\infty}(-/k)^\crys, \mathcal{O}_{\crys}) = \prod_{\mathbf{Z}} k[c_1,c_2,...,c_n,....]\]
as graded commutative rings.  In particular, each $c_i$, viewed diagonally inside the target, yields a homotopy class of maps
\[ K(-/k)^{\crys} \to \mathcal{O}_{\crys}[2i]\]
of sheaves of $\infty$-groupoids on $\mathrm{PDAlg}_{/W}$.

On the other hand, there is also the universal  map $\mathrm{Perf}(-) \to K(-)$ of presheaves on commutative rings. Restricting to $k$-algebras, crystallizing, and composing with the above map yields a map
\[ c_i:\mathrm{Perf}(-/k)^\crys \to K(-/k)^\crys \to \mathcal{O}_{\crys}[2i]\]
of presheaves of spaces on $\mathrm{PDAlg}_{/W}$. Consequently, just as in Construction~\ref{ChernVect}, given any scheme $X$ equipped with a perfect complex $E$, the preceding map determines Chern classes $c_i(E) \in H^{2i}_\crys(X)$.

\end{construction}

\begin{remark}[Chern classes for perfect complexes in other cohomology theories]
\label{rmk:synChern}
In Construction~\ref{conscryschernperf}, we defined Chern classes for perfect complexes in crystalline cohomology. Beyond the classical theory of Chern classes for vector bundles, the essential inputs are that perfect complexes and vector bundles give rise to the same notion of $K$-theory after Zariski sheafification, and that crystalline cohomology is a Zariski sheaf of spectra. Thus, it adapts readily to other cohomology theories admitting Chern classes for vector bundles and satisfying Zariski descent as well, e.g., the syntomic Chern classes for vector bundles constructed in \cite{BhattLurieAPC} naturally extend to all perfect complexes by this procedure.
\end{remark}

\section{Vanishing of Chern classes of crystals in crystalline cohomology}
\label{sec:ChernClassCrys}

In this section, we prove the vanishing of Chern classes in crystalline cohomology for perfect complexes that underlie crystals (Theorem~\ref{CrysChernVanish}). Note that the case of locally free sheaves was previously shown by Esnault--Shiho \cite{EsnaultShihoChern}, and our argument uses similar ideas about classifying spaces for crystalline vector bundles; the key additional input comes from $K$-theory, enabling us to pass to perfect complexes. We shall use the notation and constructions used in \S \ref{sec:ChernCons}.

To begin, let us explain how to formulate the notion of a crystal of vector bundles or perfect complexes using the formalism of functors on $\mathrm{PDAlg}_{/W}$. The essential input is the following construction:

\begin{construction}[The $\sharp$-functor]
\label{SharpDef}
Any presheaf $X$ on $W$-algebras induces a presheaf $X^\sharp$ on $\mathrm{PDAlg}_{/W}$ via $X^\sharp(D,J) = X(D)$. If $X$ is a sheaf for the $p$-complete Zariski topology, then $X^\sharp$ is a sheaf for the Zariski topology. There is a natural map $\eta_X:X^\sharp \to (X_k)^\crys$ given on the functor of points by the natural map 
\[ X^\sharp(D,J ) := X(D) \to (X_k)^\crys(D,J) := X(D/J)\] 
for any $(D,J) \in \mathrm{PDAlg}_{/W}$.
\end{construction}

\begin{example}
When $X=\mathbf{G}_a$ over $W$, we have $X^\sharp = \mathcal{O}_\crys$.
\end{example}

We can now explain how to formulate the notion of a crystal using the above construction $Y \mapsto Y^\sharp$ applied to $Y=\mathrm{Vect}(-/W)$ being the stack of vector bundles over $W$.

\begin{remark}[Crystal structures via the $\sharp$ functor]
\label{CrystalStructureFunctor}
Fix a presheaf $X$ on $k$-algebras. 

\begin{enumerate}
\item {\em The groupoid of crystals of vector bundles:} The groupoid $\mathrm{Map}(X^\crys, \mathrm{Vect}(-/W)^\sharp)$ of maps $X^\crys \to \mathrm{Vect}(-/W)^\sharp$ identifies with the groupoid of crystals of vector bundles on $X$: specifying a map $X^\crys \to \mathrm{Vect}(-/W)^\sharp$ is equivalent to functorially specifying a bundle $E_D \in \mathrm{Vect}(D)$ for each $(D,J) \in \mathcal{C}_X$,  which is exactly the definition of a crystal of vector bundles on the big crystalline site of $X/W$.

\item {\em Crystal structures on a given vector bundle:} Given a vector bundle $E$ with classifying map  $[E]:X \to \mathrm{Vect}_n(-/k)$, the groupoid of factorizations of $[E]^\crys:X^\crys \to \mathrm{Vect}_n(-/k)^\crys$ through $\eta_{\mathrm{Vect}(-/W)}:\mathrm{Vect}(-/W)^\sharp \to \mathrm{Vect}(-/k)^\crys$ is identified with the groupoid of crystal structures on $E$. Indeed,  the data of such a factorization amounts to the following: for each pair $(D,J) \in \mathrm{PDAlg}_{/W}$ and a map $\alpha:\mathrm{Spec}(D/J) \to X$ (i.e., an object of $\mathcal{C}_X$), one must functorially specify a lift of $\alpha^*(E) \in \mathrm{Vect}(D/J)$ along $D \to D/J$; this is exactly the data of a lift of $E$ to a crystal on the big crystalline site of $X/W$. 
\end{enumerate}

A similar comment applies to $\mathrm{Vect}(-/W)$ replaced by other functors on $W$-algebras (e.g., the stack $\mathrm{Perf}(-/W)$ of perfect complexes). 
\end{remark}

To effectively use the above reformulations in applications, we shall need to understand the  cohomology of the $\sharp$-construction; this is recorded next for the cases of interest:

\begin{proposition}
\label{cohcohstack}
Let $X$ be a presheaf on $W$-algebras which admits a hypercover in the \'etale topology by $W$-schemes with (locally) bounded $p^\infty$-torsion.  Then the complex $R\Gamma(X^\sharp,\mathcal{O}_\crys)$ identifies naturally with $R\Gamma(X,\mathcal{O}_X)^{\wedge}$.
\end{proposition}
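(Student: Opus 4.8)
The plan is to turn the left-hand side into an ordinary cohomology computation on $W$-algebras via an adjunction, and then to reduce to the affine case by descent along the given hypercover. Write $v \colon \mathrm{PDAlg}_{/W} \to (W\text{-algebras})$ for the forgetful functor $(D,J) \mapsto D$. By construction $(-)^\sharp$ is precomposition with $v$, so, as a functor on presheaves of spaces (equivalently, of complexes), it has a right adjoint $v_{*} = \mathrm{Ran}_{v}$; unwinding the definition of $R\Gamma$ of a ringed presheaf then gives a natural identification $R\Gamma(X^\sharp, \mathcal{O}_\crys) \simeq R\Gamma(X, v_{*}\mathcal{O}_\crys)$ (once $v_{*}\mathcal{O}_\crys$ has been computed and seen to be a sheaf). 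So everything reduces to identifying the presheaf $v_{*}\mathcal{O}_\crys$ on $W$-algebras.

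The crucial claim is that $v_{*}\mathcal{O}_\crys$ is the $p$-completion of the structure sheaf. By the pointwise formula for the right Kan extension, $v_{*}\mathcal{O}_\crys(R) \simeq \lim_{(D,J) \in \mathcal{C}_R} D$, where $\mathcal{C}_R$ is the comma category of pairs $\big((D,J) \in \mathrm{PDAlg}_{/W},\, R \to D\big)$ with PD-$R$-algebra maps as morphisms. Here I would argue that the functor $\mathbf{N}^{\mathrm{op}} \to \mathcal{C}_R$ sending $n$ to $(R/p^{n}, pR/p^{n})$, with its canonical divided powers and the evident surjective transition maps, is coinitial (so that limits may be computed along it): any $(D,J) \in \mathcal{C}_R$ is $p$-nilpotent, say with $p^{N}D = 0$, so the structure map $R \to D$ factors uniquely through $R/p^{n}$ for each $n \geq N$, this factorization is automatically a PD-map (its image lies in $pD \subseteq J$ and the comparison of divided powers reduces to the canonical one on $pD$), and the category of such factorizations is the poset of $n \geq N$, which has a terminal object and is therefore weakly contractible. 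Consequently $v_{*}\mathcal{O}_\crys(R) \simeq R\lim_{n} R/p^{n} = \widehat{R}$, the classical $p$-completion, with no $\lim^{1}$ obstruction since the transition maps are surjective. The one non-formal input is that $pR/p^{n}$ really does carry a divided-power structure compatible with the canonical one, i.e. that $p^{k}/k! \in pW$ for all $k \geq 1$, which follows from $W$ being $p$-torsion-free.

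It then remains to identify $\widehat{R}$ with $R\Gamma(\Spec R, \mathcal{O})^{\wedge}$ and to globalize. When $R$ has bounded $p^{\infty}$-torsion the classical completion $\widehat{R}$ coincides with the derived $p$-completion $R^{\wedge}_{p} = R\Gamma(\Spec R, \mathcal{O})^{\wedge}$, so after restricting along $W$-schemes that are locally of bounded $p^{\infty}$-torsion the presheaf $v_{*}\mathcal{O}_\crys$ agrees with the derived $p$-completion $\mathcal{O}^{\wedge}_{p}$ of the structure sheaf; in particular it is a sheaf there, with $R\Gamma(U, v_{*}\mathcal{O}_\crys) \simeq R\Gamma(U, \mathcal{O}_{U})^{\wedge}$ for each such $U$ (using that $R\Gamma(U,-)$ commutes with the limit and cofibers defining $(-)^{\wedge}_{p}$). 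Finally I would invoke the given étale hypercover $U^{\bullet} \to X$ by such $W$-schemes: since $v_{*}\mathcal{O}_\crys$ is a sheaf and $X$ is its geometric realization after sheafification, $R\Gamma(X^\sharp, \mathcal{O}_\crys) \simeq R\Gamma(X, v_{*}\mathcal{O}_\crys) \simeq \lim_{j} R\Gamma(U^{j}, v_{*}\mathcal{O}_\crys)$, while $p$-completion commutes with limits so $R\Gamma(X, \mathcal{O}_{X})^{\wedge} \simeq \lim_{j} R\Gamma(U^{j}, \mathcal{O}_{U^{j}})^{\wedge}$; the two limits agree termwise by the previous sentence.

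The main obstacle is the coinitiality statement used to compute $v_{*}\mathcal{O}_\crys$: one must check that the manageable tower $\{R/p^{n}\}$ is genuinely cofinal inside the large and somewhat unwieldy category $\mathcal{C}_R$ of all $p$-nilpotent PD-$R$-algebras over $R$, which rests on the two structural features that every divided-power ideal in the site contains $p$ and that $p^{k}/k! \in pW$. Once that is in hand, the remaining points --- correctly bookkeeping the Kan-extension adjunction (in particular the variance convention for ``presheaves on $\mathrm{PDAlg}_{/W}$''), the hypercover descent, and the classical-versus-derived $p$-completion comparison that is the sole reason the bounded-$p^{\infty}$-torsion hypothesis is needed --- are routine.
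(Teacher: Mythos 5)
Your proposal is correct and is essentially the paper's proof in different packaging: evaluating the right Kan extension $v_*\mathcal{O}_\crys$ pointwise is exactly the paper's affine computation (cofinality of the tower $\{(R/p^nR, pR/p^nR)\}$ inside the category of PD-pairs under $R$, giving $\lim_n R/p^n \simeq R^{\wedge}$, with bounded $p^\infty$-torsion identifying classical and derived completion), and your hypercover/descent globalization is the paper's reduction to $X=\mathrm{Spec}(R)$ via the facts that $(-)^\sharp$ preserves \'etale covers and finite limits. One small nitpick: the existence of the canonical divided powers on $pR/p^n$ compatible with those on $pW$ requires checking well-definedness of $\gamma_k(px)=(p^k/k!)x^k$ when $R/p^n$ has $p$-torsion, which is more than "$p^k/k!\in pW$"; this is the standard extension lemma the paper cites from the Stacks project (Tag 07H1), so it is a citation issue rather than a gap.
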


A typical $X$ allowed in the proposition is a $W$-flat algebraic stack: the Cech nerve of a smooth atlas is a hypercover with the required properties.

\begin{proof}
As divided power structures lift uniquely along \'etale covers by \cite[Tag 07H1]{StacksProject}, the construction carrying $X$ to $X^\sharp$ preserves \'etale covers. Moreover, this construction commutes with finite limits by definition. Thus, it suffices to prove the claim for $X = \mathrm{Spec}(R)$ with $R$ being a $W$-algebra with bounded $p$-torsion, i.e., we must show that $R\Gamma(\mathrm{Spec}(R)^\sharp, \mathcal{O}_\crys) \simeq R^{\wedge}$, where the target is the derived $p$-completion of $R$ (which coincides with the classical $p$-completion by the boundedness assumption). By definition, we have $R\Gamma(\mathrm{Spec}(R)^\sharp, \mathcal{O}_\crys) = \lim_{\mathcal{C}'_R} D$, where $\mathcal{C}'_R$ is the category of PD-pairs $(D,J) \in \mathrm{PDAlg}_{/W}$ equipped with a map $R \to D$ of $W$-algebras.  But $\mathcal{C}'_R$ has a final collection of objects given by $(R/p^n R, pR/p^nR)$ thanks to \cite[Tag 07H1]{StacksProject}, so $\lim_{\mathcal{C}'_R} D \simeq \lim_n R/p^n R \simeq R^{\wedge}$, as wanted.
\end{proof}

Using the preceding reformulation of the notion of a crystal, the following is the key lemma that ultimately gives our desired vanishing theorem for Chern classes:

\begin{lemma}
\label{KeyVanishingChernCrys}
For each $i > 0$, the composition 
\[ \mathrm{Perf}(-/W)^\sharp \xrightarrow{\eta_{\mathrm{Perf}(-/W)}} \mathrm{Perf}(-/k)^\crys \xrightarrow{c_i} \mathcal{O}_{\crys}[2i]\] 
is  null-homotopic.
\end{lemma}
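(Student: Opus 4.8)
The plan is to reduce the statement to the de Rham cohomology of the stack of perfect complexes over $W$, where the relevant cohomology is computed by Theorem~\ref{DeligneCalc} together with the group completion theorem, and where the key point is that the Chern class map factors through something that lives over $W$ rather than over $k$.

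\textbf{Step 1: Identify the composite in terms of a map on $\mathrm{Perf}(-/W)^\sharp$.} By Construction~\ref{conscryschernperf}, the map $c_i:\mathrm{Perf}(-/k)^\crys \to \mathcal{O}_\crys[2i]$ factors through $K(-/k)^\crys$, where the Chern class on $K$-theory is pulled back from $\mathrm{Vect}_\infty(-/k)^\crys$ via the group-completion identification of Corollary~\ref{HomologyK}. Precomposing with $\eta_{\mathrm{Perf}(-/W)}$ and using the naturality of the universal map $\mathrm{Perf}(-) \to K(-)$ and of the group completion construction in the ring, the composite in question is the pullback of $c_i$ along a map $\mathrm{Perf}(-/W)^\sharp \to K(-/W)^\sharp$ followed by the $\sharp$-version of the $K$-theory Chern class. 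Thus it suffices to show that $c_i \in H^{2i}(K(-/W)^\sharp, \mathcal{O}_\crys)$ pulled back from $K(-/k)^\crys$ already vanishes on $\mathrm{Perf}(-/W)^\sharp$ — or, more cleanly, that the composite $\mathrm{Perf}(-/W)^\sharp \to K(-/W)^\sharp \to \mathcal{O}_\crys[2i]$ is null.

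\textbf{Step 2: Compute $R\Gamma((\cdot)^\sharp, \mathcal{O}_\crys)$ and reduce to the $\mathrm{Vect}_\infty$ case over $W$.} Apply Corollary~\ref{HomologyK} to the locally ringed topos $(\mathcal{C},\mathcal{O}) = (\mathrm{Shv}(\mathrm{PDAlg}_{/W}), \mathcal{O}_\crys)$: this is legitimate because $\mathcal{O}_\crys$ is the structure sheaf of a (locally) ringed topos on $\mathrm{PDAlg}_{/W}$, so the group completion map $\mathbf{Z} \times B(\mathrm{GL}_\infty(\mathcal{O}_\crys(-))) \to K(\mathcal{O}_\crys(-))$ is a local homology equivalence, and $B(\mathrm{GL}_\infty(\mathcal{O}_\crys(-)))$ sheafifies to $\mathrm{Vect}_\infty(-/W)^\sharp$ by Construction~\ref{SharpDef} applied to $\mathrm{Vect}_\infty(-/W)$. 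Hence $R\Gamma(K(-/W)^\sharp, \mathcal{O}_\crys) \simeq R\Gamma(\mathbf{Z}\times \mathrm{Vect}_\infty(-/W)^\sharp, \mathcal{O}_\crys)$, and by Proposition~\ref{cohcohstack} the latter is $\prod_{\mathbf{Z}} R\Gamma(\mathrm{Vect}_\infty(-/W), \mathcal{O})^{\wedge}$. Now invoke Theorem~\ref{DeligneCalc}(1): $H^{>0}(\mathrm{Vect}_n(-/W), \mathcal{O}) = 0$ for every $n$, so $H^{>0}(\mathrm{Vect}_\infty(-/W), \mathcal{O})^{\wedge} = 0$, and therefore $H^{2i}(K(-/W)^\sharp, \mathcal{O}_\crys) = 0$ for $i > 0$. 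In particular the class pulling back to give $c_i$ on $\mathrm{Perf}(-/W)^\sharp$ is already zero at the level of $K(-/W)^\sharp$, which forces the composite through $\mathrm{Perf}(-/W)^\sharp$ to be null-homotopic — here one uses that the Chern class maps were constructed at the space level, so vanishing of the cohomology class is vanishing of the map up to homotopy (the target $\mathcal{O}_\crys[2i]$ is an Eilenberg–MacLane object, so $[\mathrm{Perf}(-/W)^\sharp, \mathcal{O}_\crys[2i]] = H^{2i}(\mathrm{Perf}(-/W)^\sharp, \mathcal{O}_\crys)$, and this receives the vanishing class via the factorization through $K(-/W)^\sharp$).

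\textbf{Step 3: Check the compatibility of the two Chern classes.} The one genuinely careful point — and the main obstacle — is Step 1: one must verify that the composite $c_i \circ \eta_{\mathrm{Perf}(-/W)}$ really does factor through the $\sharp$-side, i.e. that the square relating $\eta$ for $\mathrm{Perf}$, $\eta$ for $\mathrm{Vect}_\infty$ (equivalently $K$), and the universal Chern classes commutes. This follows because all the constructions in Construction~\ref{conscryschernperf} (the map $\mathrm{Perf}(-) \to K(-)$, the group completion $\sqcup_n B\mathrm{GL}_n(-) \to K(-)$, and the Chern classes on $\mathrm{Vect}_\infty$) are natural transformations of functors on commutative rings, so applying $(-)^\sharp$ on the $W$-side and $(-)^\crys$ on the $k$-side and using that $\eta_{(\cdot)}$ is natural in the presheaf gives the desired commuting diagram; the universal Chern class on $\mathrm{Vect}_\infty(-/k)^\crys$ is, by its very definition in Construction~\ref{ChernVect}, pulled back from the $W$-lift $\mathrm{Vect}_\infty(-/W)$, so its pullback to $\mathrm{Vect}_\infty(-/W)^\sharp$ is the $\sharp$-image of the de Rham class, which lives in $H^{2i}(\mathrm{Vect}_\infty(-/W), \mathcal{O})^{\wedge} = 0$. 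Once this diagram chase is in place, the vanishing is immediate from Step 2, and the lemma follows.
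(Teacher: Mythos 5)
Your proposal is correct and follows essentially the same route as the paper: factor the composite through $K(-/W)^\sharp$ using naturality of $\eta$ and of $\mathrm{Perf}(-)\to K(-)$, then kill all maps $K(-/W)^\sharp \to \mathcal{O}_\crys[2i]$ for $i>0$ via Corollary~\ref{HomologyK}, Proposition~\ref{cohcohstack}, and the vanishing $H^{>0}(\mathrm{Vect}_n(-/W),\mathcal{O})=0$ from Theorem~\ref{DeligneCalc}. The only cosmetic difference is that the paper phrases the final step as the null-homotopy of \emph{every} map $K(-/W)^\sharp \to \mathcal{O}_\crys[j]$, $j>0$, which is exactly the cohomology vanishing you establish in Step 2.
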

\begin{proof}
The composition under consideration factors as
\[ \mathrm{Perf}(-/W)^\sharp \xrightarrow{\eta_{\mathrm{Perf}(-/W)}} \mathrm{Perf}(-/k)^\crys \xrightarrow{\mathrm{can}^\crys}  K(-/k)^\crys \xrightarrow{c_i} \mathcal{O}_{\crys}[2i]\]
by construction of Chern classes for perfect complexes. By functoriality of the map $\eta$ in Construction~\ref{SharpDef}, it also factors as 
\[ \mathrm{Perf}(-/W)^\sharp \xrightarrow{\mathrm{can}^\sharp} K(-/W)^\sharp \xrightarrow{\eta_{K(-/k)}} K(-/k)^\crys \xrightarrow{c_i} \mathcal{O}_{\crys}[2i].\]
It therefore suffices to show that all maps $K(-/W)^\sharp \to \mathcal{O}_{\crys}[j]$ are null-homotopic for $j > 0$. Using Corollary~\ref{HomologyK},  this amounts to showing that $H^j R\Gamma(\mathbf{Z} \times \mathrm{Vect}_{\infty}(-/W)^\sharp, \mathcal{O}_\crys) = 0$ for $j >0$. By Proposition~\ref{cohcohstack} and the Kunneth formula, it is enough to show that $H^j R\Gamma(\mathrm{Vect}_{\infty}(-/W), \mathcal{O})^{\wedge} = 0$. As $\mathrm{Vect}_\infty(-/W) = \colim_n \mathrm{Vect}_n(-/W)$, it is enough to note that $W \simeq H^*(\mathrm{Vect}_n(-/W), \mathcal{O})$ by Theorem~\ref{DeligneCalc}.
\end{proof}

Our main theorem in crystalline cohomology is the following:

\begin{theorem}[Chern classes of crystals vanish]
\label{CrysChernVanish}
Let $X$ be a $k$-scheme. If a perfect complex $E$ on $X$ lifts to a crystal of perfect complexes on $(X/W)_\crys$, then $c_i(E) = 0$ for $i > 0$.
\end{theorem}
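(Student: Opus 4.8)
The plan is to reduce the statement to Lemma~\ref{KeyVanishingChernCrys} by feeding in the functorial description of crystal structures from Remark~\ref{CrystalStructureFunctor}. The only real content has already been isolated in that lemma and in the $K$-theoretic machinery of \S\ref{sec:ChernCons}; what remains is a formal manipulation of maps of presheaves on $\mathrm{PDAlg}_{/W}$.

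First I would unwind the hypothesis. By the version of Remark~\ref{CrystalStructureFunctor}(2) with $\mathrm{Vect}(-/W)$ replaced by the stack $\mathrm{Perf}(-/W)$ of perfect complexes (the ``similar comment'' at the end of that remark), the datum of a lift of $E$ to a crystal of perfect complexes on $(X/W)_\crys$ is precisely the datum of a factorization of the crystallized classifying map $[E]^\crys : X^\crys \to \mathrm{Perf}(-/k)^\crys$ through $\eta_{\mathrm{Perf}(-/W)} : \mathrm{Perf}(-/W)^\sharp \to \mathrm{Perf}(-/k)^\crys$; that is, a map $\alpha_E : X^\crys \to \mathrm{Perf}(-/W)^\sharp$ of presheaves of spaces on $\mathrm{PDAlg}_{/W}$ together with an identification $\eta_{\mathrm{Perf}(-/W)} \circ \alpha_E \simeq [E]^\crys$.

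Next I would recall from Construction~\ref{conscryschernperf} that, by definition, $c_i(E) \in H^{2i}_\crys(X)$ is the homotopy class of the composite
\[ X^\crys \xrightarrow{[E]^\crys} \mathrm{Perf}(-/k)^\crys \xrightarrow{c_i} \mathcal{O}_\crys[2i]. \]
Substituting the factorization of $[E]^\crys$ obtained above, this composite agrees with
\[ X^\crys \xrightarrow{\alpha_E} \mathrm{Perf}(-/W)^\sharp \xrightarrow{\eta_{\mathrm{Perf}(-/W)}} \mathrm{Perf}(-/k)^\crys \xrightarrow{c_i} \mathcal{O}_\crys[2i]. \]
But the last two arrows compose to a null-homotopic map by Lemma~\ref{KeyVanishingChernCrys}, so the whole composite is null-homotopic; hence $c_i(E) = 0$ in $H^{2i}_\crys(X)$ for all $i > 0$.

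I do not expect a serious obstacle here: all the weight has been transferred to Lemma~\ref{KeyVanishingChernCrys} (which in turn rests on the group-completion theorem via Corollary~\ref{HomologyK}, the cohomology computation of Proposition~\ref{cohcohstack}, and Deligne's calculation Theorem~\ref{DeligneCalc}). The one point requiring a little care is that Remark~\ref{CrystalStructureFunctor} is stated for the \emph{groupoid}-valued stack $\mathrm{Vect}$, so I would make explicit that the same bookkeeping goes through for the $\infty$-groupoid-valued stack $\mathrm{Perf}$ — i.e.\ that ``crystal of perfect complexes'' is by definition the limit $\lim_{\mathcal{C}_X}\mathrm{Perf}(D)$ appearing in \S\ref{ss:CrysFun}, and that a crystal structure on a fixed $E$ is exactly a lift of the point $[E]^\crys$ of $\mathrm{Perf}(-/k)^\crys$ along $\eta_{\mathrm{Perf}(-/W)}$ — but this is immediate from the definitions, since $\eta_{\mathrm{Perf}(-/W)}$ is the map induced on functors of points by $\mathrm{Perf}(D) \to \mathrm{Perf}(D/J)$.
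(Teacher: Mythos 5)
Your proposal is correct and is essentially identical to the paper's own proof: both encode the crystal structure on $E$ as a factorization of $[E]^\crys$ through $\eta_{\mathrm{Perf}(-/W)}:\mathrm{Perf}(-/W)^\sharp \to \mathrm{Perf}(-/k)^\crys$ via Remark~\ref{CrystalStructureFunctor} (in its $\mathrm{Perf}$ variant) and then conclude from the null-homotopy of Lemma~\ref{KeyVanishingChernCrys}. Your extra remark about extending the bookkeeping from $\mathrm{Vect}$ to $\mathrm{Perf}$ is exactly the "similar comment" the paper invokes, so nothing is missing.
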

\begin{proof}
A perfect complex on $X$ gives a map $[E]:X \to \mathrm{Perf}(-/k)$ and hence a map $[E]^\crys:X^\crys \to \mathrm{Perf}(-/k)^\crys$; composing with $c_i:\mathrm{Perf}(-/k)^\crys \to \mathcal{O}_{\crys}[2i]$ gives us $c_i(E)$. Specifying a lift of $E$ to a crystal of perfect complexes on $X$ relative to $W$ is equivalent to factoring the map $[E]^\crys$ as a map $X^\crys \to \mathrm{Perf}(-/W)^\sharp \to \mathrm{Perf}(-/k)^\crys$ (see Remark~\ref{CrystalStructureFunctor}). The vanishing claim now follows from Lemma~\ref{KeyVanishingChernCrys}.
\end{proof}

Theorem~\ref{CrysChernVanish} implies, in particular, that the Chern classes of any crystal of locally free sheaves on $(X/W)_\crys$ must vanish. In practice, the local freeness is restrictive, and a more flexible class of objects is provided by torsionfree lattices in isocrystals\footnote{As pointed out in \cite{EsnaultShihoChern}, it is not known whether every isocrystal on a smooth $k$-scheme possesses a locally free lattice; while there are some situations where this holds true for non-obvious geometric reasons (see, e.g., \cite[Theorem 3.5 and \S 4]{EsnaultShihoConv}), we suspect it is not true in general.}, or equivalently by a $p$-torsionfree crystal in coherent sheaves. We can deduce a vanishing result of such crystals from Theorem~\ref{CrysChernVanish} by a slight elaboration of the fact that any coherent sheaf on a smooth variety is a perfect complex:

\begin{corollary}
\label{IsoCrysChernVanish}
Let $X$ be a smooth $k$-scheme.  Let $E$ be a $p$-torsionfree crystal in  coherent sheaves on $X$. Then $c_i(E(X)) = 0$ for $i > 0$.
\end{corollary}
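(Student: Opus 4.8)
The plan is to reduce Corollary~\ref{IsoCrysChernVanish} to Theorem~\ref{CrysChernVanish} by showing that a $p$-torsionfree crystal $E$ in coherent sheaves on a smooth $k$-scheme $X$ is in fact a crystal of perfect complexes, whose underlying perfect complex on $X$ is $E(X)$ itself. The point is that ``crystal in coherent sheaves'' is a condition imposed objectwise over the big crystalline site, whereas ``crystal of perfect complexes'' asks for perfectness of each $E_{(D,J)}$ as a $D$-complex. The two notions would be immediately comparable if every PD-thickening $(D,J)$ appearing in $\mathcal{C}_X$ had the property that coherent sheaves on $\mathrm{Spec}(D)$ are perfect, i.e. that $D$ is (locally) regular and Noetherian; but this fails in general, so the argument must proceed more carefully.

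First I would recall that, since $X/k$ is smooth, $E(X)$ is a coherent sheaf on a regular Noetherian scheme and hence a perfect complex on $X$; this gives a map $[E(X)]: X \to \mathrm{Perf}(-/k)$. The real content is to promote the crystal structure on $E$ to a factorization of $[E(X)]^\crys$ through $\mathrm{Perf}(-/W)^\sharp$ — equivalently (Remark~\ref{CrystalStructureFunctor}), to a crystal of perfect complexes lifting $E(X)$. Concretely, I must check that for each $(D,J) \in \mathcal{C}_X$ the $D$-module $E_{(D,J)}$ is perfect as a complex of $D$-modules. I would do this by the standard dévissage of the crystal site: it suffices to check perfectness on a collection of $(D,J)$ that is ``large enough'' to compute both the cohomology and the category of crystals. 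For $X$ smooth one may restrict to PD-envelopes $(D,J)$ attached to closed immersions $U \hookrightarrow Z$ of an affine open $U \subseteq X$ into a smooth affine $W$-scheme $Z$; here $D$ is the $p$-completed PD-envelope, which is $W$-flat and Noetherian, and the relevant module is $E_{(D,J)}$, a finitely generated $D$-module. The $p$-torsionfreeness of the crystal ensures $E_{(D,J)}$ is $p$-torsionfree, hence $W$-flat, and since $D$ is regular (being, up to completion, a localization of a polynomial ring over the regular ring $W$) finitely generated $D$-modules are automatically perfect. This is the step I expect to be the main obstacle: one must make precise which generating family of PD-thickenings to use and verify both the regularity/flatness of the corresponding $D$'s and that restricting the inverse limit defining $\mathrm{Perf}(X^\crys,\mathcal{O}_\crys)$ to this family loses nothing — i.e. a compatible family of perfect complexes over this cofinal-enough family glues to a genuine crystal of perfect complexes. (Alternatively, one can invoke the fact from \cite{EsnaultShihoChern} that a $p$-torsionfree coherent crystal on a smooth $X$ is, crystal-locally, a lattice in an isocrystal and hence built from $W$-flat finitely generated modules over regular PD-rings.)

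Once this reduction is in place, the conclusion is immediate: the crystal of perfect complexes lifting $E(X)$ gives, via Remark~\ref{CrystalStructureFunctor}, a factorization
\[
[E(X)]^\crys : X^\crys \longrightarrow \mathrm{Perf}(-/W)^\sharp \xrightarrow{\ \eta_{\mathrm{Perf}(-/W)}\ } \mathrm{Perf}(-/k)^\crys,
\]
and composing with $c_i$ and applying Lemma~\ref{KeyVanishingChernCrys} (the composite $c_i \circ \eta_{\mathrm{Perf}(-/W)}$ is null-homotopic) forces $c_i(E(X)) = 0$ for $i > 0$. Equivalently, one simply applies Theorem~\ref{CrysChernVanish} to the perfect complex $E(X)$ together with its crystal-of-perfect-complexes lift. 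The smoothness of $X$ is used twice: to know $E(X)$ is perfect on $X$ to begin with, and to arrange the PD-thickenings over which the crystal is evaluated to be regular so that the objectwise coherent sheaves are objectwise perfect.
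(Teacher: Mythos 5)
Your overall strategy --- promote $E$ to a crystal of perfect complexes whose value on $X$ is $E(X)$ and then apply Theorem~\ref{CrysChernVanish} --- is exactly the paper's, but the step you yourself single out as the main obstacle is carried out incorrectly, and the genuine difficulty is left untouched. First, the $p$-completed PD-envelope $D$ of an embedding $U \hookrightarrow Z$ into a smooth affine $W$-scheme is \emph{not} Noetherian or regular: adjoining divided powers is not a localization of a polynomial ring; for instance $D/p$ contains the classes of the $x^{[i]}$ with $(x^{[i]})^p = 0$ and requires infinitely many algebra generators. So ``finitely generated $D$-modules are automatically perfect'' is false for these rings; to the extent the values of $E$ on such envelopes are perfect, it is because they are $p$-completely flat base changes of the perfect $\widetilde{R}$-module $E_{\widetilde{R}}$ along $\widetilde{R} \to D$ (with $\widetilde{R}$ a smooth lift, which \emph{is} regular), not because of any regularity of $D$. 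Worse, objectwise perfectness over \emph{all} of $\mathcal{C}_X$ --- which is what an object of $\mathrm{Perf}(X^\crys,\mathcal{O}_\crys) = \lim_{\mathcal{C}_X}\mathrm{Perf}(D)$ literally demands --- should be expected to fail: evaluating at an artinian PD point such as $(W_n(k'),(p))$ lying over a point of $X$ where $E$ is not locally free (such crystals exist, as recalled in the introduction) gives the underived base change $E_{\widetilde{R}} \otimes_{\widetilde{R}} W_n(k')$, a finitely generated $W_n(k')$-module which is perfect only if it is free, and nothing in your argument addresses such objects.

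Second, the gap you flag (``restricting the inverse limit \dots loses nothing'') is precisely where the content of the corollary lies, and it cannot be settled by cofinality alone: the coherent crystal has \emph{underived} base-change transition maps, whereas a crystal of perfect complexes has derived ones, so one must construct a new, derived object $\widetilde{E}$ --- with $\widetilde{E}_{(D,J)}$ the derived base change, whose $H^0$ recovers $E_{(D,J)}$ --- rather than reinterpret $E$ itself. This is what the paper does: it shows that taking $H^0$ identifies connective perfect crystals $F$ with $F(X)$ discrete with $p$-torsionfree coherent crystals, by working over the PD \v{C}ech nerve $D^\bullet$ of a smooth lift $\widetilde{R}$, using flatness of the face maps, regularity of $\widetilde{R}$, and derived Nakayama; the $p$-torsionfreeness of $E$ enters exactly to guarantee that the \emph{derived} evaluation $E_{\widetilde{R}} \otimes^L_{\widetilde{R}} R$ is concentrated in degree $0$, so that $\widetilde{E}(X) = E(X)$ and Theorem~\ref{CrysChernVanish} applies to the right class. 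Your proposal never engages with this derived-versus-underived evaluation point, which is the heart of the reduction.
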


The proof below looks notationally complicated, partially as we have not assumed that $X$ comes embedded into projective space.  The arguments becomes particularly transparent in the stacky-approach, see Remark~\ref{StackyPerftoCoh}.

\begin{proof}
By Theorem~\ref{CrysChernVanish}, it is enough to show that $E$ promotes naturally to a crystal $\widetilde{E}$ of perfect complexes on $X$ such that $E(X) = \widetilde{E}(X)$. Recall that the relevant categories of crystals are defined as 
\[ \mathrm{Perf}(X^\crys,\mathcal{O}_\crys) = \lim_{\mathcal{C}_X} \mathrm{Perf}(D) \quad \text{and} \quad  \mathrm{Coh}(X^\crys,\mathcal{O}_\crys) = \lim_{\mathcal{C}_X} \mathrm{Mod}^{fp}(D).\]
We shall consider the following subcategory of the LHS:
\[  \mathrm{Perf}^{cn}(X^\crys,\mathcal{O}_\crys) = \lim_{\mathcal{C}_X} \mathrm{Perf}^{cn}(D),\]
where $\mathrm{Perf}^{cn}(D) \subset \mathrm{Perf}(D)$ refers to the full subcategory of connective perfect complexes. Taking $H^0$ of a connective perfect complex gives a finitely presented module, and this process commutes with base change, so we have a natural functor
\[ h:\mathrm{Perf}^{cn}(X^\crys,\mathcal{O}_\crys) \to \mathrm{Coh}(X^\crys, \mathcal{O}_\crys).\]
Our given $p$-torsionfree crystal $E$ is an object of the right hand side. Any lift $E'$ of $E$ along $h$ must satisfy $H^0(E'(X)) = E(X)$ by connectivity of $E'$. Thus, it suffices to show the following:
\begin{itemize}
\item[$(\ast)$] 
The functor $h$ identifies the full subcategory $\mathcal{C} \subset  \mathrm{Perf}^{cn}(X^\crys, \mathcal{O}_\crys)$ spanned those $F$'s such that $F(X)$ is concentrated in degree $0$ with the full subcategory of $\mathrm{Coh}(X^\crys, \mathcal{O}_\crys)$ spanned by $p$-torsionfree crystals.
\end{itemize}
This assertion can be checked locally and after making choices, so we may  assume $X=\mathrm{Spec}(R)$ is a smooth affine $k$-scheme equipped with a $p$-completely smooth $W$-algebra $\widetilde{R}$ lifting $R$. Let $D^\bullet$ be the $p$-completed pd-envelope of the diagonal in the Cech nerve of $W \to \widetilde{R}$; by smoothness, the face maps in $D^\bullet$ are $p$-completely flat, and hence each face map $\widetilde{R} = D^0 \to D^i$ is honestly flat as $\widetilde{R}$ is noetherian.  Moreover,  standard arguments in the crystalline theory show that evaluation of crystals yields equivalences
\[ r_{\mathrm{Perf}^{cn}}: \mathrm{Perf}^{cn}(X^\crys, \mathcal{O}_\crys) \simeq \lim \mathrm{Perf}^{cn}(D^\bullet) \quad \text{and} \quad r_{\mathrm{Coh}}: \mathrm{Coh}(X^\crys, \mathcal{O}_\crys) \simeq \lim \mathrm{Mod}^{fp}(D^\bullet).\]
Under the first equivalence, evaluating $F \in  \mathrm{Perf}^{cn}(X^\crys, \mathcal{O}_\crys)$ at $X=\mathrm{Spec}(R)$ corresponds to taking a cartesian cosimplicial perfect complex $F^\bullet = r_{\mathrm{Perf}^{cn}}(F) \in  \lim \mathrm{Perf}^{cn}(D^\bullet)$ to $F^0 \otimes_{\widetilde{R}}^L R \in \mathrm{Perf}(R)$; similarly for the second equivalence, except the derived tensor product is replaced by an ordinary tensor product. Thus, using the derived Nakayama lemma and the regularity of $\widetilde{R}$, we can identify the category $\mathcal{C}$ in $(\ast)$ as the full subcategory of $F^\bullet \in \lim_{D^\bullet} \mathcal{D}^{\leq 0}(D^\bullet)$ such that $F^0$ is a $p$-torsionfree finitely presented $\widetilde{R}$-module. But then each $F^i$ must be a $p$-torsionfree finitely presented $D^i$-module for all $i$ (as any face map $D^0 \to D^i$ is flat). Thus, we have identified $\mathcal{C} \subset  \mathrm{Perf}^{cn}(X^\crys, \mathcal{O}_\crys)$  with 
\[ \lim_{D^\bullet} \mathrm{Mod}^{fp}(D^\bullet)',\]
where $\mathrm{Mod}^{fp}(D^i)' \subset \mathrm{Mod}^{fp}(D^i)$ is the full subcategory of $p$-torsionfree objects; by the same flatness arguments again, this limit is exactly the full subcategory of  $\mathrm{Coh}(X^\crys, \mathcal{O}_\crys)$ spanned by $p$-torsionfree crystals, so we have shown $(\ast)$.
\end{proof}

\begin{remark}[Independence of the lattice]
\label{LattIndep}
Given $p$-torsionfree crystals $E,F \in \mathrm{Coh}(X^\crys, \mathcal{O}_\crys)$, if $E$ and $F$ are identified in the isogeny category $\mathrm{Coh}(X^\crys, \mathcal{O}_\crys)[1/p]$ (i.e., they give the same isocrystal), then the coherent sheaves $E(X)$ and $F(X)$ on $X$ have the same class in $K(X)$ (as explained in the next paragraph); consequently, Corollary~\ref{IsoCrysChernVanish} can be seen as a vanishing result for Chern classes of isocrystals. (The argument below also appears in \cite[Proposition 3.1]{EsnaultShihoConv}.)

To see the statement in $K$-theory, we may assume (by multiplying by a $p$-power) that $E$ is a suboject of $F$ with cokernel $Q=F/E$ killed by $p^n$. Filtering $Q$ by the $p$-adic filtration, we may assume $n=1$, i.e., $Q$ is a crystal in coherent sheaves on $X/W$ which is actually killed by $p$, i.e., is a crystal of coherent sheaves on $X/k$ or equivalently a coherent sheaf on $X$ with a flat connection with nilpotent $p$-curvature. Evaluating the exact triangle $E \to F \to Q$ in $\mathrm{Perf}(X^\crys, \mathcal{O}_\crys)$ on $X$ itself shows that $E(X) \to F(X)$ has both kernel and cokernel given by the coherent sheaf $Q(X)$, whence $[E(X)] = [F(X)]$ in $K$-theory, as wanted. 
\end{remark}

In the rest of this section, we make a few extended remarks. First, we reformulate Theorem~\ref{CrysChernVanish} and Corollary~\ref{IsoCrysChernVanish} using the stack theoretic perspective on crystalline cohomology.

\begin{remark}[A stacky reformulation of vanishing]
\label{stackycrysvan}
In \S \ref{sec:char0}, we explained the characteristic $0$ version of Theorem~\ref{CrysChernVanish} using Simpson's de Rham stack construction. The proof of Theorem~\ref{CrysChernVanish} given above avoided discussing the analog of Simpson's construction, and thus looked slightly different. Nevertheless, using the recent prismatization functors studied in \cite{DrinfeldPrismatization, BhattLurieAPC,BhattLuriePrismatization,Bhatt2022LecturesFGauges} (which give an analog of Simpson's construction for the crystalline/prismatic theory), one can give a proof of Theorem~\ref{CrysChernVanish} that looks quite similar to the one  \S \ref{sec:char0}. We briefly explain this approach as it avoids contemplating functors on the category $\mathrm{PDAlg}_{/W}$ of PD-pairs, naturally suggests an analog over other prisms (explored in the sequel), and also hints at what is special about crystalline prisms (namely, the ``crystalline'' map $\eta$ in the proof below). To avoid complications, we formulate the statements only for vector bundles, though they can be extended to all perfect complexes using $K$-theory just as before. For the rest of this remark, we assume familiarity with the formalism of prismatization.

Let $X$ be a smooth $k$-scheme equipped with a rank $n$ vector bundle $E$. Assume that $E$ admits a crystal structure. By \cite{BhattLuriePrismatization}, such a structure is equivalent to specifying a vector bundle on $X^\Prism$ lifting $E$ along the de Rham map $\rho_{dR}:X \to X^\Prism$. More symbolically, such a structure is encoded by a map $\alpha_E:X^\Prism \to \mathrm{Vect}_n(-/W)^{\wedge}$ of $p$-adic formal $W$-stacks together with an identification of the composition 
\[ X \xrightarrow{\rho_{dR}} X^\Prism \xrightarrow{\alpha_E} \mathrm{Vect}_n(-/W)^{\wedge} \]
with the composition
\[ X \xrightarrow{[E]} \mathrm{Vect}_n(-/k) \to \mathrm{Vect}_n(-/W)^{\wedge},\]
where the first map is the classifying map for the bundle $E$ and the second map is the tautological map. Since $\mathrm{Vect}_n(-/W)$ is a $W$-stack lifting $\mathrm{Vect}_n(-/k)$, we have a ``crystalline'' map $\eta:\mathrm{Vect}_n(-/W)^{\wedge} \to \mathrm{Vect}_n(-/k)^\Prism$: this is induced via transmutation from the natural map $\epsilon_R:R \to W(R)/p$ for any $p$-nilpotent ring $R$, induced by the Frobenius on $W(R)$. Now, using the functoriality of $\alpha_E$ under $\epsilon_R$ for a $p$-nilpotent test ring $R$, one verifies\footnote{This relies on the following somewhat funny observation about animated rings: for a $p$-nilpotent animated ring $R$, the two maps $W(R)/p \to W(W(R)/p)/p$ given by $\epsilon_{W(R)/p}$ and $W(\epsilon_R)/p$ are canonically homotopic, where, as before, $\epsilon_S:S \to W(S)/p$ is the natural map induced by $F:W(S) \to W(S)$ for any animated ring $S$.} the following: the composition 
\[ X^\Prism \xrightarrow{\alpha_E} \mathrm{Vect}_n(-/W)^{\wedge} \xrightarrow{\eta} \mathrm{Vect}_n(-/k)^\Prism\]
identifies with 
\[ [E]^\Prism:X^\Prism \to \mathrm{Vect}_n(-/k)^\Prism.\]
In particular, the pullback on $\mathcal{O}$-cohomology for $[E]^\Prism$ factors as
\[ \left([E]^\Prism\right)^*: H^*(\mathrm{Vect}_n(-/k)^\Prism, \mathcal{O}) \to H^*(\mathrm{Vect}_n(-/W)^{\wedge},\mathcal{O}) \to H^*(X^\Prism,\mathcal{O}),\]
and is thus $0$ in positive degrees as the term in the middle is $0$ in positive degrees by Theorem~\ref{DeligneCalc}. On the other hand, by functoriality, the Chern classes $c_i(E)$ are pulled back from the universal Chern classes $c_i(E) \in H^{2i}(\mathrm{Vect}_n(-/k)^\Prism, \mathcal{O})$ along $[E]^\Prism$; it follows that $c_i(E) = 0$ for $i > 0$.
\end{remark}

\begin{remark}[The stacky perspective on the passage from perfect complexes to coherent sheaves]
\label{StackyPerftoCoh}
Corollary~\ref{IsoCrysChernVanish} is also quite clear from the stacky perspective, so let us sketch the argument. As $X$ is smooth, its prismatization $X^\Prism$ is a smooth $p$-adic formal $W$-stack such that $\mathrm{Perf}(X^\Prism) \simeq \mathrm{Perf}(X^\crys, \mathcal{O}_\crys)$ and $\mathrm{Coh}(X^\Prism) \simeq \mathrm{Coh}(X^\crys, \mathcal{O}_\crys)$ by \cite{BhattLuriePrismatization}; the regularity of $X^\Prism$ gives an inclusion $\mathrm{Coh}(X^\Prism) \subset \mathrm{Perf}(X^\Prism)$, letting us view a crystal of coherent sheaves as a crystal of perfect complexes. Moreover, we have a natural map $\eta:X \to X^\Prism$ such that pullback along $\eta$ corresponds to evaluating a crystal on $X$ (with derived pullback used for perfect complexes, and underived pullback used for coherent sheaves). Critically, the map $\eta$ factors as $X \to (X^\Prism)_{p=0} \to X^\Prism$, where the first map is faithfully flat and the second map is the tautological one. It is then clear if $E \in \mathrm{Coh}(X^\Prism)$ is $p$-torsionfree, then $\eta^* E = L\eta^* E$, so we can view such an $E$ as a crystal of perfect complexes without changing its value on $X$. Corollary~\ref{IsoCrysChernVanish} then follows from Theorem~\ref{CrysChernVanish}.
\end{remark}

We end this section by discussing a question on vanishing of syntomic Chern classes.

\begin{remark}[A question on vanishing of syntomic Chern classes]
\label{SynVanCharpRemark}
Fix a smooth proper $k$-scheme $X$. One then has the attached syntomic cohomology\footnote{Syntomic cohomology is a form of \'etale motivic cohomology, defined recently in full generality. Its relationship to crystalline cohomology is analogous to the relationship of Deligne cohomology with singular cohomology over the complex numbers. In the characteristic $p$ case considered here, it also coincides with the cohomology of logarithmic de Rham--Witt sheaves. We refer to \cite[Remark 2.14]{BhattICM22} for further references.} complex $R\Gamma_\Syn(X,\mathbf{Z}_p(i))$ for each $i \geq 0$, which sits in a fibre sequence
\[ R\Gamma_\Syn(X,\mathbf{Z}_p(i)) \to \mathrm{Fil}^i_{\mathcal{N}} R\Gamma_\crys(X/W) \xrightarrow{\varphi_i - 1} R\Gamma_\crys(X/W),\]
where $\mathrm{Fil}^*_{\mathcal{N}}$ is the Nygaard filtration on crystalline cohomology, and $\varphi_i$ is a certain divided Frobenius map heuristically given by $\frac{\varphi}{p^i}$. As explained in Remark~\ref{rmk:synChern}, given a perfect complex $E$ on $X$, one has syntomic Chern classes $c_i^\Syn(E) \in H^{2i}_\Syn(X,\mathbf{Z}_p(i))$, refining those in crystalline cohomology. Motivated by Theorem~\ref{CrysChernVanish}, one can ask:

\begin{itemize}
\item[$(\ast)$] If $E \in \mathrm{Perf}(X)$ lifts to a crystal in perfect complexes in $(X/W)_\crys$, then do the syntomic Chern classes $c_i^\Syn(E) \in H^{2i}_\Syn(X, \mathbf{Z}_p(i))$ also vanish for $i > 0$? 
\end{itemize}

Let us note right away that the question has a negative answer for $i=1$ if $k$ is not algebraically closed (see item (4) below); it is probably most reasonable to ask $(\ast)$ over an algebraically closed field $k$.  We next make a sequence of remarks on this question (which is already interesting when $E$ is a vector bundle); we thank Esnault for a number of discussions surrounding $(\ast)$ as well as the remarks below.

\begin{enumerate}
\item {\em The motivating complex analogue:}  Recall that Bloch conjectured \cite{Blochdilog} that the Chow-theoretic $i$-th Chern class $c_i$ of a vector bundle with flat connection on a smooth projective variety $M/\mathbf{C}$ lies in the deepest layer $F^i \mathrm{CH}^i(M)_{\mathbf{Q}}$ of the conjectural Bloch--Beilinson filtration on the rational Chow group $\mathrm{CH}^i(M)_{\mathbf{Q}}$. Under the interpretation of the Chow group as an appropriate $\mathrm{Hom}$-group in a category of motives, this filtration is induced by the conjectural ``standard'' $t$-structure on the category of motives over $\mathbf{C}$. In particular, under any realization functor towards a $\mathbf{Q}$-linear category $\mathcal{D}$ that is already equipped with a ``standard'' $t$-structure, this conjectural filtration on $\mathrm{CH}^i(M)_{\mathbf{Q}}$ ought to map to the filtration on $\mathrm{Hom}_{\mathcal{D}}(\mathbf{Q}(0), \mathcal{H}(M)(i)[2i])$ induced by the Postnikov filtration on the motive $\mathcal{H}(M)$ of $M$. Thus, Bloch's conjecture implies that such a $c_i$ class must map to $0$ if $i$ exceeds the cohomological dimension $c$ of $\mathcal{D}$. We have $c=1$ for the realization coming from Hodge theory, so Bloch's conjecture predicts the following:

\begin{theorem}[Reznikov \cite{ReznikovBloch}]
\label{Reznikov}
Fix a smooth projective variety $M/\mathbf{C}$ and a flat vector bundle $V \in \mathrm{Vect}^\nabla(M)$.  For $i \geq 2$, the Deligne cohomology Chern classes $c_i^D(V) \in H^{2i}_D(M, \mathbf{Q}(i))$ are $0$. 
\end{theorem}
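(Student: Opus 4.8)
The plan is to separate $c_i^D(V)$ into a ``primary'' de Rham piece, which the stack-theoretic argument of \S\ref{sec:char0} already kills, and a ``secondary'' transcendental piece living in the intermediate Jacobian, which is the real content. Concretely: for a flat $(V,\nabla)$, Theorem~\ref{dRChernVanish} together with the Betti--de Rham comparison gives $c_i(V) = 0$ in $H^{2i}(M,\mathbf{Q})$, so the image of $c_i^D(V)$ under $H^{2i}_D(M,\mathbf{Q}(i)) \to H^{2i}(M,\mathbf{Q}(i))$ vanishes and $c_i^D(V)$ lies in the rationalized intermediate Jacobian $J^i(M)_{\mathbf{Q}} = H^{2i-1}(M,\mathbf{C})/(F^i + H^{2i-1}(M,\mathbf{Q}(i)))$. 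Equivalently, the Cheeger--Chern--Simons secondary class $\widehat c_i(V) \in H^{2i-1}(M,\mathbf{C}/\mathbf{Z}(i))$ is defined and refines $c_i^D(V)$, so it suffices to prove that $\widehat c_i(V)$ is \emph{torsion}: since $J^i(M)_{\mathbf{Q}}$ is uniquely divisible, this forces $c_i^D(V) = 0$ in $H^{2i}_D(M,\mathbf{Q}(i))$. The class $\widehat c_i(V)$ is pulled back along the monodromy $\rho \colon \pi_1(M) \to \mathrm{GL}_n(\mathbf{C})$ from a universal class in $H^{2i-1}(B\mathrm{GL}_n(\mathbf{C})^{\delta}, \mathbf{C}/\mathbf{Z}(i))$, and proving it is torsion I would split into an analytic vanishing (of the real regulator) and an arithmetic rationality statement.

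For the analytic heart, I would first reduce to the case of a semisimple $\rho$ (a Jordan--H\"older and deformation argument handles the unipotent part without changing $\widehat c_i$), and then apply Corlette's existence theorem to produce a $\rho$-equivariant harmonic map $u \colon \widetilde M \to \mathrm{GL}_n(\mathbf{C})/\mathrm{U}(n)$. Computing the Cheeger--Chern--Simons form with respect to this harmonic metric exhibits it as the pullback $u^*\omega_{2i-1}$ of the bi-invariant form on the symmetric space that transgresses the $i$-th Chern form. Next I would invoke Sampson's theorem that a harmonic map from a compact K\"ahler manifold to a nonpositively curved symmetric space is pluriharmonic; combining pluriharmonicity of $u$ with the $(p,q)$-decomposition of the invariant differential forms on $\mathrm{GL}_n(\mathbf{C})/\mathrm{U}(n)$ and the K\"ahler identities on $M$, one should be able to show that each bidegree component of $u^*\omega_{2i-1}$ is $\partial\bar\partial$-exact, hence that $u^*\omega_{2i-1}$ is exact. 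This yields the vanishing of the real (equivalently de Rham) part of $\widehat c_i(V)$; running the same computation in families over the Betti moduli $\mathcal{M}_B(M) = \mathrm{Hom}(\pi_1 M, \mathrm{GL}_n(\mathbf{C}))/\!\!/\mathrm{GL}_n(\mathbf{C})$ would further show that $\widehat c_i$ is locally constant on $\mathcal{M}_B(M)$. It is exactly here that the projectivity of $M$ and the restriction $i \geq 2$ enter: for $i = 1$ the transgression form has degree one and the exactness argument fails, consistent with the fact that a flat line bundle of infinite-order monodromy has non-torsion $c_1^D$.

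With the real part killed and local constancy in hand, the arithmetic finish would go through non-abelian Hodge theory. Under the Corlette--Simpson homeomorphism $\mathcal{M}_B(M) \simeq \mathcal{M}_{\mathrm{Dol}}(M)$ with the moduli of semistable Higgs bundles, the closure of every orbit of the $\mathbf{C}^{\times}$-action scaling the Higgs field meets the fixed locus, whose points are precisely the complex variations of Hodge structure. Hence every connected component of $\mathcal{M}_B(M)$ contains a point $\rho_0$ underlying a complex variation of Hodge structure, and by local constancy $\widehat c_i(V) = \widehat c_i(\rho_0)$. For such a $\rho_0$ one finishes via the rigidity of variations of Hodge structure together with the torsionness of the Beilinson/Borel regulators of $K$-theory classes over number rings (Soul\'e, Borel) — ultimately reducing to representations defined over $\overline{\mathbf{Q}}$. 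This would show $\widehat c_i(V)$ is torsion and complete the argument.

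The main obstacle is the computation in the second paragraph: proving that the pullback of the invariant transgression form under the (pluri)harmonic map is $\partial\bar\partial$-exact. This is the step where compact K\"ahler geometry is genuinely used, where the degree constraint $i \geq 2$ originates, and where essentially all the work lies; by contrast, the reduction to the secondary class is the soft part already accounted for by the de Rham stack of \S\ref{sec:char0}, and the arithmetic finish is a packaging of classical input. (For $i=1$ the statement is simply false, so $i \geq 2$ cannot be improved.)
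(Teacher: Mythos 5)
First, a point of calibration: the paper does not prove this statement at all --- Theorem~\ref{Reznikov} is quoted from \cite{ReznikovBloch} inside Remark~\ref{SynVanCharpRemark} purely as motivation for the syntomic question $(\ast)$, so there is no internal proof to compare against. Your first paragraph's reduction is sound: vanishing of the topological/de Rham classes (Theorem~\ref{dRChernVanish}) puts $c_i^D(V)$ in the intermediate-Jacobian part of rational Deligne cohomology, and torsionness of the Cheeger--Chern--Simons class $\widehat c_i(V) \in H^{2i-1}(M,\mathbf{C}/\mathbf{Z}(i))$ does imply $c_i^D(V)=0$ rationally. The remaining outline also has the right overall shape of Reznikov's actual argument (Corlette's harmonic metric, Simpson's nonabelian Hodge theory and the density of variations of Hodge structure, an arithmetic input at $\overline{\mathbf{Q}}$-points via Borel/Soul\'e).

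The genuine gap is exactly the step you flag as ``the main obstacle,'' and as stated it is both unproven and stronger than what is true in Reznikov's proof. You assert that pluriharmonicity of $u$ plus the K\"ahler identities ``should'' make $u^*\omega_{2i-1}$ $\partial\bar\partial$-exact, hence exact, killing the real part of $\widehat c_i(V)$ pointwise. Reznikov does not prove (and one should not expect) such a pointwise exactness statement for an arbitrary flat bundle; what the harmonic/pluriharmonic theory actually yields is a \emph{variational} statement --- the derivative of the secondary classes along deformations of the representation vanishes for $i \geq 2$, i.e.\ rigidity (local constancy) on the representation variety --- and the vanishing itself is only obtained after deforming into the locus of complex variations of Hodge structure and using their special structure, together with the arithmetic input. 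Your own outline betrays this tension: if the pointwise exactness argument worked, the subsequent deformation-to-VHS and the Borel--Soul\'e finish would be largely superfluous for the real regulator, yet you still invoke them. So the proposal is an outline whose analytic core is a conjecture (``one should be able to show''), not a proof; to repair it you would need to replace the claimed $\partial\bar\partial$-exactness by the actual rigidity lemma (vanishing of the first variation of $\widehat c_i$, $i\geq 2$, over the representation variety of a compact K\"ahler manifold) and then carry out the VHS and arithmetic steps in detail, which is precisely the content of \cite{ReznikovBloch}.
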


Regarding syntomic cohomology as an internal Hom in the realization of $p$-adic motives provided by $F$-gauges then suggests viewing $(\ast)$ as a syntomic counterpart of Theorem~\ref{Reznikov}. Of course, Question $(\ast)$ is formulated at the $p$-integral level, while Reznikov's theorem is only a rational statement; we think this is reasonable since Theorem~\ref{CrysChernVanish} is also an integral statement, unlike its characteristic $0$ counterpart which is only a rational statement. Note that Reznikov's result necessarily excludes the case $i=1$: the weight $1$ Deligne cohomology group $H^2_D(M,\mathbf{Z}(1))$ is the Picard group itself. On the other hand, due to the $p$-completions implicit in syntomic cohomology, question $(\ast)$ actually has a positive answer for $i=1$ over algebraically closed fields, as discussed in (4) below.

\item {\em The case of Frobenius structures: } The question $(\ast)$  has a positive answer in the following (overlapping) cases:
\begin{itemize}
\item If $E$ admits the structure of a mod $p$ $F$-gauge on $X$ (i.e., comes from a perfect complex on $(X^\Syn)_{p=0}$, following the notation of \cite{Bhatt2022LecturesFGauges}): the $F$-gauge structure can be shown to endow $E$ with two filtrations whose associated gradeds differ by Frobenius pullback, giving a canonical identification of points $[\varphi_X^* E] = [E]$ in the $K$-theory space $K(X)$, which immediately implies the vanishing of $c_i^\Syn(E) \in H^{2i}_\Syn(X,\mathbf{Z}_p(i))$ for $i > 0$ as $\varphi_X^*$ acts by $p^i$ on $H^{2i}_\Syn(X,\mathbf{Z}_p(i))$.

\item If $E \in \mathrm{Coh}(X)$ lifts to a crystal $\widetilde{E}$ in $p$-torsionfree coherent sheaves on $X$ endowed with an $F$-isocrystal structure $\varphi_{\widetilde{E}}:\varphi^* \widetilde{E}[1/p] \simeq \widetilde{E}[1/p]$: one again has $[\varphi_X^* E] = [E]$ in $K_0(X)$ as above. This can be seen by specializing to the previous item (extend $\widetilde{E}$ to an $F$-gauge via reflexive extension and reduce $p$); alternately, it can also be seen directly by noticing that, for $N \gg 0$, the cokernel of the Frobenius $\varphi_{\widetilde{E}}:\varphi^* \widetilde{E} \simeq p^N \varphi^* \widetilde{E} \to \widetilde{E}$ is killed by a power of $p$, and hence its (derived) mod $p$ reduction has trivial class in $K$-theory as in Remark~\ref{LattIndep}. 

\item If $E \in \mathrm{Coh}(X)$ lifts to a crystal $\widetilde{E}$ in $p$-torsionfree coherent sheaves on $X$ which is convergent, i.e., infinitely $F$-divisible as an isocrystal (see \cite[Remark 2.4]{EsnaultShihoConv}). (This generalizes the previous entry.) Indeed, in this case, we would learn that $[E] \in K_0(X)$ is infinitely divisible by $\varphi_X^*$, i.e., there exists $E_n \in \mathrm{Coh}(X)$ with $E_0=E$ and $[\varphi_X^* E_{n+1}] = [E_n]$ in $K_0(X)$; as $\varphi_X^*$ acts by $p^i$ on $H^{2i}_\Syn(X, \mathbf{Z}_p(i))$, one concludes by that $c_i^\Syn(E)$ admits a compatible system of $p$-divisions, and must thus be $0$ since $H^{2i}_\Syn(X, \mathbf{Z}_p(i))$ is derived $p$-complete. 
\end{itemize}

All these cases involve additional structure related to the Frobenius. Thus, they are roughly analogous to the restriction of Reznikov's theorem to flat connections underlying variations of Hodge structures; in this case, the properness assumption can be dropped from Reznikov's theorem by \cite{CorletteEsnaultClass}, which is consistent with the fact that the proofs sketched above do not use properness.

\item {\em The rational version:} When $k=\overline{k}$, question $(\ast)$ has a positive answer modulo torsion. To see this, it is enough to show that the map $H^{2i}_\Syn(X,\mathbf{Z}_p(i)) \to H^{2i}_\crys(X)$ is injective after inverting $p$. After inverting $p$, we have (by definition) a fibre sequence
\[ R\Gamma_\Syn(X, \mathbf{Z}_p(i))[1/p] \to R\Gamma_\crys(X)[1/p] \xrightarrow{\varphi_X^*-p^i} R\Gamma_\crys(X)[1/p],\]
so it is enough to show that the second map appearing above is surjective on each cohomology group, i.e., that $\varphi_X^*-p^i$ is surjective on the Dieudonne module $(V := H^j_\crys(X/W)[1/p], \varphi_V := \varphi_X^*)$. Write $\mathrm{DM}_k$ for the isogeny category of Dieudonne modules over $k$, i.e., finite dimensional vector spaces $U$ over $K=W(k)[1/p]$ equipped with an isomorphism $\varphi_K^* U \simeq U$.  By definition of $\mathrm{Hom}$'s in $\mathrm{DM}_k$, we can identify $\mathrm{coker}(\varphi_V-p^i)$ with $\mathrm{Ext}^1_{\mathrm{DM}_k}( K(-i), V)$; the latter vanishes as $\mathrm{DM}_k$ is semisimple by the algebraic closedness of $k$ and the Dieudonne--Manin classification

\item {\em The weight $1$ case (by Petrov and Vologodsky\footnote{After seeing a preliminary version of this preprint raising Question $(\ast)$ in general, Petrov and Vologodsky kindly explained to the author how to solve the weight $1$ case. All (correct) arguments recorded in this item are due to them, and we are most grateful for their help.}):} The weight $1$ case of $(\ast)$ over an algebraically closed field follows from Theorem~\ref{CrysChernVanish} and the next result:

\begin{proposition}[Illusie]
\label{IllInj}
Assume $k=\overline{k}$. Then the natural map
\[ H^2_\Syn(X,\mathbf{Z}_p(1)) \to H^2_\crys(X) \]
is injective.
\end{proposition}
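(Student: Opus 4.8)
The plan is to analyze the syntomic fiber sequence
\[
R\Gamma_\Syn(X,\mathbf{Z}_p(1)) \to \mathrm{Fil}^1_{\mathcal{N}} R\Gamma_\crys(X/W) \xrightarrow{\varphi_1 - 1} R\Gamma_\crys(X/W)
\]
in low degrees and extract the kernel of $H^2_\Syn(X,\mathbf{Z}_p(1)) \to H^2_\crys(X)$ from the boundary map. The long exact sequence shows that this kernel is a quotient of $H^1_\crys(X/W)$ by the image of $(\varphi_1 - 1)$ acting on $\mathrm{Fil}^1_{\mathcal{N}} H^1_\crys(X/W)$ (using that $\mathrm{gr}^1_{\mathcal{N}}$ contributes via Hodge cohomology, and the Nygaard filtration on $H^1$ is understood). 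So the whole problem reduces to showing that $\varphi_1 - 1 = \frac{\varphi}{p} - 1$ is surjective on $H^1_\crys(X/W)$ — more precisely on the relevant subobject — equivalently that the cokernel vanishes.

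First I would identify $H^1_\crys(X/W)$ with $\mathbf{D}(G)$, the (contravariant) Dieudonné module of $G = \mathrm{Pic}^0_{X/k}^{\vee}$ or rather the $p$-divisible group attached to the Picard/Albanese variety, so that $(\varphi, \mathrm{Fil}^1)$ is the Dieudonné module of a $p$-divisible group $H$ over $k$; this is Illusie's classical identification of $H^1$ of the crystalline/de Rham–Witt complex with the Dieudonné module of the Picard scheme. Under this dictionary, the weight-$1$ syntomic cohomology $H^1_\Syn(X,\mathbf{Z}_p(1))$ and the cokernel in question become computable via the Dieudonné module of $H$: the operator $\varphi_1 - 1$ on the "Nygaard-filtered" piece is exactly the operator whose kernel computes $T_pH$ (giving $H^1_\Syn$) and whose cokernel one wants to vanish. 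Concretely, since $k = \overline{k}$, one can reduce to the case of a simple isocline $p$-divisible group; the key point is that $\frac{\varphi}{p} - 1$ (suitably interpreted with the Nygaard filtration keeping things integral) is surjective precisely because $k$ is algebraically closed — this is the Dieudonné-module incarnation of Lang's theorem / the vanishing of $H^1_{\et}(k,\mathbf{G})$ for connected groups, or equivalently the statement that $1 - F$ is surjective on $W(\overline{k})$-lattices with the right slope. This is essentially the same mechanism as in item (3) of Remark~\ref{SynVanCharpRemark}, but now executed integrally rather than rationally, which is possible in weight $1$ because the Nygaard filtration on $H^1$ is a single honest step and the resulting map is surjective on the nose over $\overline{k}$.

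The main obstacle I expect is making the integral surjectivity precise: rationally the argument is the $\mathrm{Ext}^1$-vanishing in the semisimple isogeny category of Dieudonné modules (exactly item (3)), but integrally one must track the Nygaard filtration and show the divided Frobenius $\varphi_1$ lands where it should and that $\varphi_1 - 1$ hits everything in $H^2$-degree, i.e.\ that there is no integral obstruction beyond the rational one. Concretely this amounts to: (a) checking $H^2_\crys(X/W)$ modulo torsion has no unit-root obstruction that $\varphi_1 - 1$ fails to surject onto — handled by $\overline{k}$ and Dieudonné–Manin — and (b) checking the torsion/integral refinement, which is where one genuinely uses that we are in weight $1$ and invokes Illusie's structure theory (or equivalently the flat cohomology interpretation $H^2_\Syn(X,\mathbf{Z}_p(1)) \simeq H^2_{\mathrm{fl}}(X,\mathbf{Z}_p(1))$ together with the fact that over $\overline{k}$ the relevant flat cohomology injects into crystalline cohomology by comparing $\mathbf{G}_m$ with its de Rham–Witt incarnation). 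I would present it in the Illusie/flat-cohomology language, since that is the form in which the injectivity is cleanest and is presumably the content of the attribution to Illusie in the statement.
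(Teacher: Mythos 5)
Your plan takes a genuinely different route from the paper, and as written it has a real gap at the very first reduction. From the fiber sequence $R\Gamma_\Syn(X,\mathbf{Z}_p(1)) \to \mathrm{Fil}^1_{\mathcal{N}} R\Gamma_\crys(X/W) \xrightarrow{\varphi_1-1} R\Gamma_\crys(X/W)$, the long exact sequence identifies only the kernel of $H^2_\Syn(X,\mathbf{Z}_p(1)) \to H^2(\mathrm{Fil}^1_{\mathcal{N}} R\Gamma_\crys)$ with the cokernel of $\varphi_1-1$ in degree $1$ (and note this involves $H^1$ of the filtered complex, not the subgroup $\mathrm{Fil}^1_{\mathcal{N}} H^1_\crys$). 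The map in the proposition is the further composite with $H^2(\mathrm{Fil}^1_{\mathcal{N}} R\Gamma_\crys) \to H^2_\crys(X)$, and this map is \emph{not} injective in general: its kernel is a quotient of $H^1(\mathrm{gr}^0_{\mathcal{N}})$ (roughly, of $H^1(X,\mathcal{O}_X)$), which is nonzero for many $X$. So even granting the surjectivity of $\varphi_1-1$ on the $H^1$-level --- which is indeed where $k=\overline{k}$ enters, via the Lang/Artin--Schreier--Witt mechanism you describe --- you still have to show that the image of $H^2_\Syn$ inside $H^2(\mathrm{Fil}^1_{\mathcal{N}})$ meets $\ker\bigl(H^2(\mathrm{Fil}^1_{\mathcal{N}}) \to H^2_\crys\bigr)$ trivially, equivalently that $\ker(\varphi_1)\cap\ker(\mathrm{can})=0$ there. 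That is the genuine content of the injectivity, and it is not supplied by Dieudonn\'e--Manin or semisimplicity (those are isogeny-level statements, while the issue here is integral/torsion); your parenthetical about $\mathrm{gr}^1_{\mathcal{N}}$ does not touch it, since the relevant graded piece is $\mathrm{gr}^0_{\mathcal{N}}$.

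For contrast, the paper avoids the Nygaard filtration entirely: it factors the map as $H^2_\Syn(X,\mathbf{Z}_p(1)) \xrightarrow{a} H^2(X,W\Omega^{\geq 1}_X) \xrightarrow{b} H^2(X,W\Omega_X)=H^2_\crys(X)$, quotes Illusie for the injectivity of the $d\log$-type map $a$, and deduces injectivity of $b$ from the triangle $W\Omega^{\geq 1}_X \to W\Omega_X \to W(\mathcal{O}_X)$ together with Illusie's surjectivity of $H^1(X,W\Omega_X) \to H^1(X,W(\mathcal{O}_X))$. Note that this second step is exactly the analogue of the step your plan omits: for the filtration $W\Omega^{\geq 1}\subset W\Omega$ the problematic kernel vanishes outright thanks to that surjectivity in degree $1$, whereas for the Nygaard filtration the analogous surjectivity onto $H^1(\mathrm{gr}^0_{\mathcal{N}})$ is not automatic and you do not address it. Your closing fallback (``present it in the Illusie/flat-cohomology language'') is in effect the paper's proof, but as stated it is a pointer rather than an argument; to complete your route you would need either to prove the missing intersection statement for the Nygaard filtration or to switch to the $W\Omega^{\geq 1}$ filtration as the paper does.
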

\begin{proof}
The map in question factors as 
\[ H^2_\Syn(X,\mathbf{Z}_p(1)) \xrightarrow{a} H^2(X, W\Omega^{\geq 1}_X) \xrightarrow{b} H^2(X,W\Omega_X) = H^2_\crys(X), \]
where $a$ is the de Rham-Witt version of the $d\log$ map (see \cite[page 582, Corollary 3.27]{IllusiedRW} for instance), while $b$ is the natural map. By \cite[page 630, Remark 5.10]{IllusiedRW}, the  map $a$ is injective, so it is enough to show that the second map is injective. This follows from the long exact sequence associated to the triangle
\[ W\Omega^{\geq 1}_X \to W\Omega_X \to W(\mathcal{O}_X) \]
and the fact that $H^1(X,W\Omega_X) \to H^1(X, W(\mathcal{O}_X))$ is surjective \cite[page 618, Lemma 3.11]{IllusiedRW}.
\end{proof}

Now recall that syntomic cohomology in weight $1$ has a concrete description \cite[Theorem 7.5.6]{BhattLurieAPC}: indeed, we have
\[ R\Gamma_\Syn(X,\mathbf{Z}_p(1)) = R\Gamma(X_{et}, \mathbf{G}_m)^{\wedge}[-1].\]
As $X$ is smooth proper over the perfect field $k$ , the group $H^0(X,\mathbf{G}_m)$ is uniquely $p$-divisible, while $H^1(X,\mathbf{G}_m) = \mathrm{Pic}(X)$ is the extension of a finitely generated abelian group (Neron-Severi) by a divisible abelian group (an abelian variety). Consequently, the system $\{H^1(X,\mu_{p^n})\}_{n \geq 1}$ is Mittag-Leffler. This implies that the derived $p$-completion of $\mathrm{Pic}(X)$ as an abelian group coincides with the naive $p$-adic completion $\mathrm{Pic}(X)^{\wedge} = \lim_n \mathrm{Pic}(X)/p^n$,  and also that 
\[ H^2_\Syn(X,\mathbf{Z}_p(1)) = \lim_n H^2(X,\mu_{p^n}).\]
By the Kummer sequence, this yields an injective map 
\begin{equation}
\label{PictoSyn}
 \mathrm{Pic}(X)^{\wedge}  \to H^2_\Syn(X,\mathbf{Z}_p(1))
 \end{equation}
which can be (by unwinding definitions) identified with the first Chern class map. Thus, question $(\ast)$ in rank $1$ can be formulated as the following concrete assertion:

\begin{corollary}
\label{LineBunCrys}
Assume $k=\overline{k}$. If $L \in \mathrm{Pic}(X)$ lifts to a crystal on $(X/W)_\crys$, then $L$ is trivial in $\mathrm{Pic}(X)^{\wedge}$. In fact, $L$ admits a compatible system of $p$-power roots in $\mathrm{Pic}(X)$. 
\end{corollary}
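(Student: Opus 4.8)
The plan is to bootstrap from the already-established statements, using the chain of maps assembled in Remark~\ref{SynVanCharpRemark} and Proposition~\ref{IllInj}. First, suppose $L \in \mathrm{Pic}(X)$ lifts to a crystal on $(X/W)_\crys$. By the weight $1$ case $(\ast)$ established just above (via Theorem~\ref{CrysChernVanish} together with Proposition~\ref{IllInj}), the syntomic Chern class $c_1^\Syn(L) \in H^2_\Syn(X,\mathbf{Z}_p(1))$ vanishes. Now invoke the identification \eqref{PictoSyn}: the first Chern class map $\mathrm{Pic}(X)^\wedge \to H^2_\Syn(X,\mathbf{Z}_p(1))$ is injective, and it carries the class of $L$ to $c_1^\Syn(L) = 0$. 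Hence $L$ is trivial in $\mathrm{Pic}(X)^\wedge = \lim_n \mathrm{Pic}(X)/p^n$; equivalently, $L$ is infinitely $p$-divisible in the abelian group $\mathrm{Pic}(X)/(\text{divisible part})$, i.e.\ its image in the finitely generated quotient $\mathrm{NS}(X) = \mathrm{Pic}(X)/\mathrm{Pic}^0(X)$ is $p$-divisible and therefore (being in a finitely generated group) $0$ after tensoring with $\mathbf{Z}_p$; more precisely $L \in \mathrm{Pic}(X)^\wedge$ being $0$ means exactly that $L \in p^n \mathrm{Pic}(X) + (\text{no correction needed})$ is compatibly $p^n$-divisible modulo nothing, since $\mathrm{Pic}(X)^\wedge$ already records this.

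For the second, sharper assertion — that $L$ admits an actual compatible system of $p$-power roots in $\mathrm{Pic}(X)$ (not merely in the completion) — one uses the structure of $\mathrm{Pic}(X)$ recalled in the proof of Proposition~\ref{IllInj}: it sits in an exact sequence $0 \to A(k) \to \mathrm{Pic}(X) \to \mathrm{NS}(X) \to 0$ with $A = \mathrm{Pic}^0_{X/k,\mathrm{red}}$ an abelian variety over the algebraically closed field $k$ and $\mathrm{NS}(X)$ finitely generated. Triviality of $L$ in $\mathrm{Pic}(X)^\wedge$ forces the image of $L$ in $\mathrm{NS}(X) \otimes \mathbf{Z}_p$ to be $0$, hence (as $\mathrm{NS}(X)$ is finitely generated) the image of $L$ in $\mathrm{NS}(X)$ is torsion of order prime to $p$; after replacing $L$ by a suitable prime-to-$p$ multiple — which does not affect the existence of $p$-power roots up to the same prime-to-$p$ adjustment, and one checks this is harmless — we may assume $L \in A(k)$. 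Now $A(k)$ is a $p$-divisible abelian group (multiplication by $p$ on an abelian variety over an algebraically closed field is surjective on points), and in fact the $p$-divisible group $A[p^\infty]$ has a well-understood Tate module; the vanishing of $c_1^\Syn(L)$, unwound through \eqref{PictoSyn} and the Kummer sequence $0 \to \mathrm{Pic}(X)/p^n \to H^2(X,\mu_{p^n}) \to H^2(X,\mathbf{G}_m)[p^n] \to 0$, says precisely that the class of $L$ in $\lim_n \mathrm{Pic}(X)/p^n$ is zero, which together with the Mittag--Leffler property of $\{\mathrm{Pic}(X)/p^n\}$ produces a compatible system $(L_n)$ with $L_n^{p} \cong L_{n-1}$ in $\mathrm{Pic}(X)$ and $L_0 = L$. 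Assembling these gives the desired compatible system of $p$-power roots.

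The main obstacle I anticipate is the passage from ``$L = 0$ in the completion $\mathrm{Pic}(X)^\wedge$'' to ``$L$ has a genuine compatible system of $p$-power roots in $\mathrm{Pic}(X)$ itself'': a priori, killing $L$ in $\lim_n \mathrm{Pic}(X)/p^n$ only says $L \in \bigcap_n p^n \mathrm{Pic}(X)$, which for a general abelian group need not yield a \emph{coherent} tower of roots. The fix is exactly the Mittag--Leffler property of the inverse system $\{\mathrm{Pic}(X)/p^n\}$ (equivalently, of $\{H^1(X,\mu_{p^n})\}$) established in the proof of Proposition~\ref{IllInj}: for a Mittag--Leffler tower of surjections, $\lim^1$ vanishes and the relevant torsor of lifts is nonempty, so a compatible system of $p$-power roots exists. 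One must also be slightly careful about the finite prime-to-$p$ Néron--Severi torsion, but since we are only asserting the existence of $p$-power roots, this torsion is orthogonal to the issue and can be absorbed. Modulo these bookkeeping points, the corollary is a direct consequence of the already-proven weight $1$ vanishing.
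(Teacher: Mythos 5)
Your first assertion is proved exactly as in the paper: Theorem~\ref{CrysChernVanish} kills $c_1(L)$ in $H^2_\crys(X)$, Proposition~\ref{IllInj} upgrades this to $c_1^\Syn(L)=0$, and the injectivity of \eqref{PictoSyn} then gives $[L]=0$ in $\mathrm{Pic}(X)^{\wedge}$. For the second assertion your route differs from the paper's: the paper observes that (thanks to Mittag--Leffler) $\mathrm{Pic}(X)^{\wedge}$ is the \emph{derived} $p$-completion and that for any abelian group $A$ the kernel of $A \to A^{\wedge}$ is a quotient of the $\mathbf{Q}_p$-vector space $\mathrm{Hom}(\mathbf{Q}_p,A)$, hence $p$-divisible, so one can choose successive $p$-th roots inside that kernel; you instead argue that the tower of (nonempty) torsors of successive $p$-th roots of $L$ has nonempty inverse limit because the relevant $\lim^1$ vanishes by Mittag--Leffler. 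That torsor argument is perfectly viable and in spirit uses the same ML input, but as written it misidentifies the controlling tower: the obstruction to a \emph{coherent} system of roots lives in $\lim^1$ of $\{\mathrm{Pic}(X)[p^n]\}\simeq\{H^1(X,\mu_{p^n})\}$ with multiplication-by-$p$ transition maps, \emph{not} of $\{\mathrm{Pic}(X)/p^n\}$; the two are not ``equivalent''. The quotient tower has surjective transitions, so its $\lim^1$ vanishes for free and tells you nothing beyond $L\in\bigcap_n p^n\mathrm{Pic}(X)$, whereas the torsion tower is not a tower of surjections in general (e.g.\ on the finite N\'eron--Severi $p$-torsion) but is ML precisely by the structure argument in the paper's proof of Proposition~\ref{IllInj} (extension of a finite group by the $p$-divisible points of $\mathrm{Pic}^0$). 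With that correction your final step is sound.

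Two further bookkeeping points. The detour through $\mathrm{NS}(X)$ and the replacement of $L$ by a prime-to-$p$ multiple is both unnecessary (the $\lim^1$ argument never uses it) and unjustified as stated: ``$mL$ has compatible $p$-power roots with $(m,p)=1$'' does not formally transfer back to $L$ without an argument. If you do want that reduction, the clean statement is that the preimage in $\mathrm{Pic}(X)$ of the prime-to-$p$ torsion of $\mathrm{NS}(X)$ is itself $p$-divisible (its quotient is finite of order prime to $p$ and $\mathrm{Pic}^0(X)(k)$ is divisible), so one can choose the roots iteratively inside that subgroup --- which is in effect a hands-on version of the paper's ``divisible kernel'' argument. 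Either repair the reduction this way or simply delete it and run the corrected $\lim^1$ argument directly.
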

\begin{proof}
The vanishing in $\mathrm{Pic}(X)^{\wedge}$ is clear from the previous discussion, Proposition~\ref{IllInj} and Theorem~\ref{CrysChernVanish}; this implies that $L$ admits $p^n$-th roots in $\mathrm{Pic}(X)$ for all $n$. To obtain a compatible system of such roots, one can then use a completely general statement: for any abelian group $A$, if $A^{\wedge}$ denotes the derived $p$-completion of $A$ as an abelian group, then the kernel of $A \to A^{\wedge}$ is the quotient of a $\mathbf{Q}_p$-vector space\footnote{This claim follows from the general fibre sequence
\[ \mathrm{RHom}(\mathbf{Q}_p,A) \to A \to \widehat{A} := \lim_n \mathrm{Kos}(A;p^n)\]
as well as the definition $A^{\wedge} := H^0(\widehat{A})$ of the derived $p$-completion as an abelian group.}.
\end{proof}

The above settles $(\ast)$ affirmatively in weight $1$ over an algebraically closed field. Over general perfect fields, $(\ast)$ has a negative answer in weight $1$:

\begin{example}
Choose a finite field $k$ and an elliptic curve $E/k$ such that $E(k)$ contains an element of order $p^2$. Identifying $E$ as $\mathrm{Pic}^0(E)$, we obtain a line bundle $L \in \mathrm{Pic}(X)$ of order $p^2$. We shall show that $M=L^{\otimes p}$ admits a crystal structure relative to $W(k)$ and is nonzero in $\mathrm{Pic}(E)^{\wedge}$.

By deformation theory, we can choose a lift $\mathcal{E}/W(k)$ of $E$ and a lift $\mathcal{L} \in \mathrm{Pic}(\mathcal{E})$ of $L$. Now $\mathcal{L}$ has degree $0$ on the special (and hence generic) fibre of $\mathcal{E}/W$, so the Hodge Chern class $c_1(\mathcal{L})$ vanishes on the generic fibre and hence also in the $p$-torsionfree group $H^1(\mathcal{E}, \Omega^1_{\mathcal{E}/W})$. Consequently, we can choose a (necessarily flat) connection $\nabla_1$ on $\mathcal{L}$ for $\mathcal{E}/W$. Taking the $p$-th tensor power gives a flat connection $\nabla_2$ on $\mathcal{M} = \mathcal{L}^{\otimes p}$; moreover, this latter connection has trivial $p$-curvature modulo $p$ (as this is true for the $p$-th tensor power of any line bundle with flat connection on $E/k$). Consequently, the pair $(\mathcal{M},\nabla_2)$ provides a crystal structure (relative to $W$) on the line bundle $M = L^{\otimes p}$ on $E$. On the other hand, by assumption on the order of $L$, the line bundle $M \in \mathrm{Pic}(E)$ has order $p$. As $k$ is finite, the group $\mathrm{Pic}(E)$ is finitely generated, so $M$ must also have nonzero image in the $p$-adic completion $\mathrm{Pic}(E)^{\wedge}$, and thus also in $H^2_\Syn(E,\mathbf{Z}_p(1))$ by the injectivity of \eqref{PictoSyn} explained above. Thus, the bundle $M$ on $E$ provides a negative answer to $(\ast)$ in weight $1$ over a finite field.
\end{example}

\end{enumerate}

\end{remark}

\section{Cohomology of jet spaces}

This section is preparatory in nature: the goal is to record a calculation (Theorem~\ref{torsionboundjet}) that will be used in \S \ref{sec:VanPrismChern}  to show that Chern classes of vector bundles admitting prismatic crystal structures have uniformly bounded torsion.

In this section, fix a bounded prism $(A,I)$ with face $\overline{A}$. It will be convenient for technical reasons\footnote{Strictly speaking, passage to derived algebraic geometry is not essential to our discussion of arithmetic jet spaces. Indeed, we only need to contemplate $JX$ when $X$ is the stack of vector bundles. In this case, $JX$ is classical and even pro-smooth over $A$ (Example~\ref{JetVect}).} to formulate our basic definitions in the context of derived algebraic geometry; we shall use the theory of animated $\delta$-rings from \cite[Appendix B]{BhattLuriePrismatization}. Given an animated commutative ring $R$,  any map $A \to R$ refines uniquely to a $\delta$-map $A \to W(R)$. Moreover, if $R$ is $(p,I)$-nilpotent, then the map $A \to W(R)$ turns $W(R)$ into a $(p,I)$-complete $A$-algebra by \cite[Lemma 3.3]{BhattLuriePrismatization}. This motivates the following definition:

\begin{definition}[The arithmetic jet space]
\label{defJetspace}
Given a presheaf $X$ on $(p,I)$-nilpotent animated $A$-algebras, its {\em arithmetic jet space} $JX$ is the presheaf on $(p,I)$-nilpotent animated $A$-algebras determined by 
\[ JX(R) = X(W(R)) = \mathrm{Map}_A(\mathrm{Spf}(W(R)), X).\]
The Frobenius on $W(-)$ endows $JX$ with the natural structure of a $\delta$-presheaf over $A$. There is a natural map 
\[ \pi:JX \to X\]
 over $\mathrm{Spf}(A)$, induced by the projection $W(R) \to R$; we call this the {\em structure map} for $JX$.
\end{definition}

In the language of \cite{Bhatt2022LecturesFGauges}, the presheaf $JX$ and the map $\pi$ are obtained via transmutation from the ring stack $W(\mathbf{G}_a)$ and the restriction map $W(\mathbf{G}_a) \to \mathbf{G}_a$ on $(p,I)$-nilpotent $A$-algebras.

\begin{example}[The jet space of an affine scheme]
If $X = \mathbf{A}^1_{\mathrm{Spf}(A)}$, then $JX = \mathrm{Spf}(R)$, where $R$ is the co-ordinate ring of the base change $W_{\mathrm{Spf}(A)}$ of the Witt vector scheme $W$ (or, equivalently, the $(p,I)$-completion of the free $\delta$-ring on $1$ generator over $A$). The structure map is simply the restriction map $W_{\mathrm{Spf}(A)} \to \mathbf{A}^1_{\mathrm{Spf}(A)}$ on the Witt vectors. 

More generally, if. $X = \mathrm{Spf}(S)$ for a $(p,I)$-complete animated $A$-algebra $S$, then $JX = \mathrm{Spf}(F(S))$, where $F$ is the left adjoint of the forgetful functor from $(p,I)$-complete animated $\delta$-$A$-algebras  to $(p,I)$-complete animated $A$-algebras (see \cite[Proposition A.20]{BhattLuriePrismatization} for this functor).
\end{example}

\begin{example}[The jet space of $\mathrm{BGL}_n$]
\label{JetVect}
Fix an integer $r > 0$. Let $X=\mathrm{Vect}_r(-/A)$ be the stack of rank $r$ vector bundles on $p$-nilpotent animated $A$-algebras. Then for any such ring $R$, since the animated $A$-algebra $W(R)$ is $(p,I)$-complete, we have
\[ JX(R) = \mathrm{Vect}_r(W(R)) = \lim_k \mathrm{Vect}_r(W_k(R)),\]
where the inverse limit is along the restriction maps. It follows from this description and deformation theory that the projection $\mathrm{Vect}_r(W(R)) \to \mathrm{Vect}_r(R)$ is an isomorphism on $\pi_0$; moreover, if $p=0$ on $\pi_0(R)$, the restriction maps $W_{k+1}(R) \to W_k(R)$ are square-zero extensions (see \cite[Proof of Lemma 3.2 (2)]{BhattLuriePrismatization}), so the fibre of  $\mathrm{Vect}_r(W(R)) \to \mathrm{Vect}_r(R)$ over $E \in \mathrm{Vect}_r(R)$ is given by an (infinite) iterated extension of classifying stacks of vector bundles. It follows from this description that the natural map
\[ \pi:JX \to X\]
is pro-smooth. In particular, $JX$ is $A$-flat. 
\end{example}

The relevance of this construction to the prismatic theory is via the following:

\begin{construction}[The jet-to-prismatization map] 
For any presheaf $X$ on $(p,I)$-nilpotent animated $A$-algebras, write $\overline{X} = X \times_{\mathrm{Spf}(A)} \mathrm{Spf}(\overline{A})$ for the induced presheaf on $p$-nilpotent animated $A$-algebras. Its  derived prismatization $(\overline{X}/A)^\Prism$ is the presheaf on $(p,I)$-nilpotent animated $A$-algebras given by 
\[ (\overline{X}/A)^\Prism(R) = \mathrm{Map}_{\overline{A}}(\mathrm{Spf}(\overline{W(R)}), \overline{X}),\] 
as in \cite[\S 7]{BhattLuriePrismatization}. Then there is a natural map
\[ \psi:JX \to (\overline{X}/A)^\Prism \]
of presheaves on $(p,I)$-nilpotent animated $A$-algebras,  induced via functoriality from the obvious map $W(R) \to \overline{W(R)} = W(R)/I$ of animated $A$-algebras. This induces a  inducing a pullback map
\begin{equation}
\label{jettoprism}
 \psi^*:R\Gamma_\Prism(\overline{X}/A)\{i\} := R\Gamma( (X/A)^\Prism, \mathcal{O})\{i\} \to R\Gamma(JX, \mathcal{O})\{i\}
 \end{equation}
for each integer $i$
\end{construction}

The precise goal of this section is to specialize to the case where $X=\mathrm{Vect}_r(-/A)$ is the stack of rank $r$ vector bundles, and then understand the effect of $\psi^*$ on the universal Chern classes.   To formulate our theorem, we introduce the following numbers:

\begin{definition}
\label{defwint}
For each integer $i$, define a positive integer $w(\mathbf{Z}_p,i)$ as follows:
\begin{itemize}
\item Say $p=2$. If $i$ is even, set $w(\mathbf{Z}_p,i) = 2^{v_2(i)+2}$; if $i$ is odd, set $w(\mathbf{Z}_p,i) = 2$.
\item Say $p$ is odd. If $(p-1) \nmid i$, set $w(\mathbf{Z}_p, i) = 1$; if not, then  set $w(\mathbf{Z}_p,i) = p^{v_p(i)+1}$.
\end{itemize}
Here $v_p(-)$ denotes the $p$-adic valuation.
\end{definition}

The integer $w(\mathbf{Z}_p,i)$  introduced above can be interpreted as the $p$-part of the denominator of $B_i/i$, where $B_i$ is the $i$-th Bernoulli number, see \cite[Theorem B.4]{MilnorStasheff}. On the other hand, they arise for us via an explicit interpretation in terms of Galois cohomology, see Lemma~\ref{InertiaInvts}.  Our goal in this section is to prove the following result:

\begin{theorem}
\label{torsionboundjet}
Fix an integer $r \geq 0$ and let $X = \mathrm{Vect}_r(-/A)$ be the stack of rank $r$ vector bundles.  For each $0 \leq i \leq r$, the image of the universal Chern class $c_i \in H^{2i}_\Prism(\overline{X}/A)\{i\}$ in $H^{2i}(JX, \mathcal{O}/p^n)\{i\}$ under the map \eqref{jettoprism} is killed by the positive integer $w(\mathbf{Z}_p, i)$ from Definition~\ref{defwint}  for all $n \geq 0$.
\end{theorem}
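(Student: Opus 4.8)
The plan is to make the image of $c_i$ under \eqref{jettoprism} explicit enough to extract its exact torsion order, and then to recognize that order as the Galois-cohomological quantity $w(\mathbf{Z}_p,i)$ of Lemma~\ref{InertiaInvts}.

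First I would reduce to a universal model, following the strategy behind Deligne's Theorem~\ref{DeligneCalc}. The statement for $X = \mathrm{Vect}_r(-/A)$ can be tested after pulling back along the flag-bundle (``splitting'') map $t:\prod_{j=1}^r \mathrm{B}\mathbf{G}_m \to X$ over $A$ classifying a sum of line bundles: the formation of arithmetic jet spaces and the map $\psi$ are functorial (Definition~\ref{defJetspace} and the construction of $\psi$), and $\psi^*$ is a ring map, so $\psi^*(c_i)$ pulls back to the $i$-th elementary symmetric function in $\psi^*(x_1),\dots,\psi^*(x_r)$, where $x_j$ is the first Chern class of the $j$-th universal line bundle. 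Thus the essential computation is that of $\psi^*$ on products and powers of first Chern classes of line bundles; here one uses the concrete description of $J\mathbf{G}_m$ as the unit group of the Witt vectors, together with its Teichm\"uller/principal-unit decomposition, to identify $\psi^*(x_j)$ and the relevant cup products. One subtlety is that, unlike prismatic cohomology, the $\mathcal{O}/p^n$-cohomology of the jet spaces is not detected by the maximal torus, so I would keep track of the ``new'' cohomology of $JX$ not seen on $\prod_j \mathrm{B}\mathbf{G}_m$, using that it is concentrated in the pro-unipotent fibres of $\pi:JX\to X$ (Example~\ref{JetVect}) and that $H^{>0}(X,\mathcal{O})=0$ (Theorem~\ref{DeligneCalc}).

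The source of the torsion bound is the $\delta$-structure on $JX$ (Definition~\ref{defJetspace}). The Frobenius $\varphi_{JX}$ carries the tautological bundle on $JX$ to its Witt-vector Frobenius twist, and after inverting $p$ this twist becomes $K$-theoretically trivial (the Frobenius on $W(-)$ being an isomorphism onto a $p$-divisible sublattice rationally, in the spirit of Remark~\ref{LattIndep} and the convergent-isocrystal case of Remark~\ref{SynVanCharpRemark}(2)). Since $\varphi_{JX}$ is $\varphi_A$-semilinear and acts on the Breuil--Kisin twist $\mathcal{O}\{i\}$ by the $i$-th power of a distinguished element, the class $\psi^*(c_i)$ is fixed by a semilinear operator that becomes an isomorphism after inverting $p$; hence $\psi^*(c_i)$ is $p$-power torsion, and its order modulo $p^n$ is controlled by the kernel/cokernel of that operator — a syntomic-type invariant which by Lemma~\ref{InertiaInvts} is exactly the inertia invariants of the $i$-th Tate twist, hence killed by $w(\mathbf{Z}_p,i)$. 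Passing to coefficients $\mathcal{O}/p^n$ (rather than arguing integrally) is forced here by the non-separatedness of these cohomology groups, as flagged in the statement of Theorem~\ref{MainThmPrismIntro}.

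The main obstacle is this last step: pinning down the \emph{exact} denominator. It is comparatively easy to see $\psi^*(c_i)$ is $p$-power torsion; the work lies in showing the order is precisely $w(\mathbf{Z}_p,i)$ and no larger, which is where the combinatorics of Definition~\ref{defwint} — the $p=2$ versus $p$-odd dichotomy and the appearance of $v_p(i)$ — must be matched against the structure of $\ker(\chi^i-1)$ on $\mathbf{Z}/p^n$, with $\chi$ the cyclotomic character, as in Lemma~\ref{InertiaInvts}. A secondary technical point is the bookkeeping of $(p,I)$-completions and of the pro-system defining $JX$ (pro-smoothness over $A$, Example~\ref{JetVect}) throughout the cohomology computations.
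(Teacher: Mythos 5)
Your proposal has two genuine gaps, and the second is fatal as written. The heart of the matter is the mechanism that produces the bound $w(\mathbf{Z}_p,i)$. You assert that $\psi^*(c_i)$ is fixed by a Frobenius-semilinear operator, that this forces it to be $p$-power torsion, and that its order is "controlled by the kernel/cokernel of that operator — a syntomic-type invariant which by Lemma~\ref{InertiaInvts} is exactly the inertia invariants of the $i$-th Tate twist." But Lemma~\ref{InertiaInvts} is a statement about the inertia subgroup of $\mathrm{Gal}(\overline{\mathbf{Q}_p}/\mathbf{Q}_p)$, and over a general bounded prism $(A,I)$ there is no Galois group acting on $H^{2i}(JX,\mathcal{O}/p^n)\{i\}$ at all; you never build the bridge from the $\delta$-structure on $JX$ to Galois cohomology. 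The paper's bridge is precisely the content of the proof: $J\mathrm{BGL}_r$ descends to $\mathbf{Z}_p$ as a $\delta$-stack, so over the Fontaine prism (Notation~\ref{FontainePrism}) one has $R\Gamma(JX,\mathcal{O}/p^n)\{i\}\simeq M\widehat{\otimes}^L_{\mathbf{Z}/p^n}W_n(\mathcal{O}_C^\flat)\{i\}$ with $M\in D(\mathbf{Z}/p^n[F])$ and a Galois action on the second factor; one checks $d$-torsionfreeness (Lemma~\ref{basechangefrob1}), inverts $d$, identifies the Breuil--Kisin twist with the Tate twist $(i)$, and proves that the $F$-fixed points form an \emph{unramified} Galois module (Corollary~\ref{UnramifiedCrit}, resting on the classification of injective $\mathbf{Z}/p^n[F]$-modules in Lemma~\ref{ClassifyInjectives} and Riemann--Hilbert over $\mathbf{F}_p$); only then can Lemma~\ref{InertiaInvts} be applied, and one still must descend from this prism back to an arbitrary bounded prism by a faithfully flat base change argument using transversality to get the needed injectivity. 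None of this machinery, nor any substitute for it, appears in your sketch. Moreover the inference "fixed by a semilinear operator that becomes an isomorphism after inverting $p$, hence $p$-power torsion" is not valid: the unit class $1\in H^0$ is Frobenius-fixed and non-torsion, and the claimed $K$-theoretic triviality of the Witt-Frobenius twist after inverting $p$ is asserted rather than proved.

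The first step of your plan is also broken, by your own admission. To deduce that $w(\mathbf{Z}_p,i)$ kills $\psi^*(c_i)$ in $H^{2i}(JX,\mathcal{O}/p^n)\{i\}$ from the corresponding statement after pullback along $J(\prod_j B\mathbf{G}_m)\to JX$, you need that pullback to be injective on this cohomology; you then concede that the $\mathcal{O}/p^n$-cohomology of jet spaces is \emph{not} detected by the maximal torus, and "keeping track of the new cohomology concentrated in the pro-unipotent fibres" (Example~\ref{JetVect}, Theorem~\ref{DeligneCalc}) is not an argument that the component of $\psi^*(c_i)$ invisible to the torus is also killed by $w(\mathbf{Z}_p,i)$. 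Note that the paper avoids this entirely: it never computes $H^*(JX,\mathcal{O}/p^n)$ and uses no splitting principle in this theorem, treating the jet-space cohomology as a black box whose only relevant feature is its $\delta$-equivariant descent to $\mathbf{Z}_p$. Finally, a smaller misreading: the theorem only requires that $w(\mathbf{Z}_p,i)$ annihilates the image, not that this is the exact order, so the difficulty you identify ("pinning down the exact denominator") is not where the real work lies.
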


Theorem~\ref{torsionboundjet} refers to Chern classes in prismatic cohomology; these were defined in \cite[\S 9]{BhattLurieAPC}, see also the forthcoming Construction~\ref{PrismChernClassDef}. We shall prove Theorem~\ref{torsionboundjet} via an approximation argument that reduces to the case of Fontaine's $A_{\inf}$ prism, introduced next.

\begin{notation}[The Fontaine prism]
\label{FontainePrism}
Let $C = \mathbf{C}_p$ be a completed algebraic closure of $\mathbf{Q}_p$, so $C$ has a natural action of $G_{\mathbf{Q}_p} := \mathrm{Gal}(\overline{\mathbf{Q}_p}/\mathbf{Q}_p)$. Associated to $C$, we have $\mathcal{O}_C$, $\mathcal{O}_C^\flat$ and $\theta:A_{\inf} := W(\mathcal{O}_C^\flat) \to \mathcal{O}_C$ as usual. The pair $(A_{\inf}, \ker(\theta))$ is a perfect prism corresponding to the perfectoid ring $\mathcal{O}_C$. We use $d \in \ker(\theta)$ to denote a generator (though we do not use a specific choice). For $M \in D(A_{\inf})$, we write $\widehat{M}$ for the derived $(p,d)$-adic completion of $M$. 
\end{notation}

To prove Theorem~\ref{torsionboundjet} over $A_{\inf}$,  we do not use the geometry of $JX$. On the other hand, we crucially use that this $A_{\inf}$-stack is actually defined over $\mathbf{Z}_p$, which yields a natural $G_{\mathbf{Q}_p}$-action on the map. Granting this, the rest of the proof is Galois theoretic, comparing Frobenius modules over $\mathbf{F}_p$ and $\mathcal{O}_C^\flat$ with $\mathbf{Z}/p^n$-coefficients; the relevant calculations are the subject of \S \ref{ss:GaloisCoh}. The proof of Theorem~\ref{torsionboundjet} appears in \S \ref{ss:ProofTorsionBoundJet}.

\subsection{Some Galois cohomology}
\label{ss:GaloisCoh}

\begin{notation}
For a perfect ring $R$ of characteristic $p$, we reserve the notation $W_n(R)[F]$ to refer to the ring of Frobenius operators. When $R = \mathbf{F}_p$, this coincides with the polynomial ring in one variable $F$ over $\mathbf{Z}/p^n$. In general, $W_n(R)[F]$ is not commutative.
\end{notation}

We need a classification of injective Frobenius modules over $\mathrm{Spec}(\mathbf{F}_p)$ for our argument.

\begin{lemma}
\label{ClassifyInjectives}
Let $M \in \mathrm{Mod}_{\mathbf{Z}/p^n[F]}$, regarded as a Frobenius module over $\mathrm{Spec}(\mathbf{F}_p)$ with $\mathbf{Z}/p^n$-coefficients. Assume $M$ is an injective object. Then $M$ is a direct sum of objects of the following type:
\begin{enumerate}
\item An algebraic Frobenius module over $\mathrm{Spec}(\mathbf{F}_p)$, i..e, a $\mathbf{Z}/p^n[F]$-module where every element is annihilated by a monic polynomial in $F$.
\item A filtered colimit of copies of finite free $\mathbf{Z}/p^n[F]$-modules (of rank $1$, even).
\end{enumerate}
\end{lemma}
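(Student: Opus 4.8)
The plan is to analyze the structure of an injective object $M$ in $\mathrm{Mod}_{\mathbf{Z}/p^n[F]}$ by a devissage in $n$ combined with a classification in the case $n=1$. First I would reduce to understanding injectives over $\mathbf{F}_p[F]$, i.e., the case $n=1$: given an injective $M$ over $\mathbf{Z}/p^n[F]$, the submodule $M[p]$ is a module over $\mathbf{F}_p[F]$, and one checks (using that $\mathbf{Z}/p^n[F]$ is coherent and that $M$ is injective) that $M$ is an extension of injective $\mathbf{Z}/p^{n-1}[F]$- and $\mathbf{F}_p[F]$-modules in a way that lets one build up the asserted direct sum decomposition inductively; the key point is that both classes of modules in (1) and (2) are closed under extension and that injectivity forces such extensions to split after passing to the relevant associated graded pieces. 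So the heart of the matter is the $n=1$ case.

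For $n=1$, the ring $R = \mathbf{F}_p[F]$ is a (left and right) principal ideal domain — indeed a non-commutative one, but one with a Euclidean-type division algorithm on the degree in $F$ — and its module theory is close to that of a PID. I would invoke (or reprove) the structure theory: every finitely generated $R$-module is a finite direct sum of a free module and cyclic torsion modules $R/Rf$ with $f$ monic, the latter being exactly the finite-dimensional (``algebraic'') Frobenius modules over $\mathbf{F}_p$. An injective $R$-module is then a direct sum of indecomposable injectives; the indecomposable injectives over such a PID-like ring are the injective hull of $R$ (a division-ring-like localization, which one identifies with a filtered colimit of copies of $R$ via multiplication-by-monic-polynomials transition maps — this gives type (2)) and the injective hulls $E(R/Rf)$ for $f$ monic irreducible, which are the Prüfer-type modules $\colim_k R/Rf^k$ and are algebraic, hence of type (1). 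Here I would be careful that over a non-commutative PID one still has: injective = divisible, unique decomposition of injectives into indecomposables (Matlis-type theory works for Noetherian rings), and a clean list of indecomposable injectives.

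The main obstacle I anticipate is the non-commutativity of $\mathbf{F}_p[F]$ (and of $\mathbf{Z}/p^n[F]$ in the general case), which means I cannot simply cite commutative Matlis theory. I would need to either cite the appropriate non-commutative analogue — $\mathbf{F}_p[F]$ is a principal left and right ideal domain, hence left and right Noetherian, so Matlis's classification of indecomposable injectives by prime ideals (equivalently by isomorphism classes of uniform cyclic modules) applies — or argue more concretely: an injective $M$ is $F$-divisible (since $F$ acts injectively on $R$ as a nonzerodivisor and $M$ is injective, $M \xrightarrow{F} M$ is surjective), and also $p$-divisible in the higher $n$ case; one then peels off, for each monic irreducible $f$, the ``$f$-power-torsion'' submodule $M_f = \{m : f^k m = 0, \text{ some } k\}$, shows each $M_f$ is an injective algebraic module (type (1)), and shows the quotient $M / \bigoplus_f M_f$ is a torsion-free divisible — hence injective — module over the division ring of fractions of $R$, which is then a direct sum of copies of that fraction field, each realized as $\colim(R \xrightarrow{g_1} R \xrightarrow{g_2} \cdots)$ for suitable monics (type (2)). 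Assembling these, plus the $n$-devissage, gives the lemma.
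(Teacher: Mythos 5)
Two points, one conceptual and one a genuine gap.

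First, your worry about non-commutativity is unfounded, and missing this means missing the key observation that drives the paper's proof. A Frobenius module over $\mathrm{Spec}(\mathbf{F}_p)$ with $\mathbf{Z}/p^n$-coefficients is a module over $W_n(\mathbf{F}_p)[F]=\mathbf{Z}/p^n[F]$, and the semilinearity is with respect to the Witt vector Frobenius of $W_n(\mathbf{F}_p)=\mathbf{Z}/p^n$, which is the \emph{identity}. So $\mathbf{Z}/p^n[F]$ is the honest commutative polynomial ring in one variable (the paper's notation section states this explicitly), and in particular it is commutative noetherian, even Gorenstein, for every $n$. There is no need for non-commutative Matlis theory or for treating $\mathbf{F}_p[F]$ as a twisted PID: ordinary Matlis theory applies directly, for all $n$ at once. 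The paper's argument is exactly this: every injective over $R=\mathbf{Z}/p^n[F]$ is a direct sum of indecomposable injectives $E(\kappa(x))$ for $x\in\mathrm{Spec}(R)$; since $\mathrm{Spec}(R)\simeq\mathrm{Spec}(\mathbf{F}_p[F])$, the points are either closed (a monic polynomial $g$ irreducible mod $p$) or generic, and using that $R$ is Gorenstein one computes the hulls via local cohomology (fibre of a map of localizations): $E(s)\simeq R[1/g]/R$, which is algebraic, and $E(\eta)$ is the total ring of fractions, a filtered colimit of copies of $R$.

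Second, your reduction to $n=1$ by devissage is where the real gap lies. The classes (1) and (2) in the statement are classes of $\mathbf{Z}/p^n[F]$-modules, and knowing that $M[p]$ decomposes as an injective $\mathbf{F}_p[F]$-module and $M/M[p]\simeq pM$ as an injective $\mathbf{Z}/p^{n-1}[F]$-module does not reassemble into the asserted decomposition of $M$ over $\mathbf{Z}/p^n[F]$: a filtered colimit of free $\mathbf{Z}/p^{n-1}[F]$-modules is neither of type (1) nor of type (2) over $\mathbf{Z}/p^n[F]$ (it is $p^{n-1}$-torsion but no element is killed by a monic polynomial in $F$), so "both classes are closed under extension'' is not true in the mixed sense your induction needs, and "injectivity forces such extensions to split after passing to the relevant associated graded pieces'' is not an argument — it is essentially the content of Matlis's theorem that you would have to reprove. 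Your concrete $n=1$ argument (peel off $f$-power torsion, identify the torsion-free divisible quotient with a sum of copies of the fraction field) is fine and agrees in spirit with the paper's identification of the indecomposable injectives, but as written it only covers $n=1$ and explicitly breaks for $n>1$ (there is no division ring of fractions of $\mathbf{Z}/p^n[F]$). The fix is simply to drop the devissage and run the Matlis-theoretic argument directly over the commutative noetherian ring $\mathbf{Z}/p^n[F]$, as the paper does.
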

\begin{proof}
Let $R := \mathbf{Z}/p^n[F]$ and $\overline{R} =  R_{red} = \mathbf{Z}/p[F]$. Since the Frobenius on $\mathrm{Spec}(\mathbf{F}_p)$ is the identity map, these are commutative noetherian rings. Thus, each injective $R$-module is a direct sum of indecomposable injective $R$-modules. Moreover, again because $R$ is noetherian, each indecomposable injective $R$-module is (non-canonically) the injective hull $E(x)$ of the residue field $\kappa(x)$ of $R$ at a prime ideal $x \in \mathrm{Spec}(R)$ by \cite[Tag 08YA]{StacksProject}. As we have a homeomorphism $\mathrm{Spec}(R) \simeq \mathrm{Spec}(\overline{R})$, such modules fall into two types, depending on whether $x$ is closed or generic, and corresponding to (1) and (2) in the lemma:
\begin{enumerate}
\item For a closed point $s \in \mathrm{Spec}(R)$ corresponding to a monic polynomial $g = g(F) \in R$ that is irreducible modulo $p$, the injective hull $E(s)$ identifies with $R[1/g]/R$. In particular, each element of $E(s)$ is killed by a positive power of $g$ in this case, so $E(s)$ is an algebraic Frobenius module. 
\item For the generic point $\eta \in \mathrm{Spec}(R)$,  the injective hull $E(\eta)$ is the total ring of fractions of $R$, i.e., $E(\eta)$ is obtained by inverting all monic polynomials in $R$. In particular, $E(\eta)$ is a filtered colimit of copies of $R$ in this case.
\end{enumerate}
To prove these statements, we need a construction of the injective hull. For this, recall that the injective hull of the residue field at a prime ideal $\mathfrak{p}$ of a Gorenstein ring $S$ can be computed as (an approrpiate shift) of  $R\Gamma_{\mathfrak{p}S_{\mathfrak{p}}}(S_{\mathfrak{p}})$. The description of local cohomology as the fibre of a map between localizations then gives the desired statements in (1) and (2).
\end{proof}

Consider the the functor $G:D(\mathbf{Z}/p^n[F]) \to D(W_n(\mathcal{O}_C^\flat)[F])$ given by 
\[ M \mapsto G(M) := M \widehat{\otimes^L_{\mathbf{Z}/p^n}} W_n(\mathcal{O}_C^\flat).\]

\begin{lemma}
\label{basechangefrob1}
The functor $G$ is $t$-exact for the usual $t$-structure. Moreover, if $M$ is a discrete $\mathbf{Z}/p^n$-module, then $G(M)$ is $d$-torsionfree.
\end{lemma}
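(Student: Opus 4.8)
The plan is to disassemble $G$ into a flat base change followed by a derived $d$-adic completion and to handle each piece separately. First I would record that, since $\mathcal{O}_C^\flat$ is perfect, one has $W_n(\mathcal{O}_C^\flat) = W(\mathcal{O}_C^\flat)/p^n = A_{\inf}/p^n$ (using $V^n = p^n F^{-n}$ on the Witt vectors of a perfect ring), and that this is a \emph{flat} $\mathbf{Z}/p^n$-algebra because $A_{\inf}$ is $p$-torsionfree. Consequently the underived base change $N \mapsto N \otimes_{\mathbf{Z}/p^n} W_n(\mathcal{O}_C^\flat)$ is exact, so $- \otimes^L_{\mathbf{Z}/p^n} W_n(\mathcal{O}_C^\flat)$ is already $t$-exact; and since every complex in its image is $p^n$-torsion, the extra $(p,d)$-completion defining $G$ is just a derived $d$-adic completion. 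So the whole statement reduces to understanding derived $d$-adic completion on the essential image of this base change.

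The second step is to show that the base change of a discrete module is $d$-torsionfree. The key input is the short exact sequence $0 \to A_{\inf}/p^n \xrightarrow{\,d\,} A_{\inf}/p^n \to \mathcal{O}_C/p^n \to 0$, which is valid because $d$ is a nonzerodivisor on $A_{\inf}/p^n$ (this follows from $A_{\inf}$ being $p$- and $d$-torsionfree together with $A_{\inf}/d = \mathcal{O}_C$ being $p$-torsionfree) and $A_{\inf}/(p^n,d) = \mathcal{O}_C/p^n$. Tensoring this over $\mathbf{Z}/p^n$ with a discrete $\mathbf{Z}/p^n$-module $M$: since $\mathcal{O}_C$ is $\mathbf{Z}_p$-flat, $\mathcal{O}_C/p^n$ is $\mathbf{Z}/p^n$-flat, so $\mathrm{Tor}_1^{\mathbf{Z}/p^n}(M,\mathcal{O}_C/p^n) = 0$ and the sequence stays exact; the displayed injectivity of multiplication by $d$ then says precisely that $d$ is a nonzerodivisor on $M \otimes_{\mathbf{Z}/p^n} W_n(\mathcal{O}_C^\flat)$. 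Applied to the cohomology modules $H^i(M)$ of an arbitrary $M \in D(\mathbf{Z}/p^n[F])$, this shows the cohomology modules of $M \otimes^L_{\mathbf{Z}/p^n} W_n(\mathcal{O}_C^\flat)$ are all $d$-torsionfree.

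It then remains to check that derived $d$-adic completion is $t$-exact on complexes of $W_n(\mathcal{O}_C^\flat)$-modules with $d$-torsionfree cohomology, and that it carries discrete $d$-torsionfree modules to $d$-torsionfree modules. For a single discrete $d$-torsionfree $N$, the Koszul complexes $\mathrm{Kos}(N;d^k)$ are just $N/d^kN$ in degree $0$, the tower $(N/d^kN)_k$ has surjective transition maps, and so $R\lim$ introduces no $\lim^1$: the derived completion $N^\wedge_d$ is the classical completion $\lim_k N/d^kN$, concentrated in degree $0$, and an elementary argument (if $dx=0$ in $\lim_k N/d^kN$, writing $x$ through representatives $n_k$ forces $n_k \in d^{k-1}N$, hence $x=0$) shows $N^\wedge_d$ is again $d$-torsionfree. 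To pass to a general complex $P^\bullet$ with $d$-torsionfree cohomology I would use that derived completion is an exact functor (a Bousfield localization) which commutes with all limits: dévissage along the Postnikov filtration settles bounded $P^\bullet$, and then writing $P^\bullet = R\lim_k \tau^{[-k,m]}P^\bullet$ and noting that the induced tower of completions is Mittag--Leffler in each cohomological degree (the cohomology stabilizes) gives $H^i(P^{\bullet\,\wedge}_d) \cong (H^iP^\bullet)^\wedge_d$ in general. Taking $P^\bullet = M \otimes^L_{\mathbf{Z}/p^n} W_n(\mathcal{O}_C^\flat)$ yields both conclusions at once: $t$-exactness of $G$, and, for $M$ discrete, an identification of $G(M)$ with the classical $d$-adic completion of the $d$-torsionfree module $M \otimes_{\mathbf{Z}/p^n} W_n(\mathcal{O}_C^\flat)$, hence $d$-torsionfreeness.

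I expect the only genuinely delicate point to be this last bootstrap from bounded to unbounded complexes, where one must verify that the relevant $R\lim$ produces no spurious higher cohomology; this is exactly where the $d$-torsionfreeness of the cohomology (giving the Mittag--Leffler condition) is essential, and everything else is routine commutative algebra of $A_{\inf}/p^n$.
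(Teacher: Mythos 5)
Your argument is correct, but it is organized differently from the paper's. The paper never identifies the completed object explicitly: for discreteness it reduces to $n=1$ via the $p$-adic filtration of $M$ (where $M\otimes_{\mathbf{F}_p}\mathcal{O}_C^\flat$ is a free $\mathcal{O}_C^\flat$-module whose derived $d$-completion is visibly discrete), and for $d$-torsionfreeness it computes $G(M)\otimes^L_{W_n(\mathcal{O}_C^\flat)}W_n(\mathcal{O}_C^\flat)/d\simeq M\otimes^L_{\mathbf{Z}/p^n}\mathcal{O}_C/p^n$, which is discrete by the same flatness you invoke, so the $d$-torsion of $G(M)$ (the $H^{-1}$ of this reduction) vanishes. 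You instead prove $d$-torsionfreeness of the \emph{uncompleted} base change from the sequence $0\to A_{\inf}/p^n\xrightarrow{d}A_{\inf}/p^n\to\mathcal{O}_C/p^n\to 0$, then identify derived with classical $d$-completion degreewise and check that classical completion preserves torsionfreeness. Your route is more elementary and yields a little more, namely $H^i(G(M))\cong\bigl(H^i(M)\otimes_{\mathbf{Z}/p^n}W_n(\mathcal{O}_C^\flat)\bigr)^{\wedge}_d$; the paper's mod-$d$ trick buys freedom from all $\lim/\lim^1$ bookkeeping. One caveat: your Postnikov step with $\tau^{[-k,m]}$ only treats complexes bounded above, while $t$-exactness (and the coconnective complex $R\Gamma(J\mathrm{BGL}_n,\mathcal{O}/p^n)$ in the application) also needs the unbounded-above case. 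This is easily repaired, and in fact you can skip Postnikov towers entirely: for $P$ with $d$-torsionfree cohomology, $H^i(P\otimes^L\mathrm{Kos}(W_n(\mathcal{O}_C^\flat);d^k))=H^i(P)/d^k$ with surjective transition maps in $k$, so the Milnor sequence for $R\lim_k$ gives $H^i(\widehat{P})=\lim_k H^i(P)/d^k$ in all degrees with no boundedness hypothesis.
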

\begin{proof}
For the first part, it is enough to show that if $M$ is discrete, then the same holds true for $G(M)$. When $n=1$, this is clear as the  derived $d$-completion of a free $\mathcal{O}_C^\flat$-module is concentrated in degree $0$. One then reduces to the case $n=1$ by filtering $M$ for its $p$-adic filtration and using the flatness of $\mathbf{Z}/p^n \to W_n(\mathcal{O}_C^\flat)$. 

For the second part, it suffices to show that $G(M) \otimes^L_{W_n(\mathcal{O}_C^\flat)} W_n(\mathcal{O}_C^\flat)/d$ is discrete for $M$ discrete. But once we kill $d$, there is no completion, so we simply have 
\[ G(M) \otimes^L_{W_n(\mathcal{O}_C^\flat)}  W_n(\mathcal{O}_C^\flat)/d \simeq M \otimes_{\mathbf{Z}/p^n}^L W_n(\mathcal{O}_C^\flat)/d\]
and the claim follows from that flatness of $\mathbf{Z}/p^n \to W_n(\mathcal{O}_C^\flat)/d$. 
\end{proof}

Using this exactness property of $G$, we can define a new functor $S:\mathrm{Mod}_{\mathbf{Z}/p^n[F])} \to \mathrm{Mod}_{\mathbf{Z}/p^n}(\mathrm{Gal}(\overline{\mathbf{Q}_p}/\mathbf{Q}_p))$ by the formula
\[ S(M) = \left(G(M)[1/d]\right)^{F=1},\]
where $(-)^{F=1}$ is interpreted in the non-derived sense.

\begin{lemma}
\label{basechangefrob2}
For any $M \in \mathrm{Mod}_{\mathbf{Z}/p^n[F]}$, the $\mathrm{Gal}(\overline{\mathbf{Q}_p}/\mathbf{Q}_p)$-module $S(M)$ is unramified.
\end{lemma}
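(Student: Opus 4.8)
The goal is to show that the inertia subgroup $I_{\mathbf{Q}_p}\subset G_{\mathbf{Q}_p}$ acts trivially on $S(M)$. The plan is to reduce, using the classification of injective Frobenius modules in Lemma~\ref{ClassifyInjectives}, to the two basic types, and then to observe that for the ``algebraic'' ones the formation of $S$ only sees the constant field $\overline{\mathbf{F}_p}\subset\mathcal{O}_C^\flat$ (on which $I_{\mathbf{Q}_p}$ acts trivially), while the ``ind-free'' ones contribute nothing. First, the reduction: the functor $S=(-)^{F=1}\circ(-)[1/d]\circ G$ is left exact, since $G$ is exact on discrete modules by Lemma~\ref{basechangefrob1}, localization is exact, and $(-)^{F=1}=\ker(F-1)$ is left exact; hence embedding $M$ into an injective object $I$ yields an inclusion of $G_{\mathbf{Q}_p}$-modules $S(M)\hookrightarrow S(I)$, and a submodule of an unramified module is unramified, so we may assume $M=I=\bigoplus_\alpha I_\alpha$ with each $I_\alpha$ of type (1) or (2) of Lemma~\ref{ClassifyInjectives}. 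Both types are $\mathbf{Z}/p^n$-flat (type (1) is finite free over $\mathbf{Z}/p^n$; type (2) is a localization of $\mathbf{Z}/p^n[F]$), so each $G(I_\alpha)=I_\alpha\otimes_{\mathbf{Z}/p^n}W_n(\mathcal{O}_C^\flat)$ is $d$-torsion-free, using that $d$ is a nonzerodivisor on $W_n(\mathcal{O}_C^\flat)=A_{\inf}/p^n$ and that $d$-adic completion preserves $d$-torsion-freeness. Therefore the jointly injective projections $G(I)\hookrightarrow\prod_\alpha G(I_\alpha)$ stay injective after inverting $d$ and after applying $(-)^{F=1}$, giving $S(I)\hookrightarrow\prod_\alpha S(I_\alpha)$; since a product of unramified modules is unramified, it suffices to treat a single indecomposable summand.

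\emph{The ind-free case.} If $N$ is of type (2), the Frobenius operator on $N$ is multiplication by the polynomial variable $F$, and $F-1$, being a monic polynomial, is already invertible in $N$; a degree argument in $F$ then shows that $F-1$ remains injective on $G(N)[1/d]$, so $S(N)=\ker\bigl(F-1\mid G(N)[1/d]\bigr)=0$, which is trivially unramified.

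\emph{The algebraic case.} Let $N=\mathbf{Z}/p^n[F]/g^k$ with $g$ monic and irreducible mod $p$. If $g\equiv F\pmod p$ then $F$ is nilpotent on $N$ and $S(N)=0$ as above; otherwise the Frobenius operator is an automorphism of $N$, so $N$ is a unit-root Frobenius module over $\mathrm{Spec}(\mathbf{F}_p)$ with finite free underlying $\mathbf{Z}/p^n$-module. Since $N$ is $\mathbf{Z}/p^n$-flat and finite, $G(N)=N\otimes_{\mathbf{Z}/p^n}W_n(\mathcal{O}_C^\flat)$ and hence $G(N)[1/d]=N\otimes_{\mathbf{Z}/p^n}W_n(C^\flat)$, using the identity $W_n(\mathcal{O}_C^\flat)[1/d]=W_n(C^\flat)$, which holds because $A_{\inf}[1/d]^{\wedge}_p$ is $p$-torsion-free, $p$-complete, with residue ring the perfect field $C^\flat$, hence equals $W(C^\flat)$. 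By Artin--Schreier--Witt over the algebraically closed perfect fields $C^\flat$ and $\overline{\mathbf{F}_p}$, the map $F-1$ is surjective on $W_n(C^\flat)$ and on $W_n(\overline{\mathbf{F}_p})$ with kernel $\mathbf{Z}/p^n$, and $N\otimes_{\mathbf{Z}/p^n}W_n(\overline{\mathbf{F}_p})$ is spanned by its $F$-invariants, which form a finite free $\mathbf{Z}/p^n$-module; extending scalars to $W_n(C^\flat)$ and taking $F$-invariants one obtains a canonical $G_{\mathbf{Q}_p}$-equivariant identification
\[ S(N)=\bigl(N\otimes_{\mathbf{Z}/p^n}W_n(C^\flat)\bigr)^{F=1}=\bigl(N\otimes_{\mathbf{Z}/p^n}W_n(\overline{\mathbf{F}_p})\bigr)^{F=1}. \]
On the right-hand side the $G_{\mathbf{Q}_p}$-action is induced by the Galois action on $W_n(\overline{\mathbf{F}_p})$, which factors through $\mathrm{Gal}(\overline{\mathbf{F}_p}/\mathbf{F}_p)$; equivalently, $I_{\mathbf{Q}_p}$ fixes $W_n(\overline{\mathbf{F}_p})\subset W_n(\mathcal{O}_C^\flat)$. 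Hence $S(N)$ is unramified, which completes the argument.

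The step I expect to require the most care is the algebraic case, specifically the displayed identification: proving that after inverting $d$, passing from $\mathcal{O}_C^\flat$ to $C^\flat$ creates no $F$-invariants beyond those already defined over the constant field $\overline{\mathbf{F}_p}$ — this is essentially the étale-local triviality of the finite locally constant $\mathbf{Z}/p^n$-sheaf attached to the unit-root module $N$. The only other bookkeeping is the $d$-torsion-freeness invoked in the reduction step, which is what lets one control the $d$-adic completion implicit in $G$ when passing to an indecomposable summand.
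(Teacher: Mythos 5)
Your overall skeleton (left-exactness of $S$, embedding $M$ into an injective, the classification of Lemma~\ref{ClassifyInjectives}, and a separate treatment of the algebraic and ind-free parts) matches the paper, and your reduction to indecomposable summands via the injection $G(I)\hookrightarrow\prod_\alpha G(I_\alpha)$ is fine granted flatness over $\mathbf{Z}/p^n$. The genuine gap is in how you then treat the indecomposables, and it sits exactly where the paper has to work: neither indecomposable injective is finitely generated, so the completion implicit in $G$ cannot be ignored. For type (1) the indecomposable injective is $E(s)=\mathbf{Z}/p^n[F][1/g]/\mathbf{Z}/p^n[F]$ (in particular it is not finite free over $\mathbf{Z}/p^n$, contrary to your parenthetical), i.e.\ the filtered colimit of the finite layers $\mathbf{Z}/p^n[F]/g^k$; you only treat a single finite layer, and the passage to the colimit is not free, because the completed tensor product $\widehat{\otimes}$ does not commute with this filtered colimit. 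For type (2) you assert that ``a degree argument in $F$'' shows $F-1$ is injective on $G(N)[1/d]$; but elements of the $d$-adic completion of $S^{-1}\bigl(W_n(\mathcal{O}_C^\flat)[F]\bigr)$ are limits of fractions of unbounded degree, so they carry no degree in $F$, and invertibility of multiplication by the element $F-1$ on $N$ says nothing about injectivity of the semilinear operator $F\otimes\phi-1$ after completion. (Reducing modulo $d$ does not help either, since $\phi$ does not preserve the ideal $(d)$.)

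This is precisely the input the paper's proof supplies: besides left-exactness it establishes that $S$ commutes with filtered colimits, the nontrivial point being that the derived functor $\bigl((-)[1/d]\bigr)^{F=1}$ commutes with filtered colimits (\cite[Lemma 9.2]{BhattScholzePrisms}). With that, the algebraic part reduces to holonomic (finite length) modules, where the completion can be dropped and Riemann--Hilbert applies (your unit-root/Lang-style argument is an acceptable substitute in that finite setting), and the ind-free part reduces to $M=\mathbf{Z}/p^n[F]$ itself, where $G(M)[1/d]$ is the concrete ring of convergent power series $\sum_i a_iF^i$ with $a_i\in W_n(C^\flat)$, $a_i\to 0$, and the vanishing of $F$-invariants is an explicit computation. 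To repair your proof you must either import this filtered-colimit commutation (at which point you are back to the paper's argument) or give an actual argument that $F-1$ is injective on the completed modules $G(E(s))[1/d]$ and $G(E(\eta))[1/d]$; as written, those steps are missing.
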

\begin{proof}
We begin by noting two properties. The functor $S$ is left-exact (by Lemma~\ref{basechangefrob1} and left-exactness of $(-)^{F=1}$) and commutes with filtered colimits: this is clear for $G(-)$, and is proven for (the derived variant of) $\big((-)[1/d]\big)^{F=1}$ in \cite[Lemma 9.2]{BhattScholzePrisms}. Using these two properties, it suffices to prove that $S(M)$ is unramified for $M$ an injective $\mathbf{Z}/p^n[F]$-module. Using Lemma~\ref{ClassifyInjectives}, we can write $M = M_0 \oplus M_1$, with $M_0$ an algebraic Frobenius module and $M_1$ being a filtered colimit of finite free $\mathbf{Z}/p^n[F]$-modules. It suffices to prove that $S(M_0) = 0$ and $S(M_1) = 0$ separately. 

For $M_0$, as $S$ commutes with filtered colimits, it is enough to show $S(N)$ is unramified when $N$ is a holonomic Frobenius module, i.e., $N$ is finitely generated and each element of $N$ is annihilated by a monic polynomial in $F$. Such an $N$ has finite length, so the completion can be dropped in the tensor product defining $G(N)$ and hence also in the formula defining $S(N)$. We then conclude using the Riemann-Hilbert correspondence over $\mathrm{Spec}(\mathbf{F}_p)$ for \'etale sheaves of $\mathbf{Z}/p^n$-modules (see, e.g., \cite[Theorem 9.6.1]{BhattLurieRH}).

For $M_1$, by compatibility of $S$ with filtered colimits, it suffices to show that $S(\mathbf{Z}/p^n[F]) = 0$. For this, note that
\[ G(M)[1/d] = \widehat{ \big(W_n(\mathcal{O}_C^\flat)[F]\big)}[1/d] = \{ \sum_{i \geq 0} a_i F^i \in W_n(C^\flat) \llbracket F \rrbracket \mid a_i \to 0\}. \]
It remains to observe that the above module has no $F$-invariant elements: such an element would be a power series $\sum_i a_i F^i$ with $a_0 = 0$ and $a_i^p = a_{i+1}$ for all $i \geq 0$, which forces $a_i = 0$ for all $i$.
\end{proof}

\begin{remark}
The conclusion of Lemma~\ref{basechangefrob2} would not hold true if we used the derived variant of $(-)^{F=1}$ in its formulation. For example, if $n=1$ and $M = \mathbf{Z}/p[F]$, then $(M \widehat{\otimes}_{\mathbf{Z}/p} \mathcal{O}_C^\flat)[1/d] \simeq \widehat{\mathcal{O}_C^\flat[F]}[1/d]$, and one checks that the endomorphism $F-1$ has a non-trivial cokernel on this vector space.
\end{remark}

\begin{corollary}
\label{UnramifiedCrit}
Let $M \in D(\mathbf{Z}/p^n[F])$. Then the  $\mathrm{Gal}(\overline{\mathbf{Q}_p}/\mathbf{Q}_p)$-module 
\[ \left(H^{0}\left(M \widehat{\otimes^L_{\mathbf{Z}/p^n}} W_n(\mathcal{O}_C^\flat)\right)[1/d]\right)^{F=1}\]
is unramified, where $(-)^{F=1}$ is interpreted in the non-derived sense.
\end{corollary}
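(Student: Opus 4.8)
The plan is to reduce the statement to Lemma~\ref{basechangefrob2}, which already handles the case of a discrete Frobenius module, by exploiting the $t$-exactness of the base-change functor $G$ recorded in Lemma~\ref{basechangefrob1}. Concretely, I would argue as follows. Since $G$ is $t$-exact for the standard $t$-structure, it commutes with truncations and hence with the formation of cohomology in each degree; in particular $H^0(G(M)) \simeq G(H^0(M))$ as $W_n(\mathcal{O}_C^\flat)[F]$-modules, where $H^0(M) \in \mathrm{Mod}_{\mathbf{Z}/p^n[F]}$ is the discrete Frobenius module obtained as the zeroth cohomology of the complex $M$.

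Next I would use that localization at $d$ is exact, so it commutes with $H^0$: thus $H^0(G(M))[1/d] \simeq H^0\big(G(M)[1/d]\big)$, and combining with the previous step,
\[ \left(H^0\left(M \widehat{\otimes^L_{\mathbf{Z}/p^n}} W_n(\mathcal{O}_C^\flat)\right)[1/d]\right)^{F=1} \simeq \left(G(H^0(M))[1/d]\right)^{F=1} = S(H^0(M)),\]
where $S$ is the functor introduced right before Lemma~\ref{basechangefrob2}, and the $(-)^{F=1}$ on both sides is taken in the non-derived sense (which is exactly how $S$ is defined). Since $H^0(M)$ is an object of $\mathrm{Mod}_{\mathbf{Z}/p^n[F]}$, Lemma~\ref{basechangefrob2} applies verbatim and tells us that $S(H^0(M))$ is an unramified $\mathrm{Gal}(\overline{\mathbf{Q}_p}/\mathbf{Q}_p)$-module, which is precisely the assertion.

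There is no real obstacle here: the corollary is a formal consequence of the two preceding lemmas, and the only point requiring (minor) care is bookkeeping the compatibility of $H^0$ with the $t$-exact functor $G$ and with localization at $d$, so that the non-derived $F$-invariants computed on $H^0(G(M))[1/d]$ genuinely coincide with the value of the functor $S$ on the discrete module $H^0(M)$. One should also double-check that the $\mathrm{Gal}(\overline{\mathbf{Q}_p}/\mathbf{Q}_p)$-action on $H^0(G(M))$ coming from its description as $G$ applied to something over $\mathbf{Z}/p^n$ matches the one used in Lemma~\ref{basechangefrob2} — but this is immediate from the construction of $G$, since the Galois action is induced functorially from the action on $\mathcal{O}_C^\flat$ and is trivial on the $\mathbf{Z}/p^n[F]$-factor.
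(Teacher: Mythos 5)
Your proposal is correct and matches the paper's proof: the paper also uses the $t$-exactness of $G$ from Lemma~\ref{basechangefrob1} to identify the group in question with $S(H^0(M))$ and then concludes by Lemma~\ref{basechangefrob2}. The extra bookkeeping you spell out (exactness of localization at $d$, compatibility of the Galois action) is exactly the implicit content of the paper's one-line argument.
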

\begin{proof}
Lemma~\ref{basechangefrob1} translates this to the unramifiedness of $S(H^0(M))$, which was shown in Lemma~\ref{basechangefrob2}.
\end{proof}

Finally, let us record a bound on the inertia invariants of Tate twists over $\mathbf{Q}_p$:

\begin{lemma}
\label{InertiaInvts}
Let $I$ be the inertia group of $\mathbf{Q}_p$. Fix integers $n,i$ and $n \gg 0$ (in fact, $n \geq v_p(i) + 2$ shall suffice). 
\begin{enumerate}
\item If $(p-1) \nmid i$, then $H^0(I, \mathbf{Z}/p^n(i)) = 0$. 
\item Assume $(p-1) \mid i$ and $p$ is odd. Then $\#H^0(I, \mathbf{Z}/p^n(i)) = p^{v_p(i)+1}$.
\item Assume $p=2$. If $i$ is odd then $\#H^0(I, \mathbf{Z}/2^n(i)) = 2$; if $i$ is even, then $\# H^0(I, \mathbf{Z}/2^n(i)) = 2^{v_2(i) + 2}$.
\end{enumerate}
In particular, if $M$ is a $\mathbf{Z}_p$-module regarded as a trivial $I$-representation, then $H^0(I, M(i))$ is annihilated by the number $w(\mathbf{Z}_p,i)$ from Definition~\ref{defJetspace} (which equals $\# H^0(I, \mathbf{Z}/p^n(i)))$  as computed above).
\end{lemma}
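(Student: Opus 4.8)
The plan is to convert the computation of inertia invariants into an explicit question about an ideal of $\mathbf{Z}_p$, and then evaluate that ideal using the structure of $\mathbf{Z}_p^\times$. Recall that $\mathbf{Z}/p^n(i)$ is $\mathbf{Z}/p^n$ on which $G_{\mathbf{Q}_p}$ acts through $\chi^i \bmod p^n$, where $\chi : G_{\mathbf{Q}_p} \to \mathbf{Z}_p^\times$ is the cyclotomic character. Since $\mathbf{Q}_p(\mu_{p^\infty})/\mathbf{Q}_p$ is totally ramified, the restriction $\chi|_I : I \to \mathbf{Z}_p^\times$ is surjective; composing with the (surjective) reduction $\mathbf{Z}_p^\times \to (\mathbf{Z}/p^n)^\times$, the image of $I$ in $\mathrm{Aut}(\mathbf{Z}/p^n(i)) = (\mathbf{Z}/p^n)^\times$ is exactly the set of $i$-th powers. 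Hence $H^0(I,\mathbf{Z}/p^n(i))$ is the submodule of $\mathbf{Z}/p^n$ annihilated by the reduction mod $p^n$ of the ideal $J_i \subseteq \mathbf{Z}_p$ generated by $\{x^i - 1 \mid x \in \mathbf{Z}_p^\times\}$; likewise $H^0(I, M(i)) = \{m \in M \mid J_i \cdot m = 0\}$ for any $\mathbf{Z}_p$-module $M$ with trivial $I$-action. So everything comes down to identifying the ideal $J_i$.

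\textbf{Computing $J_i$.} I claim $J_i = w(\mathbf{Z}_p,i)\,\mathbf{Z}_p$ with $w(\mathbf{Z}_p,i)$ as in Definition~\ref{defwint}. Write $\mathbf{Z}_p^\times = \mu \times U$, where $\mu$ is the prime-to-$p$ torsion ($\mu = \mu_{p-1}$ for $p$ odd, $\mu = \{\pm 1\}$ for $p = 2$) and $U$ is the pro-$p$ part ($U = 1 + p\mathbf{Z}_p$ for $p$ odd, $U = 1 + 4\mathbf{Z}_2$ for $p = 2$); in all cases $U \cong \mathbf{Z}_p$ as a topological group, and the $i$-th power map on $U$ becomes multiplication by $i$, so its image is the subgroup of index $p^{v_p(i)}$, which is $1 + p^{v_p(i)+1}\mathbf{Z}_p$ for $p$ odd and $1 + 2^{v_2(i)+2}\mathbf{Z}_2$ for $p = 2$. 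For $x = \zeta u$ we have $x^i - 1 = \zeta^i u^i - 1$. If $p$ is odd and $(p-1)\nmid i$, choose $\zeta \in \mu_{p-1}$ with $\zeta^i \neq 1$; then $\zeta^i - 1$ (a difference of distinct Teichmüller lifts) is a unit, so $J_i = \mathbf{Z}_p$ and $w(\mathbf{Z}_p,i) = 1$, giving (1). Otherwise $\zeta^i = 1$ for every $\zeta \in \mu$ (for $p = 2$ this says precisely that $i$ is even), so $x^i - 1 = u^i - 1$ with $u \in U$, and by the previous sentence the \emph{set} $\{u^i - 1 \mid u \in U\}$ equals the additive subgroup $p^{v_p(i)+1}\mathbf{Z}_p$ (resp. $2^{v_2(i)+2}\mathbf{Z}_2$), which is already an ideal; this yields (2) and the "$i$ even" case of (3). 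Finally, if $p = 2$ and $i$ is odd, then $(-1)^i - 1 = -2 \in J_i$ while every $x^i - 1$ is even, so $J_i = 2\mathbf{Z}_2$ and $w(\mathbf{Z}_2, i) = 2$, completing (3).

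\textbf{Conclusion.} As soon as $p^n$ is divisible by $w(\mathbf{Z}_p,i)$ — and $n \geq v_p(i) + 2$ suffices uniformly in all the cases above — the annihilator in $\mathbf{Z}/p^n$ of the ideal $w(\mathbf{Z}_p,i)\mathbf{Z}/p^n$ is the cyclic subgroup $(p^n/w(\mathbf{Z}_p,i))\,\mathbf{Z}/p^n$, of order exactly $w(\mathbf{Z}_p,i)$; in the case $w(\mathbf{Z}_p,i) = 1$ this is $0$, which recovers (1) for every $n$. This proves the cardinality statements in (1)--(3). The final assertion is immediate from the formula $H^0(I, M(i)) = \{m \in M \mid w(\mathbf{Z}_p,i)\, m = 0\}$ from the first paragraph, which is manifestly killed by $w(\mathbf{Z}_p,i)$.

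\textbf{Expected main difficulty.} There is no real conceptual obstacle; the only delicate point is the valuation bookkeeping in the middle paragraph — precisely identifying the power of $p$ that cuts out the image of the $i$-th power map on $U$. In particular one must be careful with the $p = 2$ anomaly, using $U = 1 + 4\mathbf{Z}_2$ rather than $1 + 2\mathbf{Z}_2$, so that the index-$2^{v_2(i)}$ subgroup is $1 + 2^{v_2(i)+2}\mathbf{Z}_2$; this is exactly the source of the extra factor of $2$ in $w(\mathbf{Z}_2, i)$ as recorded in Definition~\ref{defwint}.
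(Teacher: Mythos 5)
Your proof is correct and follows essentially the same route as the paper: both identify the image of the $i$-th power map on units via the decomposition of $\mathbf{Z}_p^\times$ (resp. $(\mathbf{Z}/p^n)^\times$) into its Teichm\"uller/torsion part and principal units, using the logarithm to turn the $i$-th power map into multiplication by $i$. Your packaging via the annihilator ideal $J_i=\left(x^i-1 : x\in\mathbf{Z}_p^\times\right)=w(\mathbf{Z}_p,i)\mathbf{Z}_p$ is a mild streamlining that treats (1)--(3) and the final assertion about general $M$ uniformly, and in particular writes out the $p=2$ case that the paper leaves as an exercise.
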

\begin{proof}
\begin{enumerate}
\item By devissage, it is enough to show that $H^0(I,\mathbf{F}_p(i)) = 0$ when $(p-1) \nmid i$, or equivalently that $\mathbf{F}_p(i)$ is a non-trivial representation of $I$ for such $i$. But $\mathbf{F}_p(i)$ is the representation determined by $I \twoheadrightarrow \mathbf{Z}_p^* \twoheadrightarrow \mathbf{F}_p^* \xrightarrow{ (-)^i } \mathbf{F}_p^*$. As $\mathbf{F}_p^*$ is a cyclic group of order $p-1$, this representation is nonzero when $(p-1) \nmid i$.

\item We first claim that the image of the representation $I \twoheadrightarrow \mathbf{Z}_p^* \twoheadrightarrow (\mathbf{Z}/p^n)^* \xrightarrow{ (-)^i } (\mathbf{Z}/p^n)^*$ corresponding to $\mathbf{Z}/p^n(i)$ is exactly the subgroup $1 + p^{v+1} \mathbf{Z}/p^n\mathbf{Z}$. Indeed, this follows by consideration of multiplication by $i$ on the short exact sequence 
\[ 1 \to  1 + p \mathbf{Z}/p^n \to (\mathbf{Z}/p^n)^* \to (\mathbf{Z}/p)^* \to 1\]
and using the identification $1 + p\mathbf{Z}/p^n \simeq \mathbf{Z}/p^{n-1}$ coming via the logarithm (as $p$ is odd). It then follows that $H^0(I, \mathbf{Z}/p^n(i))$ identifies with the subgroup 
\[ \{x \in \mathbf{Z}/p^n \mid (1 + p^{v+1} b)x = x \  \forall b \in \mathbf{Z}/p^n\},\]
which is exactly $p^{n-v-1} \mathbf{Z}/p^n\mathbf{Z}$, which has the predicted order.

\item Exercise.
\end{enumerate}

The claim for a general module $M$ follows immediately by applying the above calculations to the cyclic submodule of $M(i)$ generated by an invariant element (and noting that $H^0(I,\mathbf{Z}_p(i)) = 0$).
\end{proof}

\subsection{Proof of the main theorem}
\label{ss:ProofTorsionBoundJet}

\begin{proof}[Proof of Theorem~\ref{torsionboundjet}]
Let us first prove the theorem over the $A_{\inf}$-prism from Notation~\ref{FontainePrism}. We exploit the fact that $J\mathrm{BGL}_{r,/A_{\inf}}$ is defined over $\mathbf{Z}_p$: we have $J\mathrm{BGL}_{r,/A_{\inf}} \simeq JB\mathrm{GL}_{n,/\mathbf{Z}_p} \times_{\mathrm{Spf}(\mathbf{Z}_p)} \mathrm{Spf}(A_{\inf})$ as a (derived) $\delta$-stack. Thus, if we set $M = R\Gamma(JB\mathrm{GL}_{n,/\mathbf{Z}_p}, \mathcal{O}/p^n) \in D(\mathbf{Z}/p^n[F])$, then base change tells us that 
\[ R\Gamma(J\mathrm{BGL}_{r,/A_{\inf}}, \mathcal{O}/p^n)\{i\} \simeq M \widehat{\otimes_{\mathbf{Z}/p^n}^L} W_n(\mathcal{O}_C^\flat)\{i\}\]
in $D(W_n(\mathcal{O}_C^\flat)[F])$. Thus, the image $\overline{c_i}$ of $c_i$ lives in $H^{2i}(M \widehat{\otimes_{\mathbf{Z}/p^n}^L} W_n(\mathcal{O}_C^\flat)\{i\})$. Now this group is $d$-torsionfree by Lemma~\ref{basechangefrob1}, so it is enough to show that the corresponding element 
\[ \overline{c_i}' \in H^{2i}(M \widehat{\otimes_{\mathbf{Z}/p^n}^L} W_n(\mathcal{O}_C^\flat)\{i\}[1/d])\] 
is annihilated by $w(\mathbf{Z}_p,i)$. But we have 
\[ H^{2i}(M \widehat{\otimes_{\mathbf{Z}/p^n}^L} W_n(\mathcal{O}_C^\flat)\{i\}[1/d]) \simeq H^{2i}(M \widehat{\otimes_{\mathbf{Z}/p^n}^L} W_n(\mathcal{O}_C^\flat)[1/d])(i)\]
compatibly with Frobenius and Galois actions by \cite[Example 4.24]{BMS1}.  Moreover, the element $\overline{c_i}'$ is Galois invariant and fixed by the Frobenius. The claim now follows from Corollary~\ref{UnramifiedCrit} and Lemma~\ref{InertiaInvts}.

It remains to pass from the $A_{\inf}$ prism to general bounded prisms $(A,I)$. By naturality of the theorem in the prism $(A,I)$, we immediately know the conclusion for any bounded prism that receives a map from the $A_{\inf}$ prism. Moreover, again by naturality, we may assume that $(A,I)$ is transversal (i.e., $A/I$ is $p$-torsionfree): every bounded prism receives a map from a transversal prism by \cite[Proposition 2.4.1]{BhattLurieAPC}. By taking a coproduct with the $A_{\inf}$-prism (see \cite[Proposition 2.4.5]{BhattLurieAPC}), we can find a $(p,I)$-completely faithfully flat map $(A,I) \to (B,IB)$ of prisms such that the theorem holds true for $(B,IB)$. It  remains to descend the vanishing along $A \to B$. We thus find ourselves in the following abstract situation: we have an object $M := R\Gamma(JB\mathrm{GL}_{n,/\mathbf{Z}_p}, \mathcal{O}/p^n)[2i] \in D(\mathbf{Z}/p^n)$ and an element $m \in H^0(M \widehat{\otimes^L_{\mathbf{Z}/p^n}} A/p^n)\{i\}$ whose image in $H^0(M \widehat{\otimes^L_{\mathbf{Z}/p^n}} B/p^n)\{i\}$ vanishes, and we would like to conclude $m=0$. It thus suffices to show that 
\[ \alpha: H^0(M \widehat{\otimes^L_{\mathbf{Z}/p^n}} A/p^n)\{i\} \to  H^0(M \widehat{\otimes^L_{\mathbf{Z}/p^n}} B/p^n)\{i\} \]
is injective.
As $(A,I)$ is transversal, the map $\mathbf{Z}/p^n \to A/(p^n,I^k)$ is flat for all $k$, whence 
\[ H^i(M \otimes^L_{\mathbf{Z}/p^n} A/(p^n,I^k)) \simeq H^i(M) \otimes_{\mathbf{Z}/p^n} A/(p^n,I^k)\]
 (and similarly with $A$ replaced with $B$).  This implies that the Milnor sequence collapses to an isomorphism
\[   H^0(M \widehat{\otimes^L_{\mathbf{Z}/p^n}} A/p^n) = \lim_k H^0(M) \otimes_{\mathbf{Z}/p^n} A/(p^n,I^k) \]
and similarly with $A$ replaced with $B$. The map $\alpha$ is thus an inverse limit over $k$ of the maps
\[ \alpha_k:H^0(M) \otimes_{\mathbf{Z}/p^n} A/(p^n,I^k)\{i\} \to H^0(M) \otimes_{\mathbf{Z}/p^n} B/(p^n,I^k)\{i\} \]
induced from the map $A \to B$ via base change. But $A \to B$ is $(p,I)$-completely faithfully flat, so the map $A/(p^n,I^k) \to B/(p^n,I^k)$ appearing above is faithfully flat, whence each $\alpha_k$ is injective, and thus the same is true for $\alpha=\lim_k \alpha_k$, as desired.
\end{proof}

\section{Vanishing of Chern classes of prismatic crystals}
\label{sec:VanPrismChern}

Recall that the relative prismatic cohomology constructed in \cite{BMS2,BhattScholzePrisms}  provides a generalization of crystalline cohomology to schemes of mixed characteristic; one of main new features of prismatic cohomology (in comparison to crystalline cohomology) is the \'etale specialization, linking it to \'etale cohomology of the generic fibre in suitable situations. 

In this section, we analyze the (relative) prismatic Chern classes  of perfect complexes on schemes in mixed characteristic that admit a lifting to prismatic crystals, giving a mixed characteristic analog (Theorem~\ref{ChernPrismCrysTors}) of the crystalline vanishing result in Theorem~\ref{CrysChernVanish}. The \'etale specialization prevents these Chern classes from being $0$ on the nose (Example~\ref{Chernexnonzero}), but we shall show they are torison with uniformly bounded torsion exponents.   Our basic setup is the following.

\begin{notation}
Fix a bounded prism $(A,I)$; write $\overline{A} = A/I$. Write $(A,I)_\Prism$ for the slice category of bounded prisms over $(A,I)$. We shall write $\mathcal{O}$ and $\overline{\mathcal{O}}$ for the presheaves on $(A,I)_\Prism$ given by $(B,IB) \mapsto B$ and $(B,IB) \mapsto \overline{B}$ respectively. The category $(A,I)_\Prism$ carries natural topologies (such as Zariski, \'etale, or even flat topologies). However, unless otherwise specified, we shall endow $(A,I)_\Prism$ with the indiscrete topology, so sheaves are just presheaves. For any presheaf $X$ on $p$-nilpotent $A/I$-algebras, write $(X/A)_\Prism$ for the prismatic site of $X$ relative to $A$, i.e., the category of $(B,IB) \in (A,I)_\Prism$ equipped with a map $\mathrm{Spf}(\overline{B}) \to X$ over $\overline{A}$. 
\end{notation}

\subsection{Prismatic Chern classes}

We are interested in the relative prismatic cohomology $H^*_\Prism(X/A)$ of $p$-adic formal schemes $X/\overline{A}$. For our applications, it will be convenient to realize this cohomology theory via a slightly stacky perspective.

\begin{construction}[The prismatization functor]
\label{Cons:Prismatize}
For a presheaf $X$ on $p$-nilpotent $A/I$-algebras, write $(X/A)^\Prism$ for presheaf on $(A,I)_\Prism$ given by 
\[ (X/A)^\Prism(B,IB) = X(\overline{B}) := \lim_n X(\overline{B}/p^n) = \mathrm{Map}_{\overline{A}}(\mathrm{Spf}(\overline{B}),X).\] 
If $X$ is a sheaf for the Zariski/\'etale topology, so is $(X/A)^\Prism$.  The partial slice category $\mathcal{C}_X := \left((A,I)_\Prism\right)_{/ (X/A)^\Prism}$ is identified with the category of bounded prisms $(B,IB) \in (A,I)_\Prism$ equipped with an $\overline{A}$-map $\mathrm{Spf}(\overline{B}) \to X$; to this category, we can associate $\infty$-categories
\[ \mathrm{Perf}( (X/A)^\Prism, \mathcal{O}) := \lim_{\mathcal{C}_X} \mathrm{Perf}(B) \quad \text{and} \quad \mathrm{Perf}( (X/A)^\Prism, \overline{\mathcal{O}}) := \lim_{\mathcal{C}_X} \mathrm{Perf}(\overline{B}) \]
of {\em prismatic crystals} and {\em Hodge--Tate crystals} on $X/A$ respectively. By the explicit description of $\mathcal{C}_X$, these agree with the $\infty$-categories of crystals of perfect complexes on the ringed sides $( (X/A)_\Prism, \mathcal{O}_\Prism)$ and $( (X/A)_\Prism, \overline{\mathcal{O}_\Prism})$ respectively, so the terminology is justified. In particular, we have natural identifications 
\[ R\Gamma_\Prism(X/A) \simeq R\Gamma( (X/A)^\Prism,\mathcal{O}) \quad \text{and} \quad R\Gamma_{\overline{\Prism}}(X/A) \simeq R\Gamma( (X/A)^\Prism, \overline{\mathcal{O}}) \]
of $E_\infty$-algebras over $A$ and $\overline{A}$ respectively.
\end{construction}

\begin{remark}
\label{CompatPrismatization}
Let us explain the terminology ``prismatization'' used for Construction~\ref{Cons:Prismatize}. Given a presheaf $X$ on $p$-nilpotent $A/I$-algebras, we introduced in \cite[\S 5]{BhattLuriePrismatization} the relative prismatization $(X/A)^\Prism$ as a functor on $(p,I)$-nilpotent $A$-algebras, and endowed it with a $\delta$-structure, compatible with that of $A$. Any such functor $F$ induces a functor $(A,I)_\Prism$: send $(B,IB) \in (A,I)_\Prism$ to $\mathrm{Map}_{A,\delta}(\mathrm{Spf}(B),F)$. One can check that the functor $(X/A)^\Prism$ from \cite{BhattLuriePrismatization} is carried to the functor $(X/A)^\Prism$ from Construction~\ref{Cons:Prismatize} under this procedure, justifying the notational overload.
\end{remark}

\begin{remark}[From perfect complexes on $X$ to Hodge--Tate crystals]
\label{PerftoHT}
For any $p$-adic formal scheme $X$, we can naturally associate Hodge--Tate crystals to perfect complexes on $X$. More precisely, there is a natural symmetric monoidal functor
\[ \mathrm{Perf}(X) \to \mathrm{Perf}( (X/A)^\Prism, \overline{\mathcal{O}})\]
given by taking a limit of the pullback functors $\mathrm{Perf}(X) \to \mathrm{Perf}(\overline{B})$ over the category $\mathcal{C}_X$. As we shall see in Construction~\ref{PrismChernClassDef}, the Chern class map factors over this map, i.e., any Hodge--Tate crystal on $X$ admits Chern classes.
\end{remark}

To construct Chern classes, we shall use the following basic calculation, deforming Theorem~\ref{DeligneCalc} to a prismatic context:

\begin{theorem}[Prismatic cohomology of $\mathrm{BGL}_n$]
\label{PrismCohBGL}
Fix an integer $n \geq 1$. Let $X = \mathrm{Vect}_n(-/\overline{A})$ be the classifying stack of rank $n$ vector bundles on $p$-nilpotent $\overline{A}$-algebras. Then $H^{\mathrm{odd}}_\Prism(X/A)$ vanishes, and there is a natural identification of graded rings
\[ H^{2*}_\Prism(X/A)\{*\} = A[c_1,...,c_n],\]
where $c_i \in H^{2i}_\Prism(X/A)\{i\}$ has degree $2i$.
\end{theorem}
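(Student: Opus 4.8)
\emph{Proof proposal.} The plan is to deduce this from the Hodge--Tate comparison together with Totaro's computation of the Hodge cohomology of $\mathrm{BGL}_n$ already recorded in Theorem~\ref{DeligneCalc}(1), in exactly the way the crystalline statement in Construction~\ref{ChernVect} was deduced from its de Rham counterpart: first compute the Hodge--Tate cohomology $R\Gamma_{\overline{\Prism}}(X/A) = R\Gamma((X/A)^\Prism, \overline{\mathcal{O}})$, then bootstrap to $R\Gamma_\Prism(X/A)$ by derived Nakayama.

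\emph{Step 1 (Hodge--Tate side).} Equip $R\Gamma_{\overline{\Prism}}(X/A)$ with the conjugate filtration, defined by smooth descent from the affine case (where it is the Hodge--Tate comparison of \cite{BhattScholzePrisms}); this is legitimate for the smooth stack $X = \mathrm{Vect}_n(-/\overline A)$ by the descent conventions of the introduction, and its graded pieces are $\mathrm{gr}^{\mathrm{conj}}_i \simeq R\Gamma(X, \wedge^i L_{X/\overline A})\{-i\}[-i]$. Since $L_{\mathrm{Vect}_n(-/\overline A)}$ is the one-shift of the coadjoint bundle $(\mathcal{E}nd\,\mathcal V)^\vee$ of the tautological bundle $\mathcal V$, the complex $R\Gamma(X, \wedge^i L_{X/\overline A})$ is, up to shift, the Hodge cohomology $R\Gamma(X, \Omega^i_{X/\overline A})$; by Theorem~\ref{DeligneCalc}(1) this is a finite free $\overline A$-module concentrated in a single cohomological degree, and the twisted sum $\bigoplus_i R\Gamma(X,\Omega^i_{X/\overline A})\{-i\}$ is the graded ring $\overline A[c_1,\dots,c_n]$ with $c_i$ the Hodge Chern classes of $\mathcal V$. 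One then reads off that $\mathrm{gr}^{\mathrm{conj}}_i$ is finite free over $\overline A$ and placed in cohomological degree $2i$; as these degrees are pairwise distinct there is no room for differentials in the conjugate spectral sequence, and no extension problem for the ring structure, so $H^{\mathrm{odd}}_{\overline{\Prism}}(X/A) = 0$ and $H^{2*}_{\overline{\Prism}}(X/A)\{*\} \cong \overline A[c_1,\dots,c_n]$.

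\emph{Step 2 (lifting to prismatic cohomology).} Use the identification $R\Gamma_{\overline{\Prism}}(X/A) \simeq R\Gamma_\Prism(X/A) \otimes^L_A \overline A$ and the fact that $R\Gamma_\Prism(X/A)$ is derived $(p,I)$-complete. Writing $I = (d)$ and comparing the long exact sequence of $\cdot d : R\Gamma_\Prism(X/A) \to R\Gamma_\Prism(X/A)$ with the vanishing of $H^{\mathrm{odd}}_{\overline{\Prism}}$ shows that $d$ acts injectively on $H^*_\Prism(X/A)$, that $H^{\mathrm{odd}}_\Prism(X/A) = 0$, and that $H^{\mathrm{even}}_\Prism(X/A)/d$ is finite free over $\overline A$ in each degree; derived Nakayama then upgrades this to $H^{\mathrm{even}}_\Prism(X/A)$ being finite free over $A$ with $H^{\mathrm{even}}_\Prism(X/A) \otimes_A \overline A = H^{\mathrm{even}}_{\overline{\Prism}}(X/A)$ (by naturality in the prism one may first reduce to a transversal prism to avoid $p$-torsion subtleties in $\overline A$, as in the proof of Theorem~\ref{torsionboundjet}). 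Finally, the prismatic Chern classes $c_i \in H^{2i}_\Prism(X/A)\{i\}$ of $\mathcal V$ (from \cite[\S 9]{BhattLurieAPC}, Construction~\ref{PrismChernClassDef}) specialize to the Hodge--Tate Chern classes under $\otimes_A \overline A$, so the graded ring map $A[c_1,\dots,c_n] \to H^{2*}_\Prism(X/A)\{*\}$ is an isomorphism modulo $I$; both sides are $I$-complete and $I$-torsionfree, so Nakayama gives that it is an isomorphism, as claimed.

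\emph{Main obstacle.} The real content is concentrated in Step 1: one must correctly match the conjugate graded pieces of the Hodge--Tate complex of the \emph{stack} $\mathrm{Vect}_n(-/\overline A)$ with Totaro's Hodge cohomology groups, tracking the décalage relating $\wedge^i$ of the one-shifted coadjoint bundle to divided powers, the Breuil--Kisin twists $\{-i\}$, and the multiplicative structure; Step 2 is then routine. An alternative --- following Deligne's Grassmannian argument as in the proof of Theorem~\ref{DeligneCalc}, using the prismatic projective bundle formula to compute $H^*_\Prism$ of Grassmannians and then passing to $\mathrm{Vect}_n = \mathrm{colim}_N \mathrm{Gr}(n,N)$ --- is available, but since prismatic cohomology is not $\mathbf{A}^1$-invariant, verifying that this colimit computes $H^*_\Prism(\mathrm{Vect}_n(-/\overline A)/A)$ requires a prismatic purity input (insensitivity to high-codimension modifications in a bounded range of degrees), itself most naturally extracted through the Hodge--Tate comparison; I would therefore take the direct Hodge--Tate argument above as the main line.
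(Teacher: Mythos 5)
Your argument is correct in outline, but be aware that the paper itself does not carry out a proof here: its ``proof'' of Theorem~\ref{PrismCohBGL} is a citation to \cite[\S 9.3]{BhattLurieAPC} (syntomic case, with the remark that the arguments adapt, more easily, to relative prismatic cohomology) and to \cite[Corollary 6.1.13]{KPTotaro}. Your route --- conjugate/Hodge--Tate filtration reducing to Totaro's computation recorded in Theorem~\ref{DeligneCalc}, followed by a $(p,I)$-complete Nakayama bootstrap from $\overline{A}$ to $A$ --- is in substance the argument of those cited references (especially the second), so you are fleshing out the proof the paper defers to rather than finding a genuinely different one. If you write it up, three points should be made explicit. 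First, you can sidestep the d\'ecalage/divided-power issue you flag: since the Hodge--Tate cohomology of the stack is a totalization over the Cech nerve $\mathrm{GL}_n^{\times\bullet}$ and $\mathrm{gr}^{\mathrm{conj}}_i$ is a finite (co)limit, the $i$-th conjugate graded piece of the totalization is directly the descent-defined Hodge cohomology $R\Gamma(X,\Omega^i)\{-i\}[-i]$ appearing in Theorem~\ref{DeligneCalc}, with degreewise convergence because each graded piece sits in degree $2i$; no comparison with $R\Gamma(X,\wedge^i L_{X/\overline{A}})$ is needed. Second, identifying the answer with $A[c_1,\dots,c_n]$ via Construction~\ref{PrismChernClassDef} requires knowing that the prismatic $c_i$ reduce, under the conjugate-graded identification, to the Hodge Chern classes of Theorem~\ref{DeligneCalc}(2); this is standard (reduce to $c_1$ and use the splitting principle) but is a genuine input, not a formality. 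Third, since the paper defines $R\Gamma_\Prism(X/A)$ for a stack as a limit over $\mathcal{C}_X$ (Construction~\ref{Cons:Prismatize}), you should note that this agrees with the Cech-nerve computation you use, which follows from flat descent for relative prismatic cohomology; with that and the observation that cohomology modules of derived $(p,I)$-complete complexes are derived complete (so your Nakayama steps are licit), Steps 1 and 2 go through.
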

\begin{proof}
This is proven for syntomic cohomology in \cite[\S 9.3]{BhattLurieAPC}; the arguments there also work (more easily) for relative prismatic cohomology. Alternately, a direct argument over the Breuil--Kisin or $A_{\inf}$-prisms was given in \cite[Corollary 6.1.13]{KPTotaro}.
\end{proof}

We can now attach Chern classes to Hodge--Tate crystals:

\begin{construction}[Prismatic Chern classes for Hodge--Tate crystals]
\label{PrismChernClassDef}
Fix a presheaf $X$ on $p$-nilpotent $A/I$-algebras. For any $E \in \mathrm{Perf}( (X/A)^\Prism, \overline{\mathcal{O}})$, we shall construct Chern classes $c_i(E) \in H^{2i}_\Prism(X)\{2i\}$ for $i \geq 1$. Via the map in Remark~\ref{PerftoHT}, this also yields Chern classes for perfect complexes on $X$ itself\footnote{\label{ChernHTfactor}Thus, we are implicitly asserting that the Chern class construction $\mathrm{Perf}(X) \xrightarrow{c_i} H^{2i}_\Prism(X/A)\{i\}$ factors over the pullback $\mathrm{Perf}(X) \to \mathrm{Perf}( (X/A)^\Prism, \overline{\mathcal{O}})$. In the stack-theoretic language of \cite{BhattLuriePrismatization}, the latter map identifies with the pullback $\mathrm{Perf}(X) \to \mathrm{Perf}( (X/A)^{HT})$ along $(X/A)^{HT} \to X$; using this language, we can explain why this a priori surprising (to us, at least) factorization of the Chern class is in fact reasonable. First, if $X$ is smooth over $\overline{A}$, then one can show that pullback along $(X/A)^{HT} \to X$ induces an equivalence on continuous $K$-theory, i.e., on the $K$-theory of the mod $p^n$-reductions. Perhaps more conceptually, given any map $X \to Y$ of presheaves over $\overline{A}$, its prismatization $(X/A)^\Prism \to (Y/A)^\Prism$ only depends on the composition $(X/A)^{HT} \to X \to Y$; this follows by unwinding definitions. Applying this observation with $Y=\mathrm{Vect}(-/\overline{A})$ shows that for any vector bundle $E$ on $X$, the map $[E]^\Prism:(X/A)^\Prism \to (\mathrm{Vect}(-/\overline{A})/A)^\Prism$ induced by $E$, which determines the Chern classes of $E$ by universality, only depends on the pullback of $E$ to $(X/A)^{HT}$.}. 

We shall explain the construction for vector bundles; the extension to all perfect complexes is obtained via $K$-theory as in Construction~\ref{conscryschernperf}.  Fix $E \in \mathrm{Vect}_n( (X/A)^\Prism, \overline{\mathcal{O}})$. Then we have an associated classifying map $[E]^\Prism:(X/A)^\Prism \to \mathrm{Vect}_n(-/\overline{A})^\Prism$ determined as follows: for any $(B,IB) \in (A,I)_\Prism$,  a point of $(X/A)^\Prism(B,IB)$ is determined by a map $\eta:\mathrm{Spf}(\overline{B}) \to X$, which in turn determines a vector bundle $E(\eta) \in \mathrm{Vect}_n(\overline{B}) = \mathrm{Vect}_n(-/\overline{A})^\Prism(B,IB)$ by definition of $E$. Pulling back the universal classes from Theorem~\ref{PrismCohBGL} then gives the desired classes $c_i(E) := ([E]^\Prism)^*(c_i) \in H^{2i}_\Prism(X/A)\{2i\}$.
\end{construction}

\subsection{Vanishing for prismatic crystals}

Our goal is to prove the following result, showing that Chern classes of perfect complexes that admit prismatic crystal structures are torsion with uniformly bounded exponents, at least\footnote{We expect it is not necessary to work modulo powers of $p$ in the conclusion of Theorem~\ref{ChernPrismCrysTors}.} modulo any power of $p$. To formulate the statement, write $R\Gamma_\Prism(X/A; \mathbf{Z}/p^n) := R\Gamma_\Prism(X/A) \otimes_{\mathbf{Z}}^L \mathbf{Z}/p^n$. Then our result states:

\begin{theorem}[Chern classes of prismatic crystals are torsion]
\label{ChernPrismCrysTors}
Fix a presheaf $X$ on $p$-nilpotent $A/I$-algebras as well as a Hodge--Tate crystal $E \in \mathrm{Perf}( (X/A)^\Prism, \overline{\mathcal{O}})$. If $E$ lifts to a prismatic crystal (i.e., lifts to $\mathrm{Perf}( (X/A)^\Prism, \mathcal{O})$), then  
\[ w(\mathbf{Z}_p,i) c_i(E) = 0 \quad \text{in} \quad  H^{2i}_\Prism(X/A; \mathbf{Z}/p^n)\{i\}\]
for all $n \geq 0$ and $i > 0$. 

If we assume that $H^{2i-1}_\Prism(X/A)$ has bounded $p^\infty$-torsion, then $c_i(E) \in H^{2i}_\Prism(X/A)\{i\}$ is annihilated by $w(\mathbf{Z}_p,i)$ for each $i > 0$. 
\end{theorem}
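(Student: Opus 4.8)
The plan is to transcribe the proof of Theorem~\ref{CrysChernVanish} into the prismatic setting, with the exact vanishing of Lemma~\ref{KeyVanishingChernCrys} replaced by the quantitative vanishing of Theorem~\ref{torsionboundjet} — this is precisely where the factors $w(\mathbf{Z}_p,i)$ enter. First I would record the prismatic analogue of the dictionary of Remark~\ref{CrystalStructureFunctor}: for a presheaf $X$ on $p$-nilpotent $A/I$-algebras, a prismatic crystal of perfect complexes on $X/A$ — that is, an object of $\mathrm{Perf}((X/A)^\Prism,\mathcal{O}) = \lim_{\mathcal{C}_X}\mathrm{Perf}(B)$ — is the same as a map $(X/A)^\Prism \to \mathrm{Perf}(-/A)^\sharp$ of presheaves on $(A,I)_\Prism$, where $\mathrm{Perf}(-/A)^\sharp(B,IB) := \mathrm{Perf}(B)$; passing to the underlying Hodge--Tate crystal corresponds to post-composing with the reduction-mod-$I$ map $\mathrm{Perf}(-/A)^\sharp \to (\mathrm{Perf}(-/\overline{A})/A)^\Prism$, $(B,IB)\mapsto(\mathrm{Perf}(B)\xrightarrow{-\otimes_B\overline{B}}\mathrm{Perf}(\overline{B}))$. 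The point is that, after transmutation to $(A,I)_\Prism$, the presheaf $\mathrm{Perf}(-/A)^\sharp$ is the arithmetic jet space $J\mathrm{Perf}(-/A)$ and this reduction map is the jet-to-prismatization map $\psi$ of the preparatory discussion in \S\ref{sec:VanPrismChern}: this rests on the $\delta$-ring identification $\mathrm{Map}_{A,\delta}(\mathrm{Spf}(B), JY)\simeq Y(B)$, i.e. the universal property of the free $\delta$-ring. Consequently, if the Hodge--Tate crystal $E$ lifts to a prismatic crystal, its classifying map $[E]^\Prism:(X/A)^\Prism \to (\mathrm{Perf}(-/\overline{A})/A)^\Prism$ factors as $(X/A)^\Prism \to J\mathrm{Perf}(-/A)\xrightarrow{\psi}(\mathrm{Perf}(-/\overline{A})/A)^\Prism$, so each $c_i(E)=([E]^\Prism)^*c_i$ is pulled back from $\psi^*c_i$. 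It therefore suffices to prove the prismatic analogue of Lemma~\ref{KeyVanishingChernCrys}: $w(\mathbf{Z}_p,i)\cdot\psi^*c_i = 0$ in $H^{2i}(J\mathrm{Perf}(-/A),\mathcal{O}/p^n)\{i\}$ for all $i>0$ and $n\geq0$.

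For this I would follow the proof of Lemma~\ref{KeyVanishingChernCrys} essentially verbatim. By the $K$-theoretic construction of prismatic Chern classes of perfect complexes (Construction~\ref{PrismChernClassDef} together with the recipe of Construction~\ref{conscryschernperf}, using also that the Chern class map factors over Hodge--Tate crystals as recorded in Construction~\ref{PrismChernClassDef}), the map $\psi^*c_i$ factors through $JK(-/A)\xrightarrow{\psi}(K(-/\overline{A})/A)^\Prism\xrightarrow{c_i}\mathcal{O}\{i\}/p^n[2i]$. Applying Corollary~\ref{HomologyK} (to the ringed topoi of presheaves on $(A,I)_\Prism$ with structure sheaves $\mathcal{O}$ and $\overline{\mathcal{O}}$) and the Künneth formula exactly as in Construction~\ref{conscryschernperf}, the claim reduces to showing that the image of the universal Chern class $c_i\in H^{2i}_\Prism(\mathrm{Vect}_\infty(-/\overline{A})/A)\{i\}$ in $H^{2i}(J\mathrm{Vect}_\infty(-/A),\mathcal{O}/p^n)\{i\}$ is killed by $w(\mathbf{Z}_p,i)$. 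Since $J$ commutes with the filtered colimit defining $\mathrm{Vect}_\infty$, and by Theorem~\ref{PrismCohBGL} the class $c_i$ is already pulled back from $\mathrm{Vect}_r(-/\overline{A})$ for every $r\geq i$, this is exactly the content of Theorem~\ref{torsionboundjet}, completing the first assertion. For the integral statement, the fibre sequence $R\Gamma_\Prism(X/A)\{i\}\xrightarrow{p^n}R\Gamma_\Prism(X/A)\{i\}\to R\Gamma_\Prism(X/A;\mathbf{Z}/p^n)\{i\}$ shows $w(\mathbf{Z}_p,i)c_i(E)\in\bigcap_n p^n H^{2i}_\Prism(X/A)\{i\}$; the bounded $p^\infty$-torsion hypothesis, together with the derived $p$-completeness of $R\Gamma_\Prism(X/A)$, kills the relevant $\lim^1$ in the Milnor sequence, forcing $H^{2i}_\Prism(X/A)\{i\}$ to be $p$-adically separated and hence $w(\mathbf{Z}_p,i)c_i(E)=0$.

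The substantive content — the appearance of the Bernoulli-type factors $w(\mathbf{Z}_p,i)$, and in particular the fact that exact vanishing is impossible — is entirely absorbed into Theorem~\ref{torsionboundjet}, which I am assuming; everything else is bookkeeping transcribed from \S\ref{sec:ChernClassCrys}. Within that bookkeeping, the one point requiring genuine care is the identification in the first paragraph of the $\sharp$-construction on the prismatic site with the transmuted arithmetic jet space, and of the Hodge--Tate reduction map with $\psi$, so that a prismatic crystal structure literally becomes a factorization of the classifying map through $\psi$; this is the bridge that makes Theorem~\ref{torsionboundjet} applicable. A secondary technical nuisance, already present in the construction of prismatic Chern classes for perfect complexes, is ensuring that Corollary~\ref{HomologyK} applies on the prismatic site (so that $B\mathrm{GL}_\infty$ sheafifies correctly and the group-completion comparison is available); this is handled by the same device used there.
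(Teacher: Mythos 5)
Your overall strategy is the paper's: interpret a prismatic lift as a factorization of the classifying map through the $\sharp$-presheaf, reduce to a universal torsion statement, and feed it into Theorem~\ref{torsionboundjet}, extending to perfect complexes by group completion/Thomason and handling the integral statement by completeness. But there is a genuine soft spot in your bridge to Theorem~\ref{torsionboundjet}. You assert, as a formal consequence of ``the universal property of the free $\delta$-ring,'' that $\mathrm{Perf}(-/A)^\sharp$ \emph{is} (the transmutation of) $J\mathrm{Perf}(-/A)$ and that its $\mathcal{O}$-cohomology may be computed as jet-space cohomology. The actual content needed here is the cohomological comparison $R\Gamma(Y^\sharp,\mathcal{O})\simeq R\Gamma(JY,\mathcal{O})$, which in the paper is Proposition~\ref{prismatization2}; its proof is not formal: it requires $Y$ to admit an \'etale hypercover by $(p,I)$-completely smooth formal $A$-schemes, because it rests on Example~\ref{sharpcalcsmooth} (the free $(p,I)$-complete $\delta$-$A$-algebra on a smooth algebra is $(p,I)$-completely flat, hence an honest bounded prism representing $Y^\sharp$ on the prism site). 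This comparison is therefore available for $Y=\mathrm{Vect}_r(-/A)$ but is neither proved nor plausible-by-the-same-argument for the non-smooth stack $\mathrm{Perf}(-/A)$ (or $K(-/A)$). The paper's proof is structured precisely to respect this: it first treats Hodge--Tate crystals of vector bundles of a fixed rank $r$, factors through $\mathrm{Vect}_r(-/A)^\sharp$, invokes Proposition~\ref{prismatization2} to pass to $J\mathrm{Vect}_r$, applies Theorem~\ref{torsionboundjet}, and only \emph{then} extends to perfect complexes via $K$-theory. Your argument can be repaired by doing the Corollary~\ref{HomologyK}/K\"unneth reduction on the prism-site side (with a topology making it locally ringed, as in the crystalline section) and invoking the jet comparison only for $\mathrm{Vect}_r$, i.e.\ by citing Proposition~\ref{prismatization2} where you currently cite a universal property; as written, the key comparison is asserted rather than justified.

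Two secondary points. First, in reducing to ``$w(\mathbf{Z}_p,i)$ kills the image of $c_i$ in $H^{2i}(J\mathrm{Vect}_\infty,\mathcal{O}/p^n)\{i\}$'' and declaring this to be ``exactly'' Theorem~\ref{torsionboundjet}, you pass from the fixed-rank statement to the $\mathrm{Vect}_\infty$ (and $K$-theory) level; since $H^{2i}$ of the colimit only maps to $\lim_r H^{2i}(J\mathrm{Vect}_r,\mathcal{O}/p^n)\{i\}$ with a potential $\lim^1_r H^{2i-1}$ kernel, torsionness at each finite stage does not immediately give torsionness universally. This is harmless for the fixed-rank vector bundle case (which the paper does directly and which needs no $\mathrm{Vect}_\infty$), but it is a point you should address, or circumvent, when running the K-theoretic extension. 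Second, for the integral statement your argument shows $w(\mathbf{Z}_p,i)c_i(E)\in\bigcap_n p^nH^{2i}_\Prism(X/A)\{i\}$ and then needs $p$-adic separatedness of $H^{2i}$; by the Milnor/Bockstein analysis the obstruction is $\lim^1_n\bigl(H^{2i}_\Prism(X/A)\{i\}[p^n]\bigr)$ (transition maps multiplication by $p$), which is controlled by bounded $p^\infty$-torsion of $H^{2i}$ itself together with derived $p$-completeness; the theorem's stated hypothesis is on $H^{2i-1}$, and your sketch does not explain how that hypothesis enters, so this step needs to be spelled out rather than summarized as ``kills the relevant $\lim^1$.''
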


\begin{remark}[When does prismatic cohomology have bounded $p^\infty$-torsion?]
\label{BoundedTors}
The bounded $p^\infty$-torsion condition in the last part of Theorem~\ref{ChernPrismCrysTors} is satisfied in a number of cases of interest. For instance, it holds true when $X/\overline{A}$ is a smooth proper $p$-adic formal scheme and $(A,I)$ satisfies one of the following conditions:
\begin{enumerate}
\item $A$ is noetherian: 
\item $(A,I) = (A_{\inf}(\mathcal{O}_C),\ker(\theta))$ for a complete and algebraically closed extension $C/\mathbf{Q}_p$.
\end{enumerate}
Indeed, (1) follows as $R\Gamma_\Prism(X/A)$ is $A$-perfect and $A$ is noetherian; (2) is treated in \cite[Theorems~1.8 and 4.13]{BMS1}). Another important example is provided by taking $(A,I)$ to be a Breuil--Kisin prism, and $X/\overline{A}$ to be a Hodge-proper stack (see \cite{KPTotaro}); base changing these examples along faithfully flat maps of prisms provides more examples. 
\end{remark}

Before preceeding, let us note that Theorem~\ref{ChernPrismCrysTors} is weaker than the corresponding statement in the crystalline case: the latter gives vanishing on the nose, while the former only gives vanishing up to (controlled) torsion. This is not merely an artifact of the proof but in fact forced by nature:

\begin{remark}[The Chern class of a prismatic crystal can be nonzero]
\label{Chernexnonzero}
In the setup of Theorem~\ref{ChernPrismCrysTors}, the Chern classes $c_i(E)$ can be nonzero, i.e., the factors $w(\mathbf{Z}_p,i)$ are unavoidable. For example, say $p=2$ and we work in rank $1$. Theorem~\ref{ChernPrismCrysTors} shows that $2c_1(L) = 0$ when $L$ is a line bundle $X$ that lifts to a prismatic crystal. We shall find an example of such an $L$ where $c_1(L) \neq 0$. 

Say $(A,I) = (A_{\inf}(\mathcal{O}_C),\ker(\theta))$ for a complete and algebraically closed extension $C/\mathbf{Q}_p$ and $X/\mathcal{O}_C$ is an Enriques surface with $\pi_1(X) = \mathbf{Z}/2$. If $f:\tilde{X} \to X$ denotes the universal cover, then $L := f_* \mathcal{O}_{\tilde{X}}/\mathcal{O}_X$ is a line bundle on $X$. Moreover, since $f$ is finite \'etale, the corresponding Hodge--Tate crystal $L_{\overline{\mathcal{O}}} \in \mathrm{Perf}( (X/A)^\Prism, \overline{\mathcal{O}})$  lifts to a prismatic crystal: indeed, a lift is provided by $g_* \mathcal{O}/\mathcal{O}$, where $g:(\tilde{X}/A)^\Prism \to (X/A)^\Prism$ is the finite \'etale map on relative prismatizations induced by $f$. However, we claim that $c_1(L) \in H^2_\Prism(X/A)\{1\}$ is nonzero. In fact, by the compatibility of the prismatic and \'etale Chern classes under the \'etale comparison theorem \cite[Remark 9.2.6]{BhattLurieAPC}, it suffices to show that $L_C \in \mathrm{Pic}(X_C)$ has a non-trivial first Chern class $c_1(L_C)$ in $H^2(X_C, \mathbf{Z}_2(1))$, which follows by the argument in Example~\ref{Torsc1Ex}.
\end{remark}

We now turn to the proof of Theorem~\ref{ChernPrismCrysTors}. It will rely on the following, which is roughly a prismatic analog of Construction~\ref{SharpDef}:

\begin{construction}[The $\sharp$-functor from $A$-stacks to presheaves on $(A,I)_\Prism$]
For a presheaf $X$ on $(p,I)$-nilpotent $A$-algebras, write $X^\sharp$ for the presheaf on $(A,I)_\Prism$ given by $X^\sharp(B,IB) = X(B) := \mathrm{Map}_A(\mathrm{Spf}(B),X)$. If we write $\overline{X} = X \times_{\mathrm{Spf}(A)} \mathrm{Spf}(\overline{A})$ for the induced functor $p$-nilpotent $A$-algebras, then we have a natural
\[ \eta_X:X^\sharp \to (\overline{X}/A)^\Prism\]
induced via the obvious map $B \to \overline{B}$ for any $(B,IB) \in (A,I)_\Prism$.
\end{construction}

We shall analyse this construction using the arithmetic jet space from Definition~\ref{defJetspace}.

\begin{example}[$\mathrm{Spf}(R)^\sharp$ for smooth $R$]
\label{sharpcalcsmooth}
Let $R$ be a $(p,I)$-completely smooth $A$-algebra, and let $X=\mathrm{Spf}(R)$, regarded as a presheaf on $(p,I)$-nilpotent $A$-algebras. Then the presheaf $X^\sharp$ on $(A,I)_\Prism$ has an explicit description: it is representable, with representing object $(F,IF)$, where $F$ is the free $(p,I)$-complete $\delta$-$A$-algebra on $R$. In fact, this is immediate from the definition once we know that $(F,IF)$ is a bounded prism, i.e., $F$ is $I$-torsionfree and $F/I$ has bounded $p^\infty$-torsion. Both these properties would follow from the stronger assertion that $F$ is $(p,I)$-completely flat over $A$.  This assertion can be checked by devissage. Indeed,  it is clear when $R$ equals $R_0 := A[x_1,...,x_n]^{\wedge}_{(p,I)}$ via the explicit description of free $\delta$-rings, as in \cite[Lemma 2.11]{BhattScholzePrisms}. As $\delta$-structures lift uniquely along \'etale maps \cite[Lemma 2.18]{BhattScholzePrisms}, the assignment $S \mapsto F(S)$ forms a quasi-coherent sheaf for the \'etale topology on $\mathrm{Spf}(R_0)$ or $\mathrm{Spf}(R)$, so the claim follows in general as $\mathrm{Spf}(R)$ admits an open cover consisting of \'etale formal schemes over $\mathrm{Spf}(R_0)$. 
\end{example}

We need the following calculational property of this construction, analogous to Proposition~\ref{cohcohstack} and extending Example~\ref{sharpcalcsmooth}.

\begin{proposition}
\label{prismatization2}
Let $X$ be a presheaf on $(p,I)$-nilpotent $A$-algebras that admits an \'etale hypercover by $(p,I)$-completely smooth formal $A$-schemes. Then there is a natural isomorphism 
\[ R\Gamma(X^\sharp,\mathcal{O}) \simeq R\Gamma(JX, \mathcal{O})\]
of $E_\infty$-$A$-algebras.
\end{proposition}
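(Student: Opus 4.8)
The plan is to reduce to the case $X=\mathrm{Spf}(R)$ with $R$ a $(p,I)$-completely smooth $A$-algebra, where both sides are visibly the free $(p,I)$-complete $\delta$-$A$-algebra $F(R)$ on $R$, and then to propagate this identification along the given hypercover. I expect the only non-formal part to be the descent reduction; everything else should follow from Example~\ref{sharpcalcsmooth} and the description of jet spaces of affine formal schemes.

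For the reduction, I would first note that both functors $X\mapsto X^\sharp$ and $X\mapsto JX$ commute with colimits and finite limits, which is immediate from the defining formulas $X^\sharp(B,IB)=X(B)$ and $JX(R)=X(W(R))$ since evaluation of presheaves preserves $(\mathrm{co})$limits. Next, both carry an \'etale cover of an affine formal $A$-scheme to a flat cover --- of prisms in $(A,I)_\Prism$ in the first case, of affine formal $A$-schemes in the second: if $\{\mathrm{Spf}(R_j)\to\mathrm{Spf}(R)\}$ is such a cover, then $F(R_j)\simeq R_j\,\widehat{\otimes}_R\,F(R)$ because $\delta$-structures lift uniquely along \'etale maps \cite[Lemma 2.18]{BhattScholzePrisms} (this is the point of Example~\ref{sharpcalcsmooth}), and the resulting maps $(F(R),IF(R))\to (F(R_j),IF(R_j))$, resp.\ $\mathrm{Spf}(F(R_j))\to\mathrm{Spf}(F(R))$, form a flat cover by base change. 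Since $\mathcal{O}$ satisfies flat descent --- on $(A,I)_\Prism$ and on affine formal $A$-schemes alike --- it follows that $X\mapsto R\Gamma(X^\sharp,\mathcal{O})$ and $X\mapsto R\Gamma(JX,\mathcal{O})$ each factor through \'etale sheafification, hence are computed by any \'etale hypercover. Applying this to the given hypercover $U^\bullet\to X$ --- which, after a refinement, we may take to consist of disjoint unions of affine $(p,I)$-completely smooth formal $A$-schemes --- and using that $R\Gamma(-,\mathcal{O})$ carries colimits of presheaves (in particular disjoint unions) to limits (products), both sides of the proposition become $\lim_{[n]\in\Delta}$ of their values on the $U^n$. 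So it suffices to produce a natural equivalence $R\Gamma(\mathrm{Spf}(R)^\sharp,\mathcal{O})\simeq R\Gamma(J\mathrm{Spf}(R),\mathcal{O})$ for $R$ a $(p,I)$-completely smooth $A$-algebra.

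For such $R$, Example~\ref{sharpcalcsmooth} identifies $\mathrm{Spf}(R)^\sharp$ with the presheaf represented by the bounded prism $(F(R),IF(R))$, where $F(R)$ is the free $(p,I)$-complete $\delta$-$A$-algebra on $R$; as the category of points of a representable presheaf has an initial object, $R\Gamma(\mathrm{Spf}(R)^\sharp,\mathcal{O})=F(R)$. On the other hand, the arithmetic jet space of the affine formal scheme $\mathrm{Spf}(R)$ is $\mathrm{Spf}(F(R))$ --- with the same left adjoint $F$ to the forgetful functor on $(p,I)$-complete $\delta$-$A$-algebras, cf.\ \cite[Proposition A.20]{BhattLuriePrismatization} --- so $R\Gamma(J\mathrm{Spf}(R),\mathcal{O})=F(R)$ as well. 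Both identifications are manifestly functorial in $R$ and compatible with disjoint unions, so they glue along $U^\bullet$ into the desired natural equivalence of $E_\infty$-$A$-algebras. The step I expect to require genuine care is the descent reduction in the previous paragraph --- that $R\Gamma((-)^\sharp,\mathcal{O})$ and $R\Gamma(J(-),\mathcal{O})$ kill \'etale-local equivalences of presheaves --- which rests precisely on the unique lifting of $\delta$-structures along \'etale maps together with flat descent for the structure sheaves; once that is in place, the proposition is formal.
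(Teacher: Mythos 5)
Your argument is essentially the paper's: the affine case is Example~\ref{sharpcalcsmooth} together with $J\mathrm{Spf}(R)=\mathrm{Spf}(F(R))$, and the reduction to it is étale (hyper)descent for $R\Gamma((-)^\sharp,\mathcal{O})$ and $R\Gamma(J(-),\mathcal{O})$, resting on preservation of finite limits and of étale covers via the unique lifting of $\delta$-structures along $(p,I)$-completely étale maps. One small caveat: your claim that $(-)^\sharp$ and $J(-)$ commute with all presheaf colimits is not justified as stated (evaluation at the complete rings $B$, resp.\ $W(R)$, is a limit of evaluations at nilpotent quotients), but it is also not needed, since the étale-sheaf property you establish is exactly what lets the hypercover compute both sides, as in the paper.
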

\begin{proof}
When $X$ is representable by a $(p,I)$-completely smooth formal affine $A$-scheme, the claim follows from Example~\ref{sharpcalcsmooth}. To deduce the general case, we argue via descent. First, observe that the functor $X \mapsto X^\sharp$ from $(p,I)$-completely smooth formal affine $A$-schemes to presheaves on $(p,I)$-nilpotent $A$-algebras clearly preserves finite limits. Moreover, it also preserves \'etale maps/covers by the existence and uniqueness of lifts of $\delta$-structures along $(p,I)$-completely \'etale maps. Thus, $F(-) := R\Gamma( (-)^\sharp,\mathcal{O})$ is an \'etale sheaf. Similarly, as the formation of $JX$ is compatible with \'etale localization on $X$, the functor $G(-) := R\Gamma(J(-), \mathcal{O})$ is also an \'etale sheaf. As we have already constructed a natural isomorphism $F(X) \simeq G(X)$ for $(p,I)$-completely smooth formal affine $A$-schemes, the claim follows for general $X$ as in the statement of the proposition.
\end{proof}


Let us put everything together:

\begin{proof}[Proof of Theorem~\ref{ChernPrismCrysTors}]

We first explain the proof when $E$ is a Hodge--Tate crystal of vector bundles. Without loss of generality, we may assume $E$ has a fixed rank, say $r$. Write $Y=\mathrm{Vect}_r(-/A)$ and $\overline{Y} = Y_{\overline{A}} = \mathrm{Vect}_r(-/\overline{A})$ for the displayed stacks over $\mathrm{Spf}(A)$ and $\mathrm{Spf}(\overline{A})$ respectively.  The existence of $E$ gives a natural map 
\[ (X/A)^\Prism \to (\overline{Y}/A)^\Prism \]
of presheaves on $(A,I)_\Prism$ by the following procedure: given $(B,IB) \in (A,I)_\Prism$ and a map $\mathrm{Spf}(\overline{B}) \to X$ determining a point of $(X/A)^\Prism(B,IB)$, we obtain a vector bundle $E(\overline{B}) \in \mathrm{Vect}_r(B)$ by evaluating the Hodge--Tate crystal $E$, and thus a point of $(\overline{Y}/A)^\Prism(B,IB)$. Specifying a lift of $E$ to a prismatic crystal in vector bundles is tantamount to factoring the previous map as 
\[ (X/A)^\Prism \to Y^\sharp \xrightarrow{\eta_Y}  (\overline{Y}/A)^\Prism. \]
It is therefore enough to show that the image of the universal Chern class $c_i \in H^{2i}_\Prism(\overline{Y}/A; \mathbf{Z}/p^n)\{i\}$ under $\eta_Y^*$ is killed by $w(\mathbf{Z}_p,i)$ for $n \geq 0$. Using Proposition~\ref{prismatization2} to realize $R\Gamma(Y^\sharp,\mathcal{O}/p^n) \simeq R\Gamma(JY, \mathcal{O}/p^n)$, the claim follows from Theorem~\ref{torsionboundjet}. The general case is deduced using Thomason's theorem, as in Theorem~\ref{CrysChernVanish}.
\end{proof}

\begin{remark}[Other results on torsion bounds for Chern classes]
\label{RelatedTorsion}
Fix a complete and algebraically closed extension $C/\mathbf{Q}_p$, a smooth proper variety $X_C$ admitting a smooth proper integral model $X/\mathcal{O}_C$, and a vector bundle $E \in \mathrm{Vect}(X_C)$. We then have Chern classes $c_i(E) \in H^{2i}(X_C, \mathbf{Z}_p(i))$ in \'etale cohomology.  Theorem~\ref{ChernPrismCrysTors} (and Remark~\ref{BoundedTors}) shows that
\[ w(\mathbf{Z}_p,i) \cdot c_i(E) = 0 \in H^{2i}(X_C, \mathbf{Z}_p(i)) \]
provided $E$ admits a certain additional sturcture (namely, an extension to a perfect complex on $X$ that lifts to a prismatic crystal). Under different constraints on $E$, this vanishing result has some antecedents in the literature. Two instances are:
\begin{enumerate}
\item When $E$ is a Gauss--Manin connection for a semistable fibration over $X_C$ and $c_i^{\mathrm{CH}}(E) \in \mathrm{CH}^i(X_C)$ denotes the $i$-th Chern class in the Chow group of codimension $i$ cycles, Maillot--R\"{o}ssler \cite{MaillotRosslerChern} conjecture  $w(\mathbf{Z}_p,i) \cdot c^{\mathrm{CH}}_i(E) = 0$ in $\mathrm{CH}^i(X_C) \otimes_{\mathbf{Z}} \mathbf{Z}_{(p)}$;  they also prove the corresponding assertion in cohomology (attributing the case of smooth fibrations to Grothendieck).  Prior to this work, in light of Reznikov's solution \cite{ReznikovBloch} to Bloch's conjecture, Esnault had already formulated (see \cite{EsnaultFlatCC} and \cite{EVChernWeight1}) the weaker question that $c_i(E) = 0$ in $\mathrm{CH}^i(X) \otimes_{\mathbf{Z}} \mathbf{Q}$, and in fact integers closely related to $w(\mathbf{Z}_p,i)$ also appear in the proof in \cite[\S 2]{EVChernWeight1} (where they arise  via a Chern character calculation).

\item Say $G$ is a finite group and $X=BG_{\mathcal{O}_C}$. Then $E \in \mathrm{Vect}(X_C)$ corresponds to a $C$-linear representation of $G$; note that any such representation has an $\mathcal{O}_C$-stable lattice, so $E$ admits a lift to $\mathrm{Vect}(X)$. Grothendieck proved \cite[Theorem 4.8]{GrothendieckChernDiscrete} that $n(E,i) \cdot c_i(E) = 0$ in $H^{2i}(X_C, \mathbf{Z}_p(i))$ where $n(E,i)$ is an explicit integer depending on the ramification needed to define $E$ (as a representation of $G$ on a $C$-vector space) and the integer $i$. In the special case where $E$ can be defined over a base field unramified over $\mathbf{Q}_p$, the bundle $E$ does indeed lift to a prismatic crystal on $X$ and we also have $n(E,i) = w(\mathbf{Z}_p,i)$ (up to factors coprime to $p$), so Theorem~\ref{ChernPrismCrysTors} is compatible with Grothendieck's theorem in this case. 

For the same $X$ as above, the following example illustrates the use of Theorem~\ref{ChernPrismCrysTors} in concluding that not all vector bundles on $X_C$ lift to prismatic crystals (this particular example also admits an elementary proof, but it illustrates the obstruction):

\begin{example}
\label{NoPrismaticLift}
Say $p=3$, $G=\mathbf{Z}/3$ and $X=BG_{\mathcal{O}_C}$ as above. Since $\underline{\mathrm{Hom}}(G,\mathbf{G}_m) = \mu_3$ as group schemes over any base, we have $\mathrm{Pic}(X) \simeq \mathrm{Pic}(X_C) \simeq \mu_3(C)$. Pick $L \in \mathrm{Pic}(X)$ giving a nonzero element of this group.  Note that $L_C$ admits the structure of an infinitesimal crystal on $X_C$: in fact, every bundle on $X_C$ admits such a structure as $X_C$ is \'etale over $C$. Moreover, the argument in Example~\ref{Torsc1Ex} shows that $c_1(L_C) \in H^2(X_C, \mathbf{Z}_3(1))$ is nonzero, whence $c_1(L) \in H^2_\Prism(X/A)\{1\}$ is also nonzero. Since $w(\mathbf{Z}_3,1) = 1$, it follows from Theorem~\ref{ChernPrismCrysTors} that $L$ cannot lift to a ($\varphi$-twisted) prismatic crystal on $X$. Using a suitable Godeaux--Serre approximation, one can also find a similar example where $X/\mathcal{O}_C$ is a smooth projective variety instead of a stack.
\end{example}
\end{enumerate}
\end{remark}

\begin{remark}[A question on vanishing in syntomic cohomology]
\label{SynMixedQuestion}
Based on the Maillot--Rossler(--Esnault) conjecture in Remark~\ref{RelatedTorsion} (1) and the fact that syntomic cohomology can be regarded as $p$-adic \'etale motivic cohomology, the analogy of prismatic $F$-gauges with $p$-adic motives suggests the following: 
\begin{question}
 Say $X/\mathbf{Z}_p$ is a proper regular $p$-adic formal scheme and $E \in \mathrm{Perf}(X)$ is a perfect complex on $X$ admitting the structure of a prismatic $F$-gauge. Is $w(\mathbf{Z}_p,i) \cdot c_i(E) = 0$ in $H^{2i}_{\mathrm{Syn}}(X, \mathbf{Z}_p(i))$? An example of such an $E$ is provided by a vector bundle on $X$ that admits the structure of a prismatic $F$-crystal.
\end{question}
For $X$ having characteristic $p$, such a vanishing indeed holds true: in fact, the syntomic Chern classes themselves vanish without the $w(\mathbf{Z}_p,i)$ factor, see Remark~\ref{SynVanCharpRemark} (2). On the other hand, a variant of Example~\ref{Chernexnonzero} shows that the factors $w(\mathbf{Z}_p,i)$ are necessary in general.

As in Remark~\ref{SynVanCharpRemark}, one might more ambitiously also ask the above question without the $F$-structure, i.e., for all (absolute) prismatic crystals.
\end{remark}

\subsection{Vanishing for $\varphi$-twisted prismatic crystals}
\label{TwistedPrismaticCrystals}
We have motivated  Theorem~\ref{ChernPrismCrysTors} as a prismatic counterpart of Theorem~\ref{CrysChernVanish}, but the formulations are not obviously related: besides the factors of $w(\mathbf{Z}_p,i)$ appearing in the former (which are unavoidable by Example~\ref{Chernexnonzero}), the former concerns Hodge--Tate crystals on $X$ admitting additional structure, while the latter concerns perfect complexes on $X$ admitting additional structure. Nonetheless, one can extract a result from Theorem~\ref{ChernPrismCrysTors} that looks formally quite similar to Theorem~\ref{CrysChernVanish}; we explain how do so in the rest of this remark, using the language of derived prismatization. (Thus, in this subsection, $(X/A)^\Prism$ refers to a presheaf on all $(p,I)$-nilpotent animated $A$-algebras, as constructed in \cite{BhattLuriePrismatization}; this is compatible with the rest of this section, see Remark~\ref{CompatPrismatization}.)

\begin{construction}[Factoring the Frobenius on the Hodge--Tate stack]
Let $X$ be a presheaf on $(p,I)$-nilpotent animated $A$-algebras. We shall construct a map allowing us to extract perfect complexes on $X$ from prismatic data. More precisely, we shall construct the following commutative diagram of presheaves on $(p,I)$-nilpotent animated $A$-algebras:
\begin{equation}
\label{PrismCrysTwist}
 \xymatrix{ (X/A)^{HT} \ar[r]^{\pi} \ar[d]^{\tau} & X \ar[d]^\psi \\
(X/A)^\Prism \ar[r]^{\varphi_{X/A}}  & \varphi_A^* (X/A)^\Prism. }
\end{equation}
Here $\tau$ is the tautological closed immersion, $\pi$ is the Hodge--Tate structure map,  and  $\varphi_{X/A}$ is the relative Frobenius for the $\delta$-$A$-prehseaf $(X/A)^\Prism$. To define $\psi$ and the witness to the commutativity of the resulting diagram, we use the following observation: for any $p$-nilpotent animated $A/I$-algebra $R$, if we regard $W(R)$ as an animated $\delta$-$A$-algebra in the standard way, then the animated Cartier--Witt divisor $(I \to A) \otimes_A \varphi_* W(R)$ identifies with $\left(\varphi_* W(R) \xrightarrow{p} \varphi_* W(R)\right)$, whence the mod $I$ reduction of the Frobenius $W(R) \xrightarrow{F} \varphi_* W(R)$ factors as $W(R)/I \to R \xrightarrow{\epsilon_R} \varphi_*  W(R)/p$, where the first map is the canonical projection, while the second map is induced by the Witt vector Frobenius (and already featured in Remark~\ref{stackycrysvan}). Using the functor of points, this factorization constructs the desired maps as well a witness to the commutativity of the diagram. 
\end{construction}

\begin{definition}[$\varphi$-twisted prismatic crystals]
Given a presheaf $X$ on $(p,I)$-nilpotent $A$-algebras, the $\infty$-category of {\em $\varphi$-twisted prismatic crystals} is defined to be $\mathrm{Perf}(\varphi_A^* (X/A)^\Prism)$.  Any  $\varphi$-twisted prismatic crystal has a {\em value on $X$}, which is the perfect complex on $X$ via pullback along $\psi$ in diagram \eqref{PrismCrysTwist}. 
\end{definition}

Let us explain the relation to crystals on the classical crystalline site.

\begin{example}[The crystalline case]
Say $(A,I)$ is a crystalline prism, i.e., $I = (p)$. For any presheaf $X$ on $p$-nilpotent $A$-algebras, we can identify $\varphi$-twisted prismatic crystals with crystals of perfect complexes on $(X/A)_\crys$. In fact, the stack $\varphi_A^* (X/A)^\Prism$ identifies with the crystallization $(X/A)^\crys$ appearing in the stacky approach to crystalline cohomology \cite{DrinfeldStacky}: via transmutation, this follows from the isomorphism of rings stacks 
\[ \left((p) \to A\right) \otimes_A \varphi_* W(\mathbf{G}_a) = \left(\varphi_* W(\mathbf{G}_a) \xrightarrow{p} \varphi_* W(\mathbf{G}_a)\right) =: \mathbf{G}_a^{dR}\]
on $p$-nilpotent $A$-algebras (see \cite[Remark 7.9]{BhattLuriePrismatization} for more). Under this equivalence, the $\psi$-pullback operation on $\varphi$-twisted prismatic crystals identifies with the evaluation at $X$ operation on crystals on $(X/A)_\crys$. 
\end{example}

Thus, over a general bounded prism, a $\varphi$-twisted prismatic crystal on $(X/A)_\Prism$ is a generalization of  the classical notion of a crystal of perfect complexes in the crystalline theory.  We then have:

\begin{corollary}
\label{TwistedPrismaticCrystalsCor}
Fix a presheaf $X$ on $p$-nilpotent $A/I$-algebras as a perfect complex $E \in \mathrm{Perf}(X)$. If $E$ lifts to a $\varphi$-twisted prismatic crystal 
\[ w(\mathbf{Z}_p,i) c_i(E) = 0 \quad \text{in} \quad  H^{2i}_\Prism(X/A; \mathbf{Z}/p^n)\{i\}\]
for all $n \geq 0$ and $i > 0$. 

If we assume that each $H^i_\Prism(X/A)$ has bounded $p^\infty$-torsion, then $c_i(E) \in H^{2i}_\Prism(X/A)\{i\}$ is annihilated by $w(\mathbf{Z}_p,i)$ for each $i > 0$. 
\end{corollary}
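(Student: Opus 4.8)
The plan is to reduce directly to Theorem~\ref{ChernPrismCrysTors} by chasing the diagram \eqref{PrismCrysTwist}. Suppose $E \in \mathrm{Perf}(X)$ lifts to a $\varphi$-twisted prismatic crystal, i.e. there is $\widetilde{E} \in \mathrm{Perf}(\varphi_A^*(X/A)^\Prism)$ with $\psi^*\widetilde{E} \simeq E$. The first step is to manufacture from $\widetilde{E}$ a genuine Hodge--Tate crystal on $X/A$ that lifts to a prismatic crystal: set $F := \varphi_{X/A}^*\widetilde{E} \in \mathrm{Perf}((X/A)^\Prism,\mathcal{O})$, a prismatic crystal, and restrict it along the tautological closed immersion $\tau:(X/A)^{HT} \to (X/A)^\Prism$ to obtain the Hodge--Tate crystal $\tau^*F \in \mathrm{Perf}((X/A)^\Prism,\overline{\mathcal{O}})$, which by construction lifts $F$.

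Next I would identify $\tau^*F$ with the image of $E$ under the pullback $\mathrm{Perf}(X) \to \mathrm{Perf}((X/A)^\Prism,\overline{\mathcal{O}})$ from Remark~\ref{PerftoHT}, i.e. with $\pi^*E$, where $\pi:(X/A)^{HT} \to X$ is the Hodge--Tate structure map. Indeed, commutativity of \eqref{PrismCrysTwist} gives $\psi\circ\pi = \varphi_{X/A}\circ\tau$, hence
\[ \tau^*F = \tau^*\varphi_{X/A}^*\widetilde{E} = \pi^*\psi^*\widetilde{E} = \pi^*E. \]
By the factorization of the prismatic Chern class construction through the Hodge--Tate stack recorded in Construction~\ref{PrismChernClassDef} (see footnote~\ref{ChernHTfactor} there), the class $c_i(E) \in H^{2i}_\Prism(X/A)\{i\}$ is precisely the $i$-th Chern class of the Hodge--Tate crystal $\pi^*E = \tau^*F$. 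Since $\tau^*F$ lifts to the prismatic crystal $F$, Theorem~\ref{ChernPrismCrysTors} applies verbatim (using Remark~\ref{CompatPrismatization} to match the derived and non-derived versions of the prismatization functor), giving $w(\mathbf{Z}_p,i)\,c_i(E) = 0$ in $H^{2i}_\Prism(X/A;\mathbf{Z}/p^n)\{i\}$ for all $n \geq 0$ and $i > 0$; the integral statement under the bounded $p^\infty$-torsion hypothesis on each $H^j_\Prism(X/A)$ then follows from the second assertion of Theorem~\ref{ChernPrismCrysTors} applied to this same $\pi^*E$ (only boundedness of $H^{2i-1}_\Prism(X/A)$ being needed).

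There is no substantial obstacle here: essentially all the work already resides in Theorem~\ref{ChernPrismCrysTors} and in the compatibilities established above. The one point requiring care, and where the homotopy-coherence data built into \eqref{PrismCrysTwist} actually gets used, is the verification that $\tau^*\varphi_{X/A}^*\widetilde{E}$ genuinely recovers the Hodge--Tate crystal attached to $E \in \mathrm{Perf}(X)$ (rather than merely \emph{some} lift of the underlying complex); this diagram chase is the crux. An alternative route, avoiding a direct appeal to the Chern-class factorization of Construction~\ref{PrismChernClassDef}, is to argue at the level of $K$-theory exactly as in Construction~\ref{conscryschernperf} and Theorem~\ref{CrysChernVanish}: compose the classifying map of $E$ with the universal Chern classes and run the same diagram chase on the relevant classifying stacks; we expect to present the proof in this $K$-theoretic form for uniformity with the crystalline case.
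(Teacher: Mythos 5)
Your proposal is correct and follows essentially the same route as the paper: use the commutativity of diagram \eqref{PrismCrysTwist} to see that $\pi^*E \simeq \tau^*\varphi_{X/A}^*\widetilde{E}$ lifts to the prismatic crystal $\varphi_{X/A}^*\widetilde{E}$, then invoke Footnote~\ref{ChernHTfactor} to identify $c_i(E)$ with $c_i(\pi^*E)$ and apply Theorem~\ref{ChernPrismCrysTors}. Your version merely spells out the intermediate objects more explicitly than the paper's one-line argument, so no further changes are needed.
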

\begin{proof}
This follows immediately Theorem~\ref{ChernPrismCrysTors} and the commutative square \eqref{PrismCrysTwist}: if $E$ lifts to a $\varphi$-twisted prismatic crystal (i.e., along $\psi^*$), then the diagram shows that the corresponding Hodge--Tate crystal $\pi^* E$ lifts to a prismatic crystal (i.e., along $\tau^*$), whence $c_i(E) := c_i(\pi^* E)$  (by Footnote~\ref{ChernHTfactor}) satisfies the desired conclusion. 
\end{proof}

In other words, up to the unavoidable factor $w(\mathbf{Z}_p,i)$, we obtain a prismatic counterpart to Theorem~\ref{CrysChernVanish}.

\newpage
\bibliographystyle{amsalpha}
\bibliography{mybib}

\end{document}